\newtheorem{thm}{Theorem}[section]
\newtheorem{rem}[thm]{Remark}
\newtheorem{lemma}[thm]{Lemma}
\newcommand{\p}{\partial}
\newcommand{\De}{\Delta}
\newcommand{\R}{\mathbb{R}}
\newcommand{\Z}{\mathbb{Z}}
\newcommand{\h}{\mathcal{H}}
\newcommand{\lp}{\mathfrak{l}}
\newcommand{\kk}{\mathfrak{k}}
\newcommand{\N}{\mathbb{N}}
\numberwithin{equation}{section}
\subjclass[2020]{42B10, 42B20, 42B25}
\keywords{$L^p$ estimates, Hilbert transform, maximal operator}
\begin{document}
\title[Hilbert transform and maximal operator]{$L^p$ estimates for Hilbert transform and maximal operator  associated to   variable polynomial}

\author[R. Wan]{ Renhui Wan}

\address{School of Mathematical Sciences, Nanjing Normal University, Nanjing 210023, People's Republic of China}

\email{rhwanmath@163.com}

\vskip .2in
\begin{abstract}
We investigate the  Hilbert transform and the maximal operator  along a  class of variable non-flat polynomial curves $(P(t),u(x)t)$ with measurable $u(x)$,  and prove uniform $L^p$ estimates  for $1<p<\infty$.  In particular,  via the change of variable, these uniform estimates are equal to  the  ones  for the curves  $(P(v(x)t),t)$ with  measurable $v(x)$. To obtain the desired bound, we make full use of time-frequency techniques and establish a crucial $\epsilon$-improving estimate  for some special separate sets.

\end{abstract}

\maketitle
\section{Introduction}
\label{s1}
In the present paper, we investigate the  Hilbert transform and the maximal operator  along  variable  non-flat polynomial curves in $\R^2$ described  by
\begin{equation}\label{curve}
\Gamma_x(t):=(P(t),u(x)t),\ \ {\rm where}\ \ P(t)=\sum_{i=2}^Na_it^i,\ \{a_i\}_{i=2}^N\subset \R,
\end{equation}
where the function $u(x):\R\mapsto \R$  is measurable,
and obtain  the uniform $L^p$ estimates for $1<p<\infty$. More precisely,  we study 
the Hilbert transform along $\Gamma_x(t)$ defined by
\begin{equation}\label{hhh}
\h^\Gamma f(x,y):=p.v.\int_\R f((x,y)-\Gamma_x(t))\frac{dt}{t}
\end{equation}
and 
the maximal operator along $\Gamma_x(t)$  given by
\begin{equation}\label{mmm}
M^\Gamma f(x,y):=\sup_{\epsilon>0}\frac{1}{2\epsilon}
\int_{-\epsilon}^\epsilon |f((x,y)-\Gamma_x(t))|dt.
\end{equation}
\vskip.1in
We state our main result as follows:
\begin{thm}\label{t1}
Let $u(x)$  and
$\Gamma_x(t)$ be given by (\ref{curve}).  Then $\h^\Gamma $ and  $M^\Gamma $ defined as in (\ref{hhh}) and (\ref{mmm}) can be extended to two bounded operators from $L^p(\R^2)$ to $L^p(\R^2)$
for $1<p<\infty$.  In addition, the bounds are uniform in the  sense of  that  they  depend only on the degree $N$.
\end{thm}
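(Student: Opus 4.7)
The plan is to reduce Theorem 1.1 to a uniformly bounded family of single-scale operators via dyadic decomposition, and then to combine them using an $\epsilon$-improving estimate that is the heart of the argument. First I follow the change-of-variable trick indicated in the abstract: the substitution $s = u(x)t$ turns $\h^\Gamma$ into the analogous operator along $(P(v(x)t), t)$ with $v(x) = 1/u(x)$ measurable, where now the $y$-translation no longer depends on $x$. I then split $1/t = \sum_{k\in\Z}\rho_k(t)$ with $\rho_k$ a smooth bump on $|t|\sim 2^k$ and decompose $\h^\Gamma = \sum_k H_k$. Rescaling $t\to 2^k t$ in each piece, and correspondingly dilating $y$, reduces $H_k$ to a single-scale operator along the curve $(P(2^k v(x)s), s)$ with $|s|\sim 1$. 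Since $2^k v(\cdot)$ is another arbitrary measurable function and the rescaled coefficients $a_i (2^k)^i$ still define a degree-$N$ polynomial, it suffices to prove a uniform single-scale estimate for this class, plus an orthogonality mechanism across scales.

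For the single-scale piece I take the partial Fourier transform in $y$ and decompose the dual variable dyadically, $|\eta|\sim 2^j$. This splits the $y$-phase $e^{-is\eta}$ into two regimes. When $|2^k\eta|$ is not large (roughly $k+j \lesssim 0$ in the pre-rescaling variables), the oscillation in $y$ is mild and the operator essentially reduces to a Hilbert or maximal operator along the polynomial curve $x\mapsto x - P(v(x)s)$ in $x$, which can be handled by Stein--Wainger-type arguments applied on dyadic level sets of $v$. When $|2^k\eta|$ is large, van der Corput in $s$ combined with a further $\xi$-frequency decomposition produces an $L^2$ decay $2^{-\delta(k+j)}$ that is uniform in $v$ because $v$ enters only as a multiplicative dilation in the phase. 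The crucial $\epsilon$-improving estimate alluded to in the abstract interpolates between this $L^2$ bound and the trivial $L^\infty$ bound on time-frequency tiles adapted to the shear generated by $v$ and $P$, yielding an $L^p$ gain with geometric decay in the tile index. Summing the decay over dyadically separated sets of scales and levels then produces the uniform $L^p$ estimate for $1<p<\infty$.

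The main obstacle is that $v$ is only measurable, so no derivative-type argument is available; the $\epsilon$-improving bound must hold for every measurable $v$, and its gain has to dominate the entropy cost of linearizing $v$ on level sets $\{v(x)\in 2^\ell I\}$. This is where time-frequency analysis in the spirit of Bateman--Thiele and Lacey--Li becomes indispensable: one projects the input onto well-separated tiles on which $v$ is essentially constant, proves a single-tile bound via van der Corput, and then recombines tiles through an almost-orthogonality argument that exploits the polynomial structure of $P$ to rule out resonances between scales. Once the Hilbert transform is under control, the maximal operator $M^\Gamma$ follows from the same decomposition, with $1/t$ replaced by a positive bump and the supremum over $\epsilon$ reduced to a single-scale maximal inequality controlled by a square function built from the $H_k$ pieces. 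Throughout, the bound depends on $u$ and on the coefficients $\{a_i\}$ only through the degree $N$, because all estimates are uniform in the rescaled polynomial and in the linearized measurable function, giving the uniform claim of Theorem 1.1.
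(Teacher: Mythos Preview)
Your proposal sketches the right landscape but contains a genuine gap at the decisive step, and the overall architecture does not match what is actually needed.

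First, a structural issue. After the dyadic decomposition $\h^\Gamma=\sum_k H_k$ and rescaling, you assert that it suffices to prove a uniform single-scale bound ``plus an orthogonality mechanism across scales.'' But a uniform bound $\|H_k\|_{p\to p}\lesssim_N 1$ by itself gives nothing summable in $k$, and there is no Littlewood--Paley orthogonality available in $k$ because the operator does not commute with frequency projections in $x$ (the $x$-shift is $P(2^{-k}t)$, which mixes all $x$-frequencies). The paper resolves this not by orthogonality in $k$ but by a joint decomposition in $(m,n)$ of the sizes of $\xi 2^{-jl}$ and $\eta u(x)2^{-j}$, splitting $\Z^2$ into four regions $\Delta_1,\dots,\Delta_4$; in $\Delta_1$--$\Delta_3$ the sum over $j$ is absorbed either by Taylor expansion or by integration by parts exploiting $|m-n|\gg 1$, while in $\Delta_4$ one proves decay $2^{-\epsilon m}$ in the \emph{frequency height} $m$, not in the scale $k$. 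Your outline does not produce a parameter in which there is summable decay.

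Second, and more seriously, your description of the $\epsilon$-improving estimate is not the estimate that actually works here. You describe it as interpolation between an $L^2$ van der Corput bound and a trivial $L^\infty$ tile bound, in the style of Bateman--Thiele or Lacey--Li. Those arguments are tailored to the zero-curvature (linear) setting and do not transfer: after stationary phase the phase $\phi_{j,\xi,\eta,x}(t_{cx})$ depends on $u(x)$ through the critical point, and a simple van der Corput in $s$ does \emph{not} give decay uniform in the measurable function, because $u(x)$ does not enter as a pure multiplicative dilation once $P$ has more than one monomial. The paper's key input (Lemmas~\ref{l31} and~\ref{p1}) is of a completely different nature: it is a rigidity statement saying that if $\sum a_k(x)F(x+l)^{\alpha_k}$ is close to a constant for many well-separated values of $l$, then the coefficients $a_k(x)$ must be small. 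This is used, after discretizing the phase and running a $TT^\star$ argument, to show that the ``bad'' set of discretized parameters $(l,w)$ for which many $x$ resonate has cardinality $\lesssim 2^{(1/2-\mu)m}$, which is what produces the $2^{-\epsilon m}$ decay in Lemma~\ref{ll2}. Nothing in your proposal supplies this counting estimate, and without it the high-frequency piece cannot be summed.
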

\begin{rem}\label{r1}
The curve $\Gamma_x(t)$ in (\ref{curve}) can be generalized to 
$(\sum_{i=1}^N a_i [t]^{\beta_i}$, $u(x)[t]^\alpha)$, where $\alpha>0$,
 $a_i\in\R,\ \beta_i\neq \alpha$ and $\beta_i>0.$ Here $[t]^\sigma=|t|^\sigma$ or $sgn(t)|t|^\sigma$.
 In order not to  clutter the presentation, we do not
choose  to pursue on this direction here.
Furthermore, 
the arguments in the proof of Theorem \ref{t1}  also work  for the Carleson type operator 
$p.v.\int f(x-P(t))e^{iu(x) t}\frac{dt}{t}$,  see   section \ref{me} for its discussion.
\end{rem}
\subsection{Historical background}
 Replacing $\Gamma_x (t)$   in (\ref{curve}) by $(t,u(x,y)t)$  and the  domain of integration in (\ref{hhh}) by  $[-\epsilon_0,\epsilon_0]$, we reduce the operators $M^\Gamma$ and $\h^\Gamma$   to the $\epsilon_0$-truncated operators $M^\Gamma_{\epsilon_0}$ and $\h^\Gamma_{\epsilon_0}$, which are
  the  objects investigated in the so-called Zygmund conjecture and Stein conjecture. For these conjectures, the  zero-curvature case as well as the function $u(x,y)$,  which   depends on the double  variables and is assumed only in the $Lip$ space,   make them so difficult that both are open so far. Nonetheless, there are various variants of them.  Here we list some partial progresses connected with the present work as follows.
  \vskip.1in
  \underline{The zero-curvature case}\ \ \
 Bourgain  \cite{B89} proved  that $M^\Gamma_{\epsilon_0}$ with any real analytic  $u=u(x,y)$ is $L^2$ bounded.  The analogous  result  for  $\h^\Gamma_{\epsilon_0}$ was proved  later by Stein and Street  \cite{SS12},  whose 
   objects are all polynomials with analytic coefficients. For smooth $u=u(x,y)$ satisfying  certain curvature condition, Christ et al. \cite{CNSW99} demonstrated that  both $\h^\Gamma$ and $M^\Gamma$ are $L^p$ bounded.
 Later,    Lacey and Li \cite{LL10} obtained   by a sophisticated  time-frequency approach  exploited by them  in \cite{LL06}  that $\h^\Gamma_{\epsilon_0}$ with $u\in \mathcal{C}^{1+\epsilon}$ 
  is bounded on $L^2$.  
   For any measurable  $u=u(x)$,
  Bateman \cite{B13} and Bateman-Thiele \cite{BT} proved that $\h^\Gamma $ is $L^p$ bounded   for $p>3/2$ and   $\h^\Gamma P_k^{(2)}$ is bounded on $L^p$ for $p>1$, where $P_k^{(2)}$ denotes the Littlewood-Paley projection in the $y$-variable and  the commutation relation $\h^\Gamma P_k^{(2)}=P_k^{(2)}\h^\Gamma $ is  crucial in their proofs.
  Recently, partially motivated by Bateman'work \cite{B13},
  via establishing a crucial $L^p$ estimate of certain commutator,
  Guo \cite{G17} proved  $\h^\Gamma$ is $L^p$ bounded for $p>3/2$ under the assumption that   $u=u(h(x,y))$ with  sufficiently small $\|\nabla h -(1,0)\|_\infty$.    If the measurable  $u=u(x,y)$ does not own any regularity,  $\h^\Gamma$ along  the curve $(t,u(x,y)t)$ is not bounded on  $L^p$,  see \cite{K07,LMP19}.
  \vskip.1in
   \underline{The non-zero  curvature case}\ \ \
   This  problem is not only a non-trivial generalization of the zero curvature but also closely related to the Carleson maximal operators. Here and in what follows   $P_x(t)$ ($P_y(t),P_{x,y}(t)$) denotes the polynomial with the coefficients depending on the $x$-variable ($y$-variable, both $x$-variable and $y$-variable).
   For $\Gamma_x(t)=(t,u(x,y)[t]^\alpha)$ $(0<\alpha\neq 1)$ with  measurable function $u(x,y)$, Marletta and Ricci \cite{MR98} obtained $M^\Gamma$ is $L^p$ bounded for  $p>2$.
    For  $u(x,y)\in {\rm Lip}(\R^2)$,  Guo et al. \cite{GHLJ}  proved   $M^\Gamma_{\epsilon_0}$ with certain $\epsilon_0=\epsilon_0(\|u\|_{\rm Lip})$ is bounded on $L^p$ for $p\in(1,2]$.
   Furthermore, Guo et al.  proved 
   that $\h^\Gamma P_k^{(2)}$ with measurable $u=u(x,y)$ is $L^p$ bounded for $p\in (2,\infty)$. 
   Under the assumption that  $\|u\|_{\rm Lip}$ is small enough, Di Plinio et al. \cite{DGTZ} obtained $\h^\Gamma_{1}$ is bounded on $L^p$ for $p\in (1,\infty)$.
   Very recently, Liu-Song-Yu \cite{LSY21} and Liu-Yu \cite{LY22} used local smoothing estimates for Fourier integral operators  to extend \cite{MR98,GHLJ,DGTZ} to a larger class of curves $(t,u(x,y)\gamma(t))$, where  $\gamma(t)$
   is even or odd.  However, it seems difficult to use the ideas in  the above works to accomplish  the uniform estimate for the variable curve $(t,P_{x}(t))$ with  $P_{x}'(0)=0$ and measurable coefficients, where the uniform estimate is in the  sense of  that its
    bound depends only on the degree $N$. For this curve,
    Lie \cite{L19} recently  used a unified approach to obtain the uniform  $L^p$-boundedness $(1<p<\infty)$ of $M^\Gamma$, $\h^\Gamma$ and related operators by the LGC-methodology (see page 9 in that paper for the details).
    In fact, some more general cases are proved in that paper. For the special case $\Gamma_x(t)=(t,a_2(x)t^2+a_3(x)t^3)$,  via
     making full use of Littlewood-Paley theory and the  commutation relation $\h^\Gamma P_k^{(2)}=P_k^{(2)}\h^\Gamma$, Wan  \cite{Wan19} proved $\h^\Gamma$ is $L^p$ bounded for  $p\in(1,\infty)$. 
     For $\Gamma_x(t)=(t,u(x,y)[t]^b)$ with $b>1$,    if the measurable  $u=u(x,y)$ does not own any regularity,
     Guo et al.  \cite{GRSP,GRSP2} showed that   $\h^\Gamma$ is not bounded on $L^p$ for any   $p\in(1,\infty)$, which  is different from the operator $M^\Gamma$, we refer \cite{MR98}.  
\vskip.1in
\subsection{ Motivations}
The uniform $L^p$ estimates of the linear and multilinear singular integral operators and the maximal operators along various  variable polynomial curves  have been studied,   
however, for $\h^\Gamma$ and $M^\Gamma$ along the curve $(t,P_y(t))$ with $P_{y}'(0)=0$  or $(P_x(t),t)$ with $P_{x}'(0)=0$, we do not know so far whether their uniform estimates hold without imposing  any  assumptions to the coefficients. This paper gives partial progress on this question.  More precisely, 
we   investigate  the uniform $L^p$ estimates of $\h^\Gamma$ and $M^\Gamma$ along  a new class of variable curves $(P(t),u(x)t)$ with $P'(0)=0$. Indeed, the change of variable gives that each curve can be transformed to the one like $(P(v(x)t),t)$ with certain measurable $v(x)$, which is  a special case of $(P_x(t),t)$ and 
can not be treated  by directly  using the arguments in the previous works. In addition, 
 we only pay attention to the polynomial whose coefficients do not depend on the variable(s) since $\h^\Gamma$ may not be  bounded otherwise, see \cite{GHLJ}.

\subsection{Outline of the proof and comments}
 Since the present work belongs to the non-zero curvature cases, some arguments  are   related  to those of the previous works  such as \cite{GHLJ,L19,LX16}.
  By a decomposition technique, we first  reduce the goal to the  estimates   related to   $N-1$  ``dominating sets".  
In every ``dominating set", we reduce
the objective estimate   into four  segments via the partition of unity given by (\ref{fenjie}) in order to 
 to
 quantify   the phase function more conveniently. We list the approaches treating each segment
 as follows:
\vskip.1in
1 
The first segment is  the low frequency case which is estimated by Taylor's expansion, while the second segment is the off-diagonal frequency case which we deal with by exploiting
 integration by parts as well as  Taylor's expansion.
  We remark  that  the function of Taylor's expansion in the second segments is  to exploit the large lower bound of the derivative of the phase function.
 In addition, to estimate  the second segment, we also need
 the vector-valued shifted maximal estimate and the estimates of some variants of
 vector-valued singular    integrals  (which are also used in the fourth segment).   For the proof of the third segment which belongs to the off-diagonal frequency case as well, we only give a sketch since it can be handled by  combining     the previous two approaches treating   the first and the second segments.
\vskip.1in
2 The estimate  of the fourth segment is obtained by establishing 
a new  $\epsilon$-improving estimate (see Lemma \ref{l31}) and developing some important 
arguments  in  \cite{L19}.  Indeed,  the proof in the current paper is  
 more involved than  the previous works.
The main novelty is  to achieve  the exponential decay for the $L^2$ estimate of $\mathcal{H}_{\De_4,m}f$ (see section \ref{hh}),  which is the key part in this paper.
 The  non-degenerate phase function makes us  apply the method of stationary phase to get an asymptotics. However, because of   the phase function depending on  $x$ variable,  we need several new ideas in the  estimate of  this asymptotics. More precisely, these ideas are included in the following  steps: 
   \begin{itemize}
   \item
   discretizing the phase function and  reducing the problem to  the analysis of the integrand in certain new  oscillatory integral; 
   \vskip.1in
    \item
    expressing  this  integrand by regarding it  as a periodic function  and  making full use of  $TT^\star$ method, and then reducing  the goal to the estimate of  certain integral  expected to own an exponential
   decay; 
      \vskip.1in
    \item
   establishing a crucial $\epsilon$-improving estimate given by Lemma \ref{l31}  for certain  ``bad" set and then verifying the decay estimate in the former step. 
   \end{itemize}
     

\vskip.1in
{\bf Organization of the paper}
 In Section \ref{s2},  we give the partition of unity and the reduction of Theorem \ref{t1}.  The third section lists two auxiliary consequences which are applied to the tricky  estimate in the sixth section, and the followed section  gives  the proof of Theorem \ref{t2} for $\sigma=1$.  In the fifth section, we prove Theorem \ref{t2} for $\sigma=2$ and $\sigma=3$. In the sixth section, Theorem \ref{t2} for $\sigma=4$ is proved by the auxiliary consequences in the third section. In the 7th section, we give the  proof of Lemma \ref{l6.1} which is used in the sixth section. At last, we recall some useful results including the shifted maximal operator in the Appendix.
 \vskip.1in
{\bf Notations}\ \ 
We use $e(x)=e^{2\pi i x}$. The Fourier transform $\widehat{f}(\xi)$ of $f(x)$ is defined by $\int e(-\xi x) f(x) dx$, while $g^{\vee}$ is the Fourier inverse transform of $g$ defined by $g^{\vee}(x)=\int g(\xi) e(\xi x)d\xi$. For convenience,  hereinafter, we  omit $2\pi$ in the notation of $e(x)$.  $\mathcal{F}^y$ is the Fourier transform in the $y-$variable.
We   use $x\lesssim y$ to stand for there exists a uniform constant $C$  such that $x\le Cy$.
$x\gtrsim y$ means $y\lesssim x$, and  the notation $x\thicksim y$ signifies that $x\gtrsim y$ and $x\lesssim y$. The absolute or uniform constant in what follows may be hidden in ``$\lesssim$". We use
 $C_{\gamma},C(\gamma)$ to represent  the constants depending on $\gamma$, and the constants hidden in $\lesssim_N$ depend only on $N$.
 We use
$\|\cdot\|_p$ to stand for $\|\cdot\|_{L^p}$.
\section{The reduction of Theorem \ref{t1}}
\label{s2}
Let $\theta_+(t)$ be supported in $(\frac{1}{9},9)$ such that $\sum_{j\in\Z}\theta_+(2^jt)=1$ for all $t>0$. Let $\theta_-(t)=\theta_+(-t)$, $\theta(t)=\theta_+(t)+\theta_-(t)$, then
$\sum_{j\in\Z} \theta(2^jt)=1$ for all $t\neq0$.
Let $\rho(t)=\frac{\theta(t)}{t}$, denote $\rho_j(t):=2^j\rho(2^jt)$,
we have for all $t\neq 0$,
\begin{equation}\label{idd}
\frac{1}{t}=\sum_{j\in\Z}2^j\frac{\theta(2^jt)}{2^jt}
=\sum_{j\in\Z}\rho_j(t).
\end{equation}
\subsection{First reduction of Theorem \ref{t1}}
We only give a detailed proof  for  $\h^\Gamma f$ since $M^\Gamma f$ can be similarly treated. 
Setting  $\h_jf(x,y)$ as
$$\h_jf(x,y):=\int f(x-P(2^{-j}t),y-u(x)2^{-j}t)\rho(t)dt,$$
satisfying  $\|\h_jf\|_p\lesssim \|f\|_p$,
we obtain via (\ref{idd}) that
$
\h^\Gamma f(x,y)
=\ \sum_{j\in\Z}\h_jf(x,y).
$
Next,
we seek the ``dominating monomial" of the 
polynomial $P(2^{-j}t)$.
\vskip.1in
Now, we give a further  decomposition of $\h^\Gamma f$.
For $l=2,\cdot\cdot\cdot,N$, we denote
\begin{equation}\label{sing}
S_l=\big\{j\in\Z:\ |j|>2^N,\ |a_l|2^{-jl}>\digamma_N|a_i|2^{-ji}\ {\rm for\ all\ }\ i\neq l \ {\rm and }\ 2\le i\le N\big\},
\end{equation}
where $\digamma_N$ is a  large enough constant depending only on $N$.
It is easy to see $S_l\cap S_{l'}=\varnothing$ for $l\neq l'$.
Denote
$S_o=\big(\bigcup_{l=2}^N S_l\big)^c,$
then we have a decomposition of $\Z$, that is
$\Z=S_o\cup\big(\bigcup_{l=2}^N S_l\big)$.
In addition, for any $j\in S_o$, we have
$|j|\le 2^N$ or
there exists $(\lp,\lp')\in \{2,\cdot\cdot\cdot,N\}^2$ satisfying $\lp\neq \lp'$ such that
$|a_\lp|2^{-j\lp}\le C_1(N)|a_{\lp'}|2^{-j\lp'}\le C_2(N)|a_\lp|2^{-j\lp}$
for certain constants $C_2(N)\ge C_1(N)>0$,
which immediately yields
$\sharp S_o\lesssim_N1$
and
$\|\sum_{j\in S_o}\h_jf \|_p\lesssim_N\sup_{j\in S_o}\|\h_jf\|_p\lesssim_N \|f\|_p.$
Thus it suffices to show that $\|\sum_{j\in S_l}\h_jf \|_p\lesssim_N\|f\|_p$ holds for each $l\in \{2,\cdot,\cdot,\cdot,N\}$.
\subsection{Second reduction of Theorem \ref{t1}}
Via rescaling arguments, we can assume 
  \begin{equation}\label{p}
P(t)=t^l+\sum_{2=i\neq l}^N a_i t^i.
\end{equation}
In order  to avoid the negative effect of the coefficient of $t^l$, this  process is necessary.
Fourier inverse transform gives
$$\h_jf(x,y)=\int_{\xi,\eta}\widehat{f}(\xi,\eta)
e(\xi x+\eta y) M_j(\xi,\eta)d\xi d\eta,$$
where
$$M_j(\xi,\eta):=\int e(\phi_{j,\xi,\eta,x}(t))\rho(t)dt,\ \phi_{j,\xi,\eta,x}(t):=\xi P(2^{-j}t)+\eta u(x)2^{-j}t.$$
In the following, we will decompose the multiplier $M_j(\xi,\eta)$ into four main parts. Before we go ahead, we introduce the partition of unity as follows:
\begin{equation}\label{fenjie}
\sum_{(m,n)\in \Z^2}\sum_{k\in\Z}\widehat{\Phi}(\frac{\xi}{2^{jl+m}})
\widehat{\Phi}(\frac{\eta}{2^{k}})
\widehat{\Phi}(\frac{u(x)}{2^{j-k+n}})=1,
\end{equation}
where $\widehat{\Phi}(\cdot)=\widehat{\Phi}_+({\cdot})
+\widehat{\Phi}_-(\cdot)$,
and  $\widehat{\Phi}_\pm$ supported in $\pm[1/2,2]$ are defined as $\theta_\pm$. (\ref{fenjie}) is based on the phase function $\phi_{j,\xi,\eta,x}(t)$ in $M_j(\xi,\eta)$.
To discuss the different behaviors of  $\phi_{j,\xi,\eta,x}(t)$, we also need  a  decomposition of  $\Z^2$: $\Z^2=\cup_{i=1}^4\De_i$ given by
\begin{equation}\label{dde}
\begin{aligned}
\Delta_1:=&\ \{(m,n)\in\Z^2:\ \max\{m,n\}\le 0\},\\
\De_2:=&\ \{(m,n)\in\Z^2: \ \max\{m,n\}>0,\ |m-n|> 100l,\ \min\{m,n\}>0
\}\\
\De_3:=&\ \{(m,n)\in\Z^2:\ \max\{m,n\}>0,\ |m-n|> 100l,\ \min\{m,n\}\le0\},\\
\De_4:=&\ \{(m,n)\in\Z^2:\ \max\{m,n\}>0,\ |m-n|\le  100l\}.
\end{aligned}
\end{equation}
We will give an explanation of this process  at the end of this section.
Then
$M_j(\xi,\eta)
=\sum_{i=1}^4M_{j,\De_i}
(\xi,\eta),$
where
$$
M_{j,\De_i}
(\xi,\eta)=\sum_{(m,n)\in \De_i}\sum_{k\in\Z}\widehat{\Phi}(\frac{\xi}{2^{jl+m}})
\widehat{\Phi}(\frac{\eta}{2^{k}})
\widehat{\Phi}(\frac{u(x)}{2^{j-k+n}})M_j(\xi,\eta),
$$
 and it suffices to show the following theorem.
\begin{thm}\label{t2}
For $1\le \sigma\le 4$, define
\begin{equation}\label{aim}
\h_{\De_\sigma}f(x,y)=\int_{\xi,\eta}\widehat{f}(\xi,\eta)
e(\xi x+\eta y) \sum_{j\in S_l} M_{j,\De_\sigma}(\xi,\eta)d\xi d\eta,
\end{equation}
then we have
$\|\h_{\De_\sigma}f\|_p\lesssim_N \|f\|_p$
holds for $1<p<\infty$.
\end{thm}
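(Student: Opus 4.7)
My strategy is to handle each of the four regions $\De_\sigma$ independently, summing over $j\in S_l$ after controlling $M_{j,\De_\sigma}$ on the frequency side. Throughout, I use that $j\in S_l$ forces the $t^l$-monomial of $P(2^{-j}t)$ to dominate, so the two natural phase sizes under the cutoffs in (\ref{fenjie}) are $|\xi|2^{-jl}\sim 2^m$ and $|\eta||u(x)|2^{-j}\sim 2^n$. The four regions $\De_\sigma$ distinguish the qualitative behaviour of $\phi_{j,\xi,\eta,x}$.

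\textbf{Region $\De_1$.} Here $\max\{m,n\}\le 0$, so the phase $\phi_{j,\xi,\eta,x}(t)$ is $O(1)$ on the support of $\rho$. I would Taylor-expand $e(\phi_{j,\xi,\eta,x}(t))$ about $0$ and use the mean-zero property of $\rho$ to gain one factor of the phase, producing geometric decay in $2^{\min\{m,n\}}$. After restoring the Littlewood--Paley cutoffs, the remaining sum $\sum_{j\in S_l}\h_j$ is realized as a Littlewood--Paley projection composed with a shifted maximal operator in $x$ indexed by the value of $u(x)$; its $L^p$ bound follows from the vector-valued estimates recalled in the Appendix.

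\textbf{Regions $\De_2$ and $\De_3$.} In $\De_2$ both $m,n>0$ with $|m-n|>100l$, so one of the two phase contributions dominates and $|\p_t\phi_{j,\xi,\eta,x}|\gtrsim 2^{\max\{m,n\}}$ on $\mathrm{supp}\,\rho$. A single integration by parts gives decay $2^{-\max\{m,n\}}$, and a further Taylor expansion in the subdominant variable (again using $\int\rho=0$) supplies additional decay in $2^{-|m-n|}$, so the symbol is summable in $(m,n)$. The resulting pieces are differences of classical singular integrals in the $y$-variable composed with shifted maximal operators in $x$, again handled by the vector-valued shifted maximal estimate together with vector-valued singular integral estimates from the Appendix. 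Region $\De_3$ is mixed---one of $m,n$ is $\le 0$ and the other is $>0$ with $|m-n|>100l$---so I would Taylor-expand in the small exponent and integrate by parts in the large one. Since this merely combines the two approaches above, only a sketch is needed.

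\textbf{Region $\De_4$: the main obstacle.} When $|m-n|\le 100l$ and $\max\{m,n\}>0$ the two terms of $\phi_{j,\xi,\eta,x}$ are comparable in size, and $\p_t\phi_{j,\xi,\eta,x}$ has a non-degenerate critical point $t_c=t_c(\xi,\eta,u(x))$. The method of stationary phase then yields an asymptotic expansion of $M_{j,\De_4}(\xi,\eta)$ whose amplitude and leading phase depend on $x$ through $u(x)$, so the resulting object is not a standard Fourier multiplier. Following the outline in Section~\ref{s1}, I would discretize the phase into dyadic scales, view the $u(x)$-dependent integrand as a periodic function and Fourier-expand it, and then run a $TT^\star$ argument on the corresponding operator. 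This reduces the target $L^2$ bound to controlling the measure of overlap of certain frequency-localized ``bad'' sets, and summing back over the diagonal $m\sim n$ demands an exponential-in-scale decay. The $\epsilon$-improving estimate of Lemma~\ref{l31} is tailored to supply exactly this decay for the separated sets that the stationary phase produces; interpolating the resulting sharp $L^2$ estimate against a trivial $L^\infty$ bound (obtained from the uniform $L^\infty$ boundedness of each $\h_j$) then recovers the full $L^p$ range. The principal technical difficulty I anticipate is verifying the geometric hypotheses of Lemma~\ref{l31} for the $x$-dependent sets carved out by $u(x)$: this is the place where the present variable-coefficient setting genuinely departs from \cite{L19}, and is where most of the delicate combinatorial work will live.
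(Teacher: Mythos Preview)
Your overall plan tracks the paper's architecture closely---Taylor expansion for $\De_1$, integration by parts plus Taylor expansion for $\De_2,\De_3$, and stationary phase followed by discretization and a $TT^\star$ argument for $\De_4$. Two points, however, are genuine gaps.

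\textbf{The interpolation endpoint in $\De_4$.} You propose to interpolate the sharp $L^2$ estimate against a ``trivial $L^\infty$ bound obtained from the uniform $L^\infty$ boundedness of each $\h_j$''. This step fails: $\h_{\De_4,m}$ is a sum over $j\in S_l$ and $k\in\Z$, each summand carrying Littlewood--Paley projections $\widehat{\Phi}(\xi/2^{jl+m})\widehat{\Phi}(\eta/2^k)$, and such projections are not bounded on $L^\infty$; there is no trivial $L^\infty$ bound to interpolate against. The paper instead proves directly (Lemma~\ref{llp}) that $\|\h_{\De_4,m}f\|_p\lesssim m^2\|f\|_p$ for every $1<p<\infty$, using exactly the square-function-plus-shifted-maximal machinery you already invoke for $\De_2$; the factor $m^2$ is the logarithmic loss from Lemma~\ref{sme}. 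One then interpolates this polynomial-loss $L^p$ bound against the exponential $L^2$ decay $2^{-\epsilon_0 m}$ (Lemma~\ref{ll2}) to sum in $m$. Also note that the counting input feeding the $TT^\star$ argument is Lemma~\ref{p1} (the ``bad set'' cardinality bound), not Lemma~\ref{l31} directly; Lemma~\ref{l31} is the algebraic tool used inside the proof of Lemma~\ref{p1}.

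\textbf{The decay in $\De_1$.} A single Taylor expansion of $e(\phi_{j,\xi,\eta,x})$ together with $\int\rho=0$ produces a gain of order $2^{\max\{m,n\}}$, not $2^{\min\{m,n\}}$, and $\sum_{m,n\le 0}2^{\max\{m,n\}}$ diverges. The paper expands the two factors $e(\xi P(2^{-j}t))$ and $e(\eta u(x)2^{-j}t)$ \emph{separately} into a double series indexed by $(q,\upsilon)$. The strips $q=0$ and $\upsilon=0$ are delicate: one first collapses the decayless variable via $\sum_{m\le 0}\widehat{\Phi}(\xi/2^{jl+m})=\widehat{\hbar}(\xi/2^{jl})$ (respectively in $n$), and for the $\upsilon=0$ strip an Abel-summation argument is needed to absorb the remaining $j$-sum into a maximal truncation controlled by Cotlar's inequality. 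Your sketch should anticipate this extra work.
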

We end this section with  an  explanation of (\ref{dde}).
The low frequency component $\h_{\De_1}f$ is not 
oscillatory,  and can be seen as a more well-behaved integral, see section \ref{ll}.  We  treat the mixed frequency components $\h_{\De_2}f$ and $\h_{\De_3}f$ by integration by parts and square function estimate because of   the rapid decay stemming from  $|m-n|\ge 100l$, see section \ref{lh}.   The difference between  $\h_{\De_2}f$ and $\h_{\De_3}f$ is that we shall use Taylor's expansion to $\h_{\De_3}f$ before applying integration by parts.
At last, the high frequency component $\h_{\De_4}f$ needs a more intricate analysis,
which depends on a crucial estimate given by  Lemma \ref{ll2}, see section \ref{hh}.

\section{Auxiliary consequences}
\label{AC}
The following auxiliary lemmas are important  in proving the uniform  estimate of $\h_{\De_4}$.
Let $N$ be a positive integer, $C_0$, $C_1$ and $C_2$ be three uniform constants. Let  $\mathcal{F}(t)$ be a polynomial of degree  not more than $N$ which is  supported  in  $S:=\{t\in\R:\ C_0^{-1}\le |t |\le C_0\}$ and   satisfies   $C_1^{-1}\le |\mathcal{F}'(t)|\le C_1$ and $|\mathcal{F}^{''}(t)|\le C_2$ in $S$. Denote the inverse function of $\mathcal{F}$ by $F$. Obviously,
\begin{equation}\label{perty}
C_0^{-1}\le |F|\le C_0,\ F'(\tau)=\frac{1}{\mathcal{F}'(F(\tau))},\ |F'|\ge C_1^{-1}.
\end{equation}
\begin{lemma}\label{l31}
Let $\mathfrak{S}=\{s_i\}_{i=1}^{2^N}\subset [-\mathfrak{C}_0,\mathfrak{C}_0]$ with certain uniform $\mathfrak{C}_0>0$, and $\mathfrak{A}=\{\alpha_j\}_{j=1}^N\subset\R$ be two strictly increasing sets. Set $d(\mathfrak{S})=\min_{1\le i<j\le 2^N}|s_j-s_i|$,
$D_j(\mathfrak{A})=\prod_{1=i\neq j}^N|\alpha_j-\alpha_i|$ if $N\ge2$ and $D_j(\mathfrak{A})=1$ if $N=1$.
 There are a constant $A_0>0$ and $\{a_j\}_{j=1}^N\subset \R$
such that
\begin{equation}\label{coo}
|\sum_{j=1}^N a_jF(s)^{\alpha_j}|\le A_0 \ \   {\rm\ for\ any}\ \ s\in \mathfrak{S},
\end{equation}
 then there exists a positive constant $\tilde{C}$ depending only on $\{\alpha_i\}_{i=1}^N$, $C_1$ and $C_0$ such that 
\begin{equation}\label{aa1}
|a_j|\le \frac{\tilde{C}^{N\big((\alpha_N-\alpha_1)+\min_{1\le j\le N}|\alpha_j|+2\big)}A_0}{\big(d(\mathfrak{S})\big)^{N-1} D_j(\mathfrak{A})}
\end{equation}
holds for  $1\le j\le N$.
In particular,  $\tilde{C}=  2C_1C_0^{\max_{1\le i\le N}|\alpha_i|+1}$ is enough.
\end{lemma}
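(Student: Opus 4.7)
The plan is to proceed by induction on $N$, implementing an elimination step that, at each iteration, removes one exponent from the generalized Müntz polynomial $P(s) := \sum_{j=1}^N a_j F(s)^{\alpha_j}$ while carefully tracking the accumulated constants.

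\textbf{Base case} $N = 1$: Evaluating the hypothesis at any single $s\in\mathfrak{S}$ gives $|a_1|\le A_0|F(s)|^{-\alpha_1}\le A_0 C_0^{|\alpha_1|}$. Since $D_1(\mathfrak A)=1$ and $\tilde C = 2C_1 C_0^{|\alpha_1|+1}\ge C_0^{|\alpha_1|}$, this matches the claimed bound.

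\textbf{Inductive step}: Assume the lemma for $N-1$ exponents and $2^{N-1}$ points. Partition $\mathfrak{S}$ into $2^{N-1}$ consecutive pairs $\{s_{2i-1},s_{2i}\}$, set $y_k := F(s_k)$, and form the $a_1$-eliminating combination
\[
\phi_i := y_{2i-1}^{-\alpha_1}P(s_{2i-1}) - y_{2i}^{-\alpha_1}P(s_{2i}) = \sum_{j=2}^N a_j\bigl(y_{2i-1}^{\alpha_j-\alpha_1}-y_{2i}^{\alpha_j-\alpha_1}\bigr).
\]
The hypothesis and $|y|^{-\alpha_1}\le C_0^{|\alpha_1|}$ give $|\phi_i|\le 2C_0^{|\alpha_1|}A_0$. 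By the mean value theorem,
\[
y_{2i-1}^{\alpha_j-\alpha_1}-y_{2i}^{\alpha_j-\alpha_1} = (\alpha_j-\alpha_1)\,\xi_{i,j}^{\alpha_j-\alpha_1-1}(y_{2i-1}-y_{2i}),
\]
with $|\xi_{i,j}^{\alpha_j-\alpha_1-1}|\in[C_0^{-(\alpha_N-\alpha_1+1)},C_0^{\alpha_N-\alpha_1+1}]$; combined with $|y_{2i-1}-y_{2i}|\ge C_1^{-1}d(\mathfrak{S})$ from the property \eqref{perty}, this exhibits $\phi_i$ as the value, at a representative point $\tilde s_i\in[s_{2i-1},s_{2i}]$, of a reduced Müntz polynomial with $N-1$ terms, coefficients $a_j(\alpha_j-\alpha_1)$, and exponents $\{\alpha_j-\alpha_1-1\}_{j=2}^N$, up to a uniform multiplicative error absorbed into $\tilde C$. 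The reduced point set $\tilde{\mathfrak S}=\{\tilde s_i\}_{i=1}^{2^{N-1}}$ satisfies $d(\tilde{\mathfrak S})\ge d(\mathfrak{S})$. Invoking the inductive hypothesis yields bounds on $a_j(\alpha_j-\alpha_1)$ for $j\ge 2$, whence $|a_j|$ for $j\ge 2$ follow after division by $|\alpha_j-\alpha_1|$ — this is exactly the factor that enters $D_j(\mathfrak A)$. Finally, $|a_1|$ is recovered by substituting the estimates on $a_2,\ldots,a_N$ into the original equation at a single point.

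\textbf{Main obstacle}: The mean-value points $\xi_{i,j}$ depend on the index $j$, so the "reduced polynomial" emerges only up to a uniform multiplicative error, and controlling this uniformly across $j$ through the bounds $C_0^{-1}\le|F|\le C_0$ introduces a factor of at most $C_0^{\alpha_N-\alpha_1+O(1)}$ per elimination. The delicate bookkeeping lies in verifying that after $N-1$ iterations the accumulated constant fits the exponent $N\bigl((\alpha_N-\alpha_1)+\min_j|\alpha_j|+2\bigr)$ rather than a worse quantity: each step contributes a factor of $\tilde C$ to a power bounded by $(\alpha_N-\alpha_1)+|\alpha_1|+O(1)$, and the $\min_j|\alpha_j|$ term in the final exponent is obtained by choosing optimally which exponent to eliminate at the terminal recursion step. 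The count $2^N$ of points is precisely what is needed to sustain the $N-1$ halving iterations while preserving enough data at each stage to invoke the inductive hypothesis.
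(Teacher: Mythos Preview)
Your overall architecture --- induction on $N$, eliminating one exponent by pairing consecutive points and applying the mean value theorem --- is exactly the paper's. However, the step you flag as the ``main obstacle'' is a self-inflicted gap, not a technicality to be absorbed. By applying the mean value theorem \emph{term by term} to each $y\mapsto y^{\alpha_j-\alpha_1}$, you obtain intermediate points $\xi_{i,j}$ that depend on $j$. Replacing $\xi_{i,j}$ by a common $\xi_i$ does not produce a uniform multiplicative error on the sum: the correction factor $(\xi_{i,j}/\xi_i)^{\alpha_j-\alpha_1-1}$ is different for each $j$, so the ``reduced polynomial'' has coefficients $a_j(\alpha_j-\alpha_1)r_{i,j}$ that vary with the evaluation index $i$. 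That destroys the structure needed to invoke the inductive hypothesis, which is about a \emph{fixed} polynomial evaluated at many points. Your proposal acknowledges the obstacle but does not resolve it.

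The paper avoids the issue entirely by applying the mean value theorem once to the full function $s\mapsto a_1+\sum_{j\ge 2}a_jF(s)^{\alpha_j-\alpha_1}$ on each interval $(s_{2l-1},s_{2l})$, producing a single $\tilde s_l$ per pair. The derivative at $\tilde s_l$ is $F'(\tilde s_l)\sum_{j\ge 2}(\alpha_j-\alpha_1)a_jF(\tilde s_l)^{\alpha_j-\alpha_1-1}$, and the $j$-independent prefactor $F'(\tilde s_l)$ is removed using $|F'|\ge C_1^{-1}$; after multiplying by $F(\tilde s_l)$ one lands exactly on the reduced hypothesis with coefficients $(\alpha_j-\alpha_1)a_j$ and exponents $\alpha_j-\alpha_1$. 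A further minor difference: instead of recovering $a_1$ by back-substitution (which works but accumulates a sum over $j\ge 2$), the paper simply reruns the elimination multiplying by $|F(s)|^{-\alpha_{k+1}}$ to peel off the top exponent, giving the bound on $a_1$ by symmetry with the same bookkeeping.
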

\begin{rem}\label{rp}
The function $F$ in this lemma can be relaxed to any smooth function satisfying the first and the third conditions in (\ref{perty}). In addition, we require  only  (\ref{aa1}) with  ``$\min_{1\le j\le N}$" replaced by  ``$\max_{1\le j\le N}$" in the following context.
\end{rem}
\begin{proof}
We shall prove (\ref{aa1}) with $\tilde{C}\ge  2C_1C_0^{\max_{1\le i\le N}|\alpha_i|+1}$  by induction over the values of $N$. Obviously, $N=1$ is trivial  since
$|a_1|\le \frac{A_0}{|F(s)|^{\alpha_1}}\le   C_0^{|\alpha_1|} A_0\le \tilde{C} A_0$
holds for any $s\in \mathfrak{S}$.
We now assume that (\ref{aa1}) holds for  $N=k$,
 it suffices to show (\ref{aa1})  for $N=k+1$. Multiplying both sides of 
 \begin{equation}\label{k0}
|\sum_{j=1}^{k+1}a_jF(s)^{\alpha_j}|\le A_0\ \   {\rm\ for\ any}\ \ s\in \mathfrak{S},
\end{equation}
  by $|F(s)|^{-\alpha_1}$, we  obtain  
\begin{equation}\label{k11}
 |a_1+\sum_{j=2}^{k+1}a_jF(s)^{\alpha_j-\alpha_1}|\le A_0|F(s)|^{-\alpha_1}\le  C_0^{|\alpha_1|}A_0
 \end{equation}
holds  for any $s\in \mathfrak{S}$.
 Applying the mean value theorem to
  every interval  $(s_{2l-1},s_{2l})$ with $l=1,2,3\cdot\cdot\cdot,2^{N-1}$, we derive a collection of intermediate points $\{\tilde{s}_l\}_{l=1}^{2^N-1}$, which we denote by $\mathfrak{S}_\star$. Thanks to this process, we have obtained a new object like the summation on the left side of (\ref{k11}) without   the constant term $a_1$. More precisely,
 for all $s\in \mathfrak{S}_\star$,  we have
$$\frac{2C_1C_0^{|\alpha_1|+1}A_0}{d(\mathfrak{S})}\ge C_1C_0 |\sum_{j=2}^{k+1}(\alpha_j-\alpha_1)a_jF(s)^{\alpha_j-\alpha_1-1}
F'(s)|\ge |\sum_{j=2}^{k+1}(\alpha_j-\alpha_1)a_j
F(s)^{\alpha_j-\alpha_1}|,$$
which, with  the assumption (\ref{aa1}) for  $N=k$, leads  to that 
$$|a_j|(\alpha_j-\alpha_1)
\le\ \frac{\tilde{C}^{k((\alpha_{k+1}-\alpha_2)+\min_{2\le j\le k+1}|\alpha_j-\alpha_1|+2)}}{(d(\mathfrak{S_\star}))^{k-1} D_j(\mathfrak{A_\star})}
\frac{2C_1C_0^{|\alpha_1|+1}A_0}{d(\mathfrak{S})}$$
holds for all $2\le j\le k+1$,
where $\mathfrak{A_\star}=\{\alpha_j-\alpha_1\}_{j=2}^{k+1}$.
Note that  $\{(\alpha_j-\alpha_1)a_j\}_{j=2}^{k+1}$ are the new coefficients. Utilizing 
$d(\mathfrak{S})\le d(\mathfrak{S}_\star)$ which is deduced from the choice of $\{(s_{2l-1},s_{2l})\}_{l=1}^{2^{N-1}}$, $(\alpha_j-\alpha_1)D_j(\mathfrak{A_\star})=D_j(\mathfrak{A}),
$
and the choice of $\tilde{C}$, 
 we obtain (\ref{aa1}) with $N$ replaced by $k+1$ holds for $2\le j\le k+1$.
Thus it remains to show the desired estimate of $a_1$. As a matter of fact,  
it can be treated by a similar way.  Multiplying both sides of   (\ref{k0}) by $|F(s)|^{-\alpha_{k+1}}$ and following the arguments below (\ref{k0}),   we  can also  obtain   the estimate of $a_1$, which completes the proof of Lemma \ref{l31}.
\end{proof}
Next, we introduce a lemma giving an effective control of certain sparse set called ``bad" set in section \ref{s1}. More importantly, it is essential in proving  the uniform  estimate of $\h_{\De_4}f$. 
Define
$$B(x,t):=\sum_{k=1}^Na_k(x)(F(t))^{\alpha_k},$$
where $F$ is defined as the previous statement,  $\alpha_1<\cdot\cdot\cdot<\alpha_N$, $\{\alpha_k\}_{k=1}^N \subset \R$, $x\in X\subseteq [-\mathfrak{C}_1,\mathfrak{C}_1]$ for certain absolute constant $\mathfrak{C}_1>0$  and $\{a_k(x)\}_{k=1}^N$ is a series of measurable functions satisfying $|a_k(x)|\lesssim 1$ for all $1\le k\le N$. Denote $\vec{\alpha}:=\{\alpha_1,\cdot\cdot\cdot,\alpha_N\}$.
\begin{lemma}\label{p1}
Let $m$  and $\epsilon$ be two positive   constants satisfying $\epsilon m\ge 1$.   Denote
$$D_m=\{w\in 2^{-\frac{m}{2}}\Z:\ C_5^{-1}\le |w|\le C_5\},
\ \mathfrak{D}_m=\{l\in 2^{-\frac{m}{2}}\Z:\ |l|\le C_4 \}$$
where $C_4,C_5\ge 1$ are two  absolute constants,
and
$$
\mathfrak{G}_{B,\epsilon}:=
\{(l,w)\in\mathfrak{D}_m\times D_m:\ |A_{B,\epsilon}(l,w)|\le 2^{-2\epsilon m}\},\ \ \mathfrak{H}_{B,\epsilon}:=(\mathfrak{D}_m\times D_m) \setminus \mathfrak{G}_{B,\epsilon},$$
where
$$A_{B,\epsilon}(l,w):=\{x\in X:\ |B(x,x+l)-w^{-1}|
\le 2^{-(1/2-2\epsilon)m}\}.$$
Denote
$$\mathfrak{H}_{B,\epsilon}^1:=\{l\in \mathfrak{D}_m:\ \exists\ w\in D_m,\  s.t.\  (l,w)\in \mathfrak{H}_{B,\epsilon}\}.$$
If
\begin{equation}\label{lower}
\inf_{x\in X}|\frac{\p}{\p t} B(x,t)|\gtrsim1,
\ m\ge \aleph_1,\ \epsilon\le\aleph_2^{-1}
\end{equation}
 for certain large enough constants $\aleph_1=\aleph_1(N,\vec{\alpha})$
 and $\aleph_2=\aleph_2(N)$,
 then there exists a positive constant $\mu=\mu(N)\in (0,1/2)$  such that
 \begin{equation}\label{sign}
\sharp\mathfrak{H}_{B,\epsilon}^1\lesssim \ 2^{(\frac{1}{2}-\mu)m}.
\end{equation}
\end{lemma}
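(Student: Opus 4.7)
The plan is to argue by contradiction, combining a double-counting / pigeonhole argument with the $\epsilon$-improving estimate of Lemma~\ref{l31}. Assume for contradiction that $\sharp\mathfrak{H}_{B,\epsilon}^1>C_0\,2^{(1/2-\mu)m}$ for some $\mu=\mu(N)\in(0,1/2)$ to be fixed at the end. For every bad $l$ I would select a witness $w(l)\in D_m$ and write $E_l:=A_{B,\epsilon}(l,w(l))$, which satisfies $|E_l|>2^{-2\epsilon m}$ and $|B(x,x+l)-w(l)^{-1}|\le\delta$ for $x\in E_l$, where $\delta:=2^{-(1/2-2\epsilon)m}$.

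The first step is double counting: $\sum_{l}|E_l|>C_0\,2^{(1/2-\mu-2\epsilon)m}$, whence pigeonhole in $x$ produces some $x^*\in X$ with the fiber $L^*:=\{l:x^*\in E_l\}$ of cardinality $|L^*|\gtrsim 2^{(1/2-\mu-2\epsilon)m}$. On $L^*$ one has $|B(x^*,x^*+l)-w(l)^{-1}|\le\delta$, and since $|\partial_t B(x^*,\cdot)|\gtrsim 1$ the values $\{w(l)^{-1}:l\in L^*\}$ faithfully track the bi-Lipschitz function $l\mapsto B(x^*,x^*+l)$ on the grid. From $L^*$ I would next extract, via a greedy selection in $[-C_4,C_4]$, a subcollection $l_1<\cdots<l_{2^{N+1}}$ with pairwise separation $d$ quantified in terms of $|L^*|$ and the grid spacing $2^{-m/2}$.

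Setting $s_i:=x^*+l_i$ and applying Lemma~\ref{l31} (with the augmented exponent set $\{0,\alpha_1,\ldots,\alpha_N\}$ and the ``$\max$'' variant from Remark~\ref{rp}) to the polynomial-like sum $\sum_{k=0}^N\tilde a_k F(s)^{\alpha_k}$ with $\tilde a_0:=-w(l_1)^{-1}$ and $\tilde a_k:=a_k(x^*)$ gives $|w(l_1)^{-1}|\lesssim A_0/d^N$, where $A_0:=\delta+\max_i|w(l_i)^{-1}-w(l_1)^{-1}|$. Combined with the lower bound $|w(l_1)^{-1}|\ge C_5^{-1}$, this produces the key inequality $d^N\lesssim A_0$, from which the target contradiction to the assumed size of $\mathfrak{H}_{B,\epsilon}^1$ should follow once $\mu=\mu(N)$ is chosen small enough.

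The hard part will be closing this inequality into an actual contradiction. The difficulty is that at the single chosen $x^*$ the monotonicity of $B$ ties $A_0$ directly to $d$ (well-separated $l$'s force well-separated $w(l)^{-1}$'s), so the naive estimate gives $A_0\sim d$ and the inequality $d^N\lesssim d$ is vacuous for $N\ge 1$. To overcome this I would insert a secondary pigeonhole over dyadic $w$-bins of width $2^{-\rho m}$ to decouple the $A_0$-scale from the $d$-scale, at the cost of shrinking $|L^*|$ and hence worsening $d$; the admissible range of $l$ within a bin is itself of length $\lesssim 2^{-\rho m}$ by the lower bound on $|\partial_t B|$. Optimizing the parameter $\rho$ against $\mu$, together with the hypothesis $\epsilon\le\aleph_2^{-1}$ with $\aleph_2=\aleph_2(N)$ large and $m\ge\aleph_1(N,\vec\alpha)$, is what I expect to pin down $\mu(N)\in(0,1/2)$ and close the argument. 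This scale-balancing, entirely driven by the explicit exponent $N-1$ (or $N$ in the augmented version) on $d$ in the Vandermonde-type bound of Lemma~\ref{l31}, constitutes the technical core of the proof.
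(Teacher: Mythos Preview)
Your single--point strategy cannot close, and the difficulty you flag is not a technicality that a secondary pigeonhole in $w$ can repair: it is structural. After restricting to a $w$-bin of width $2^{-\rho m}$, the lower bound $|\partial_t B(x^*,\cdot)|\gtrsim 1$ forces the surviving $l$'s to lie in an interval of length $\lesssim 2^{-\rho m}$; on such an interval the $M\gtrsim 2^{(1/2-\mu-2\epsilon-\rho)m}$ grid points (spacing $2^{-m/2}$) admit a $2^{N+1}$-tuple with separation at best $d\lesssim 2^{-(\mu+2\epsilon+\rho)m}$, while $A_0\sim 2^{-\rho m}$. Hence $d^N/A_0\lesssim 2^{-((N-1)\rho+N\mu+2N\epsilon)m}\to 0$ for every $N\ge 1$ and every admissible $\rho>0$, so the inequality $d^N\lesssim A_0$ coming from Lemma~\ref{l31} is always \emph{satisfied}, never contradicted. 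The same obstruction kills the bound on $|\tilde a_k|=|a_k(x^*)|$: one only recovers $|a_k(x^*)|\lesssim 1$, which was assumed. The underlying reason is that at a fixed $x^*$ the relation $B(x^*,x^*+l)\approx w(l)^{-1}$ merely says that a bi-Lipschitz function is tracked by nearby grid values, which carries no information.

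The missing idea, and what the paper actually does, is to vary $x$ rather than (only) $l$. One first extracts a \emph{common} set of $x$'s: using the intersection lemma (Lemma~\ref{la1}) on the large sets $E_l$ for $\sim 2^{2N}$ well-separated $l$'s, one finds a set $X_1$ with $|X_1|\gtrsim 2^{-O_N(\epsilon)m}$ such that $|B(x,x+l)-w_l^{-1}|\le \delta$ holds for \emph{all} $x\in X_1$ and all chosen $l$. Since $w_l^{-1}$ is independent of $x$, subtracting at two points $x,x'\in X_1$ with quantified separation $|x-x'|$ eliminates the unknown $w_l^{-1}$ entirely and yields $|B(x,x+l)-B(x',x'+l)|\lesssim\delta$. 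A Taylor expansion in $x$, followed by Lemma~\ref{l31} in the $l$-variable (now with genuinely small right-hand side), controls differences $|a_k(x)-a_k(x')|$ and, crucially, the lowest-order coefficient itself; iterating this step $N$ times on nested subsets of $X_1$ drives every $|a_k(x)|$ down to $\lesssim C_{\vec\alpha,N}2^{-\varepsilon_1 m}$, contradicting $|\partial_t B|\gtrsim 1$. Your pigeonhole-in-$x$ should therefore be replaced by this set-intersection step, and the application of Lemma~\ref{l31} should target the $x$-difference $B(x,\cdot)-B(x',\cdot)$ rather than the raw value $B(x^*,\cdot)-w^{-1}$.
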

\begin{rem}\label{rr1}
$``\aleph_1(N,\vec{\alpha})"$ in (\ref{lower}) can be  replaced by $``\aleph_1(N,\alpha_N)"$   when
$\alpha_i\in \N$ for $1\le i\le N$. In particular,
it can be replaced by
$``\aleph_1(N)"$ in our following proof.  Besides, 
 The restriction   $F=\mathcal{F}^{-1}$ is crucial  in this lemma,  we do not know that   whether this restriction  can be relaxed.
\end{rem}
\begin{proof}
To prove the desired estimate, we will use reduction ad absurdum. We assume that
\begin{equation}\label{As1}
\sharp\mathfrak{H}_{B,\epsilon}^1\ge  2^{(\frac{1}{2}-\mu)m},
\end{equation}
where $\frac{1}{2}-\mu\ge 2^{8N+2}\epsilon$. Our strategy is to prove that $\inf_{x\in Y}|\frac{\p}{\p t} B(x,t)|$ is smaller than any given positive constant for certain $Y\subset X$.
\vskip.1in
Denote $\lambda_{m}^{\epsilon,\mu}:=2^{2^{4N}2\epsilon m+\mu m}$.
We first construct   a sparse set $\mathfrak{H}_{B,\epsilon}^{1,1}\subset\mathfrak{H}_{B,\epsilon}^1$, which satisfies
\begin{equation}\label{abc1}
\sharp \mathfrak{H}_{B,\epsilon}^{1,1}
\thicksim\lambda_{m}^{\epsilon,0},\ \ \tilde{d}:=\inf_{\rho_1,\rho_2\in \mathfrak{H}_{B,\epsilon}^{1,1}}|\rho_1-\rho_2|\gtrsim (\lambda_{m}^{\epsilon,\mu})^{-1}.
\end{equation}
(Obviously, $\tilde{d}\lesssim1$). Indeed, set $\vartheta_i:=[i,i+1](\lambda_{m}^{\epsilon,\mu})^{-1}$, 
we have
$\sharp(\{\vartheta_i\cap 2^{-\frac{m}{2}}\Z\})\thicksim 2^\frac{m}{2}(\lambda_{m}^{\epsilon,\mu})^{-1},$
which, with  (\ref{As1}),
leads to that  there are at least $\thicksim\lambda_{m}^{\epsilon,0}$ intervals denoted by $\{\vartheta_{j_l}\}_{l=1}^{\mathfrak{M}_1}$ ($\mathfrak{M}_1\gtrsim \lambda_{m}^{\epsilon,0}$,  $j_1<j_2<\cdot\cdot\cdot<j_{\mathfrak{M}_1}$) such that $\vartheta_{j_l}\cap \mathfrak{H}_{B,\epsilon}^1\neq\varnothing$ for all $1\le l\le \mathfrak{M}_1$.
Denote $\mathfrak{S}_l:=\vartheta_{j_l}\cap \mathfrak{H}_{B,\epsilon}^{1}$,
choosing  any   point in  every  set  $\mathfrak{S}_{l}$ with odd $l$ yields  the desired set $\mathfrak{H}_{B,\epsilon}^{1,1}$.
\vskip.1in
Define
$|A_{B,\epsilon}(l,w_l)|=\max_{w:(l,w)\in \mathfrak{H}_{B,\epsilon}}|A_{B,\epsilon}(l,w)|,$
applying  Lemma \ref{la1} with   $I_l:=A_{B,\epsilon}(l,w_l)$,
$n:=2^{2N}$, $K:=2^{2\epsilon m}$ and $\tilde{N}:=2^{2^{4N}2\epsilon m}$, we gain that  
there exist $\mathfrak{H}_{B,\epsilon}^{1,1,1}\subset \mathfrak{H}_{B,\epsilon}^{1,1}$ with $\sharp \mathfrak{H}_{B,\epsilon}^{1,1,1}=n$ and $X_1:=\cap_{l\in \mathfrak{H}_{B,\epsilon}^{1,1,1}}I_l$ with $|X_1|\ge 2^{-1}K^{-n}$ such that for all $l\in  \mathfrak{H}_{B,\epsilon}^{1,1,1}$ and $x\in X_1$, we have
\begin{equation}\label{aa2}
|B(x,x+l)-w_l^{-1}|
\le 2^{-(1/2-2\epsilon)m}.
\end{equation}
 We next construct a sparse subset $X_1^1$ of $X_1$. 
Let  $E$ be  the collection of same length (= $2^{-\frac{m}{4}}$) intervals which partition   $[-\mathfrak{C}_1,\mathfrak{C}_1]$ and are mutually disjoint.
Note that the number of the intervals in $E$ is in $[\mathfrak{C}_1 2^{\frac{m}{4}+1},\mathfrak{C}_1 2^{\frac{m}{4}+1}+1]$.   Let $E_1$ be the collection of intervals $J\in E$  such that
$|X_1\cap J|\ge (2\mathfrak{C}_1)^{-1}2^{-4}K^{-n}|J|.$
Denote  $X_1^1:=\cup_{J\in E_1}(X_1\cap J)$,  we claim 
\begin{equation}\label{1.1}
|X_1'|\ge \frac{|X_1|}{2}\ge 2^{-2}K^{-n}.
\end{equation}
Actually, (\ref{1.1}) follows  since 
$$|X_1|\le\sum_{J\in E_1}|X_1\cap J|+\sum_{J\in E\setminus E_1}|X_1\cap J|
=|X_1'|+\sum_{J\in E\setminus E_1}|X_1\cap J|
$$
and
$$\sum_{J\in E\setminus E_1}|X_1\cap J|
\le\ (2\mathfrak{C}_1)^{-1} 2^{-4}K^{-n} 2^{-m/4} \sharp\{J:\ J\in E\setminus E_1\}\le\ 2^{-4}K^{-n}\le \frac{|X_1|}{2}.$$
We conclude  that for  $y\in X_1'$, there exist $y'$ and $J$ such that $y'\in J \in E_1$  and
$ |J|\ge|y-y'|\ge (2\mathfrak{C}_1)^{-1}2^{-5}K^{-n}|J|.$
Thus, for any $x\in X_1^1$, applying Lemma \ref{l31} to (\ref{aa2}), we deduce that 
for all $1\le j\le N$,
\begin{equation}\label{aa3}
|a_j(x)|\le \frac{\tilde{C}^{N((\alpha_N-\alpha_1)+\max_{1\le j\le N}|\alpha_j|+2)}(1+C_5)}{\tilde{d}^{N-1} \inf_{j}\prod_{1=i\neq j}^N|\alpha_j-\alpha_i|}:=\frac{C_{\vec{ \alpha},N}}{\tilde{d}^{N-1}},
\end{equation}
in which $C_{\vec{\alpha},N}$ (which may vary at each appearance below),
and there exists $x'\in X_1'$ satisfying
\begin{equation}\label{sparse} (2\mathfrak{C}_1)^{-1}2^{-5}K^{-n}|J|\le|x-x'|\le |J|
\end{equation}
such that  for all $l\in \mathfrak{H}_{B,\epsilon}^{1,1,1}$,
$|B(x,x+l)-B(x',x'+l)|
\le 2^{1-(1/2-2\epsilon)m},$
namely,
$|\sum_{k=1}^Na_k(x)(F(x+l))^{\alpha_k}-\sum_{k=1}^Na_k(x')
(F(x'+l))^{\alpha_k}|\le  2^{1-(1/2-2\epsilon)m}.$
Now,
taking advantage  of Taylor's  expansion
$$(F(x'+l))^{\alpha_k}-(F(x+l))^{\alpha_k}
=\alpha_k (x'-x)F'(x+l)F(x+l)^{\alpha_k-1}+O(|x-x'|^2),$$
where $O(|x-x'|^2)$ means a term $\lesssim_{\alpha_k,C_1,C_2}|x-x'|^2$,
and using (\ref{aa3}),
we can obtain
\begin{equation}\label{set1}
\begin{aligned}
&\ \ |\sum_{k=1}^N[a_k(x)-a_k(x')](F(x+l))^{\alpha_k}-
\sum_{k=1}^N\alpha_k a_k(x') (x-x')F'(x+l)F(x+l)^{\alpha_k-1}|\\
\lesssim&\  2^{-(1/2-2\epsilon)m}+\frac{C_{\vec{ \alpha},N}}{\tilde{d}^{N-1}}O(|x-x'|^2).
\end{aligned}
\end{equation}
Thanks to (\ref{aa3}) and the upper bound of $|x-x'|$, we deduce from (\ref{set1}) that
\begin{equation}\label{set2}
\begin{aligned}
|\sum_{k=1}^N[a_k(x)-a_k(x')](F(x+l))^{\alpha_k}|
\lesssim&\  2^{-(1/2-2\epsilon)m}+\frac{C_{\vec{ \alpha},N}}{\tilde{d}^{N-1}}O(|x-x'|)+\frac{C_{\vec{ \alpha},N}}{\tilde{d}^{N-1}}O(|x-x'|^2)\\
\lesssim&\ 2^{-(1/2-2\epsilon)m}+\frac{C_{\vec{ \alpha},N}}{\tilde{d}^{N-1}}O(|x-x'|).
\end{aligned}
\end{equation}
Applying Lemma \ref{l31} to (\ref{set2}), we have  for $1\le k\le N$,
\begin{equation}\label{az1}
|a_k(x)-a_k(x')|\lesssim\ \frac{C_{\vec{\alpha},N}}{\tilde{d}^{N-1}} \big(2^{-(1/2-2\epsilon)m}+\frac{C_{\vec{ \alpha},N}}{\tilde{d}^{N-1}}O(|x-x'|)\big).
\end{equation}
Note that we can not directly use Lemma \ref{l31} to the object on the left side
of (\ref{set1}) because the coefficient $F'(x+l)$ depends on $l$. However, thanks to (\ref{perty}),  $F'(x+l)=\frac{1}{\mathcal{F}'(F(x+l))}$ and the fact that  $\mathcal{F}$ is a polynomial of degree not more than $N$, we can  get around  this  barrier. Multiplying (\ref{set1}) by $\mathcal{F}'(F(x+l))$,  using $|\mathcal{F}'(F(x+l))|\le C_1$ and the fact that  $\mathcal{F}'$ is a  polynomial of degree not more  than $N-1$, we  obtain
\begin{equation}\label{set11}
\begin{aligned}
&\ \ |\sum_{k=1}^N[a_k(x)-a_k(x')](F(x+l))^{\alpha_k}\mathcal{F}'(F(x+l))-
\sum_{k=1}^N\alpha_k a_k(x') (x-x')F(x+l)^{\alpha_k-1}|\\
\lesssim&\  2^{-(1/2-2\epsilon)m}+\frac{C_{\vec{ \alpha},N}}{\tilde{d}^{N-1}}O(|x-x'|^2).
\end{aligned}
\end{equation}
Observe that the lowest power on the left side of (\ref{set11}) is $\alpha_1-1$, it follows by Lemma \ref{l31} and (\ref{sparse}) that
\begin{equation}\label{e10}
\begin{aligned}
|a_1(x')|
\lesssim&\ \frac{C_{\vec{\alpha},N}}{|x-x'|\tilde{d}^{N^2-1}}( 2^{-(1/2-2\epsilon)m}+\frac{1}{\tilde{d}^{N-1}}O(|x-x'|^2))\\
\lesssim&\  \frac{C_{\vec{\alpha},N}}{\tilde{d}^{N^2+N}} 2^{-(1/4-2\epsilon)m}2^{2^{2N}2\epsilon m},
\end{aligned}
\end{equation}
which, with (\ref{az1}), implies
$
|a_1(x)|+|a_1(x')|
\lesssim\  \frac{C_{\vec{\alpha},N}}{\tilde{d}^{N^2+N}} 2^{-(1/4-2\epsilon)m}2^{2^{2N}2\epsilon m}.
$
So
$$|\sum_{k=2}^Na_k(x)(F(x+l))^{\alpha_k}-\sum_{k=2}^Na_k(x')
(F(x'+l))^{\alpha_k}|\lesssim \frac{C_{\vec{\alpha},N}}{\tilde{d}^{N^2+N}} 2^{-(1/4-2\epsilon)m}2^{2^{2N}2\epsilon m}.
$$
Repeating the above process $N-1$ times, we get
for any $1\le k\le N$, $x\in X_1^N$ with $X_1^N\subset X_1^{N-1}\subset\cdot\cdot\cdot\subset X_1^1\subset X_1$
and $|X_1^N|\ge \frac{|X_1|}{2^{N+1}}$ that
$$
\begin{aligned}
|a_k(x)|+|a_k(x')|
\lesssim&\  \frac{C_{\vec{\alpha},N}}{\tilde{d}^{2N^2k}} 2^{-(\frac{1}{2^{k+1}}-2\epsilon)m}2^{2^{2N}2k\epsilon m}.
\end{aligned}
$$
Utilizing  (\ref{abc1}), we deduce
$$|a_k(x)|\lesssim C_{\vec{\alpha},N}(2^{2^{4N}2\epsilon m+\mu m})^{2N^2k} 2^{-(\frac{1}{2^{k+1}}-2\epsilon)m}2^{2^{2N}2k\epsilon m}.$$
Thanks to the conditions on $m$ and $\epsilon$ in (\ref{lower}), the right side is
$\lesssim\ C_{\vec{\alpha},N}2^{-\varepsilon_1 m}$
for certain positive constant $\varepsilon_1=\varepsilon_1(N)$. This gives
$$1\stackrel{(\ref{lower})}{\lesssim}\inf_{x\in X}|\frac{\p}{\p t} B(x,t)|\le
\inf_{x\in X_1^N}|\frac{\p}{\p t} B(x,t)|\lesssim\ C_{\vec{\alpha},N}2^{-\varepsilon_1 m}$$
  which yields a contradiction.  Therefore,
  (\ref{As1}) does not hold, which
   completes the proof.
\end{proof}

\vskip.3in
\section{Proof of Theorem \ref{t2} for $\sigma=1$: $L^p$ estimate of $\h_{\De_1}f$}
\label{ll}
In this section, we prove Theorem \ref{t2} for $\sigma=1$.
Recall
$$
\De_1=\{(m,n)\in\Z^2:\ m\le 0,\ n\le 0\},\
M_j(\xi,\eta)=\int e(\xi P(2^{-j}t)+\eta u(x)2^{-j}t)\rho(t)dt
$$
and
$$M_{j,\De_1}
(\xi,\eta)=\sum_{(m,n)\in \De_1}\sum_{k\in\Z}\widehat{\Phi}(\frac{\xi}{2^{jl+m}})
\widehat{\Phi}(\frac{\eta}{2^{k}})
\widehat{\Phi}(\frac{u(x)}{2^{j-k+n}})M_j(\xi,\eta).$$
Denote
\begin{equation}\label{dff1}
\wp:=\{(n_1,n_2)\in \Z^2:\ n_1\ge 0,\ n_2\ge 0,\ n_1+n_2>0\},\ \ \widehat{\Phi_{n_3}}(\xi):=\xi^{n_3}\widehat{\Phi}(\xi),\ n_3\in\Z.
\end{equation}
Since $(m,n)\in \De_1$,
 $j\in S_l$, the support of $\widehat{\Phi}(\cdot)$, via  Taylor's expansions
$$e(\xi P(2^{-j}t))=\sum_{q\ge 0}\frac{i^q}{q!}(\xi P(2^{-j}t))^q=2^{mq}\sum_{q\ge 0}\frac{i^q}{q!}(\frac{\xi}{2^{m+jl}})^q(2^{jl} P(2^{-j}t))^q$$
and
$$e(\eta u(x)2^{-j}t)=\sum_{\upsilon\ge 0}\frac{i^\upsilon}{\upsilon!}(\eta u(x)2^{-j}t)^\upsilon=2^{n\upsilon}\sum_{\upsilon\ge 0}\frac{i^\upsilon}{\upsilon!}(\frac{\eta}{2^k})^\upsilon
(\frac{u(x)}{2^{j-k+n}})^\upsilon t^\upsilon,$$
we have  
$$
\begin{aligned}
M_{j,\De_1}
(\xi,\eta)=&\  \sum_{(m,n)\in \De_1}\sum_{k\in\Z}\sum_{(q,\upsilon)\in \wp}
\frac{i^{q+\upsilon}}{q!\upsilon!} 2^{mq+n\upsilon}
\widehat{\Phi_q}(\frac{\xi}{2^{jl+m}})
\widehat{\Phi_\upsilon}(\frac{u(x)}{2^{j-k+n}})
\widehat{\Phi_\upsilon}(\frac{\eta}{2^{k}}) \gamma_{j,q,\upsilon},
\end{aligned}
$$
where
$\gamma_{j,q,\upsilon}:=\int_\R (2^{jl}P(2^{-j}t))^q t^\upsilon \rho(t)dt.$
It is not hard to see the  tricky cases are $q=0$ and  $\upsilon=0$, which make us
 split the summation of $(q,\upsilon)$ into three parts:
$\sum_{q=0,\upsilon\ge 1}+\sum_{\upsilon=0,q\ge 1}+\sum_{\upsilon\ge 1,q\ge 1}.$
 In the following, we  only give the detailed proof of   the former two terms since it is easier to dominate the third term.
 \vskip.1in
  Denote
$$
M_{j,\De_1,1}(\xi,\eta):=\sum_{(m,n)\in \De_1}\sum_{k\in\Z}\sum_{\upsilon\ge1}
\frac{i^{\upsilon}}{\upsilon!} 2^{n\upsilon}
\widehat{\Phi}(\frac{\xi}{2^{jl+m}})
\widehat{\Phi_\upsilon}(\frac{u(x)}{2^{j-k+n}})
\widehat{\Phi_\upsilon}(\frac{\eta}{2^{k}}) \gamma_{j,0,\upsilon}
$$
and
$$M_{j,\De_1,2}(\xi,\eta):=\sum_{(m,n)\in \De_1}\sum_{k\in\Z}\sum_{q\ge 1}
\frac{i^{q}}{q!} 2^{mq}
\widehat{\Phi_q}(\frac{\xi}{2^{jl+m}})
\widehat{\Phi}(\frac{u(x)}{2^{j-k+n}})
\widehat{\Phi}(\frac{\eta}{2^{k}}) \gamma_{j,q,0},$$
it suffices to show that for $\sigma=1,2$,
\begin{equation}\label{A1}
\|\h_{\De_1,\sigma}f\|_p\lesssim_N \|f\|_p,
\end{equation}
where
$$
\h_{\De_1,\sigma}f(x,y):=\int_{\xi,\eta}\widehat{f}(\xi,\eta)
e(\xi x+\eta y) \sum_{j\in S_l} M_{j,\De_1,\sigma}(\xi,\eta)d\xi d\eta.
$$
\subsection{The estimate of $\h_{\De_1,1}f(x,y)$ }
We will prove (\ref{A1}) for $\sigma=1$.
 By dual arguments, it is enough to show that for all $g\in L^{p'}(\R^2)$,
 \begin{equation}\label{D2}
|\langle \h_{\De_1,1}f,g \rangle|
\lesssim_N\ \|f\|_p\|g\|_{p'}.
\end{equation}
Denote
\begin{equation}\label{df2}
\widehat{\hbar}(\xi):=\sum_{m\le 0}\widehat{\Phi}(\frac{\xi}{2^m}),\  \hbar_s(x):=2^s\hbar(2^sx), \ \Phi_{\upsilon,k}(x):=2^k\Phi_{\upsilon}(2^kx),\  \Psi_k(y):=(\sum_{|k'-k|\le1}\widehat{\Phi}(2^{-k'}\eta)
)^{\vee}(-y),
\end{equation}
where $|\Phi_{\upsilon}(x)|\lesssim\ 2^{2\upsilon}(1+|x|^2)^{-1}$,
it follows by Fourier inverse transform, $|\gamma_{j,0,\upsilon}|\lesssim 2^\upsilon$ and  H\"{o}lder's inequality that
\begin{equation}\label{123}
\begin{aligned}
{\rm LHS\  of\  (\ref{D2})}\lesssim&\ \sum_{n\le 0}\sum_{\upsilon\ge 1}
\frac{2^{n\upsilon}}{\upsilon!}
|\langle\sum_{j\in S_l}\sum_{k\in\Z}
\widehat{\Phi_\upsilon}(\frac{u(x)}{2^{j-k+n}}) \gamma_{j,0,\upsilon}
\hbar_{jl}*_x \Phi_{\upsilon,k}*_y f,\
\Psi_k*_yg
\rangle|\\
\lesssim&\ \sum_{n\le 0}\sum_{\upsilon\ge 1}
\frac{2^{(n+1)\upsilon}}{\upsilon!}
\Big\|\big(\sum_{j\in S_l}\sum_{k\in\Z}|\widehat{\Phi_\upsilon}
(\frac{u(x)}{2^{j-k+n}})| |\hbar_{jl}*_x \Phi_{\upsilon,k}*_y f|^2\big)^\frac{1}{2}\Big\|_p\\
&\ \times \Big\|\big(\sum_{j\in S_l}\sum_{k\in\Z}|\widehat{\Phi_\upsilon}
(\frac{u(x)}{2^{j-k+n}})| | \Psi_k*_yg|^2\big)^\frac{1}{2}\Big\|_{p'}.
\end{aligned}
\end{equation}
Because of $\sum_{j\in S_l}|\widehat{\Phi}_\upsilon
(\frac{u(x)}{2^{j-k+n}})|\lesssim 2^\upsilon$,  by Littlewood-Paley theorem, the norm $\|\cdot\|_{p'}$ on the right side of (\ref{123}) is $\lesssim 2^{\upsilon/2}\|g\|_{p'}$.
In what follows, $M^{(1)}$ and $M^{(2)}$ denote the Hardy-Littlewood Maximal operators applied in the first  variable and the second variable, respectively.
Note that  $\hbar_{jl}*_x f\lesssim M^{(1)}f$.  Then  we bound  the norm $\|\cdot\|_p$ on the right side of (\ref{123})   by
$$
\begin{aligned}
\Big\|\big(\sum_{j\in S_l}\sum_{k\in\Z}|\widehat{\Phi}_\upsilon
(\frac{u(x)}{2^{j-k+n}})| |M^{(1)}( \Phi_{\upsilon,k}*_y f)|^2\big)^\frac{1}{2}\Big\|_p
\lesssim&\ 2^{\upsilon/2}\Big\|\big(\sum_{k\in\Z} |M^{(1)}( \Phi_{\upsilon,k}*_y f)|^2\big)^\frac{1}{2}\Big\|_p\\
\lesssim&\ 2^{\upsilon/2}\Big\|\big(\sum_{k\in\Z} | \Phi_{\upsilon,k}*_y f|^2\big)^\frac{1}{2}\Big\|_p\\
\lesssim&\ 2^{5\upsilon/2}\Big\|\big(\sum_{k\in\Z} | M^{(2)} (\tilde{\Psi}_k*_yf)|^2\big)^\frac{1}{2}\Big\|_p\\
\lesssim&\ 2^{5\upsilon/2}\|f\|_p,
\end{aligned}
$$
where $\tilde{\Psi}_k(y):=\Psi_k(-y)$,
 Fefferman-Stein's inequality and Littlewood-Paley theorem are applied.   Inserting these estimates into (\ref{123}) leads to that 
 ${\rm LHS\  of\  (\ref{D2})}\lesssim \|f\|_p\|g\|_{p'}\sum_{n\le 0}\sum_{\upsilon\ge 1}
\frac{2^{(n+1)\upsilon}8^\upsilon}{\upsilon!}.$
 Therefore, the proof of (\ref{D2}) is completed. 
\subsection{The estimate of $\h_{\De_1,2}f(x,y)$ }
We will prove (\ref{A1}) for $\sigma=2$.
Using $\sum_{n\le 0}\widehat{\Phi}(\frac{u(x)}{2^{j-l+n}})
=\widehat{\hbar}(\frac{u(x)}{2^{j-l}}),$
we have
$$M_{j,\De_1,2}(\xi,\eta)=\sum_{m\le0}
\sum_{k\in\Z}\sum_{q\ge 1}
\frac{i^{q}}{q!} 2^{mq}
\widehat{\Phi}_q(\frac{\xi}{2^{jl+m}})
\widehat{\hbar}(\frac{u(x)}{2^{j-k}})
\widehat{\Phi}(\frac{\eta}{2^{k}}) \gamma_{j,q,0}.$$
We remark that at this point it is different from the previous case since the summation of $j$ can not be absorbed by $\widehat{\hbar}(\frac{u(x)}{2^{j-k}})$ any more.
Our strategy is to make full use of   $\widehat{\Phi}_q(\frac{\xi}{2^{jl+m}})$.
\vskip.1in
Analogously,
  dual arguments gives that  it suffices to prove for all $g\in L^{p'}$,
 \begin{equation}\label{D3}
|\langle \h_{\De_1,2}f,g \rangle|
\lesssim\ \|f\|_p\|g\|_{p'}.
\end{equation}
Applying Fourier inverse transform,  H\"{o}lder's inequality and Littlewood-Paley theorem, we obtain
$$
\begin{aligned}
{\rm LHS\  of\ } (\ref{D3})\lesssim&\ \sum_{m\le 0}\sum_{q\ge 1}\frac{2^{mq}}{q!}
|\langle\sum_{j\in S_l}\sum_{k\in\Z}
\widehat{\hbar}(\frac{u(x)}{2^{j-k}}) \gamma_{j,q,0}
\Phi_{q,jl+m}*_x \Phi_{k}*_y f,\
\Psi_{k}*_yg
\rangle|\\
\lesssim&\ \Big\|\big(\sum_{k\in\Z}|\Psi_{k}*_yg|^2
\big)^\frac{1}{2}\Big\|_{p'}\sum_{m\le 0}\sum_{q\ge 1}\frac{2^{mq}}{q!}
\beth_{m,q}\lesssim\ \|g\|_{p'}\sum_{m\le 0}\sum_{q\ge 1}\frac{2^{mq}}{q!} \beth_{m,q},
\end{aligned}
$$
where
$$\beth_{m,q}:=\Big\|\big(\sum_{k\in\Z}|\sum_{j\in S_l}\widehat{\hbar}(\frac{u(x)}{2^{j-k}}) \gamma_{j,q,0}
\Phi_{q,jl+m}*_x \Phi_{k}*_y f|^2\big)^\frac{1}{2}\Big\|_{p}.$$
So it is enough to prove
$
\beth_{m,q}\lesssim\ 100^q\|f\|_p.
$
Density arguments yields that it suffices to prove for any $M_1>0$, measurable functions $ z(x)\in \Z$ and $Z(x)\in \Z$,
\begin{equation}\label{D5}
\tilde{\beth}_{m,q}\lesssim\ 100^q\|f\|_p,
\end{equation}
where
$$\tilde{\beth}_{m,q}:=\Big\|\big(\sum_{|k|\le M_1}|\sum_{j\in [z(x),Z(x)]}\chi_{S_l}(j)\widehat{\hbar}(\frac{u(x)}{2^{j-k}}) \gamma_{j,q,0}
\Phi_{q,jl+m}*_x \Phi_{k}*_y f|^2\big)^\frac{1}{2}\Big\|_{p}.$$
Note that if $z(x)=Z(x)$, (\ref{D5}) is immediately  achieved by applying Fefferman-Stein's inequality and Littlewood-Paley theorem. In what follows, we assume $z(x)\le Z(x)-1$. As the previous statement,
here we want to make full use of  $\Phi_{q,jl+m}$ to absorb the summation of $j$.
\vskip.1in
Recall  the process of  Abel summation, that is, denote $S_n=\sum_{\kappa\le n}J_\kappa$, we have
\begin{equation}\label{E1}
\begin{aligned}
 \sum_{\kappa=z(x)}^{Z(x)} H_\kappa J_\kappa
 =&\  \sum_{\kappa=z(x)}^{Z(x)} H_\kappa (S_\kappa-S_{\kappa-1})
 = \sum_{\kappa=z(x)}^{Z(x)} H_\kappa S_\kappa
 - \sum_{\kappa=z(x)-1}^{Z(x)-1} H_{\kappa+1} S_\kappa\\
 =&\ H_{Z(x)}S_{Z(x)}+ \sum_{\kappa=z(x)}^{Z(x)-1} (H_{\kappa}-H_{\kappa+1}) S_\kappa-H_{z(x)}S_{z(x)-1}.
\end{aligned}
\end{equation}
Applying (\ref{E1}) with
$H_j:=\widehat{\hbar}(\frac{u(x)}{2^{j-k}}),\ \ J_j:=\Phi_{q,jl+m}\chi_{S_l}(j)
\gamma_{j,q,0},$
the first and third term can be bounded by the same way as yielding the desired estimate for  the case $z(x)=Z(x)$. So we only pay attention to the second term.  That is, to prove (\ref{D5}), it suffices to show
 \begin{equation}\label{D6}
\tilde{\beth}_{m,q}^{(2)}\lesssim\ 100^q\|f\|_p,
\end{equation}
where
$$\tilde{\beth}_{m,q}^{(2)}:=
\Big\|\big(\sum_{|k|\le M_1}|\sum_{j\in [z(x),Z(x)-1]}\big(\widehat{\hbar}(\frac{u(x)}{2^{j-k}})
-\widehat{\hbar}(\frac{u(x)}{2^{j+1-k}})
\big)
\bar{\hbar}_{q,jl+m}*_x \Phi_{k}*_y f|^2\big)^\frac{1}{2}\Big\|_{p},$$
in which  $\bar{\hbar}_{q,jl+m}:=\sum_{j'\le j}\Phi_{q,j'l+m}\chi_{S_l}(j')\gamma_{j',q,0}$. 
Hence,  to prove (\ref{D6}), it suffices to show
\begin{equation}\label{D7}
\Im(x):=\sup_{k}\sum_{j\in [z(x),Z(x)-1]}|\widehat{\hbar}(\frac{u(x)}{2^{j-k}})
-\widehat{\hbar}(\frac{u(x)}{2^{j+1-k}})
|\lesssim1.
\end{equation}
As a matter of fact, denote $\bar{\hbar}_{q,\infty}:=\sum_{j'\in\Z}\Phi_{q,j'l+m}\chi_{S_l}(j')\gamma_{j',q,0}$,
we have via (\ref{D7}), a variant of Cotlar's inequality,  Fefferman-Stein's inequality and Littlewood-Paley theorem that
$$
\begin{aligned}
 \tilde{\beth}_m^{(2)}\lesssim&\ 2^{3q}
 \Big\|\left(\sum_{|k|\le M_1}\left|
 \sup_{j\in [z(x),Z(x)]}\big|
\bar{\hbar}_{q,jl+m}*_x \Phi_{k}*_y f\big|\right|^2\right)^\frac{1}{2}\Big\|_{p}\\
\lesssim&\ 2^{3q}\Big\|\left(\sum_{|k|\le M_1}\left|
M^{(1)} (\Phi_{k}*_y f)\right|^2\right)^\frac{1}{2}\Big\|_{p}
+2^{3q}\Big\|\left(\sum_{|k|\le M_1}\left|
M^{(1)} (\bar{\hbar}_{q,\infty}*_x \Phi_{k}*_y f)\right|^2\right)^\frac{1}{2}\Big\|_{p}\\
\lesssim&\ 2^{3q}\|f\|_p.
\end{aligned}
$$
Thus it remains to prove (\ref{D7}).  By the fundamental theorem of calculus, we have
$$
\begin{aligned}
|\widehat{\hbar}(\frac{u(x)}{2^{j-k}})
-\widehat{\hbar}(\frac{u(x)}{2^{j+1-k}})|
=&\ |\int_0^1\frac{d}{ds}
\big(\widehat{\hbar}(\frac{u(x)}{2^{j-k}}s
+\frac{u(x)
}{2^{j+1-k}}(1-s)) \big)ds|\\\
=&\ |\frac{u(x)}{2^{j+1-k}}|\int_0^1|\widehat{\hbar}'
\big(\frac{u(x)}{2^{j+1-k}}(1+s)
\big)|ds
\lesssim\ \frac{|\frac{u(x)}{2^{j+1-k}}|
}{1+|\frac{u(x)}{2^{j+1-k}}|^2},
\end{aligned}
$$
Hence, (\ref{D7}) follows since
 $\sum_{j\in\Z}\frac{|\frac{u(x)}{2^{j+1-k}}|
}{1+|\frac{u(x)}{2^{j+1-k}}|^2}\lesssim1$. This completes the proof of (\ref{A1}).
\section{Proof of Theorem \ref{t2} for $\sigma=2,3$: $L^p$ estimates of $\h_{\De_2}f$ and $\h_{\De_3}f$}
\label{lh}
In this section, Theorem \ref{t2} for $\sigma=2,3$ will be proved. As the previous explanation, we only give the details for $\sigma=2$.
\subsection{The estimate of $\h_{\De_2}f$}
Without loss of generality, we assume that $m>n+100l$ and $n>0$. Remember
$$\phi_{j,\xi,\eta,x}(t)=\xi P(2^{-j}t)+\eta u(x)2^{-j}t,\ M_j(\xi,\eta)=\int_\R e(\phi_{j,\xi,\eta,x}(t)) \rho(t) dt$$
and the definition of $M_{j,\De_2}(\xi,\eta)$
$$
M_{j,\De_2}
(\xi,\eta)=\sum_{(m,n)\in \De_2}\sum_{k\in\Z}\widehat{\Phi}(\frac{\xi}{2^{jl+m}})
\widehat{\Phi}(\frac{\eta}{2^{k}})
\widehat{\Phi}(\frac{u(x)}{2^{j-k+n}})M_j(\xi,\eta).
$$
we  see that the phase function $\phi_{j,\xi,\eta,x}(t)$ does not have a critical point since
$m>n+100l$. In fact,  $|\phi_{j,\xi,\eta,x}'(t)|$ owns a large lower bound,  that is,
$$
\begin{aligned}
|\phi_{j,\xi,\eta,x}'(t)|=&\ |\xi 2^{-j}P'(2^{-j}t)+\eta u(x)2^{-j}|\\
\ge&\  2^m|2^{-jl-m}\xi||2^{j(l-1)}P'(2^{-j}t)|-
2^n|2^{-k}\eta||2^{-(j-k+n)}u(x)|\\
\gtrsim_l&\  2^m,
\end{aligned}
$$
which prompts us to employ integration by parts.
\vskip.1in
Motivated by the above analysis,  we have via utilizing  integration by parts
$$
\begin{aligned}
M_j(\xi,\eta)=&\ -i\int \frac{1}{\phi_{j,\xi,\eta,x}'(t)}\frac{d}{dt}(e^{i\phi_{j,\xi,\eta,x}(t)})
\rho(t)dt\\
=&\ i\int \frac{1}{\phi_{j,\xi,\eta,x}'(t)}e^{i\phi_{j,\xi,\eta,x}(t)}
\rho'(t)dt+i\int \frac{\phi_{j,\xi,\eta,x}''(t)
}{(\phi_{j,\xi,\eta,x}'(t))^2}e^{i\phi_{j,\xi,\eta,x}(t)}
\rho(t)dt.
\end{aligned}
$$
Since the above two terms can be homoplastically treated, we only focus on the first term. Taylor's expansion provides
$$
\begin{aligned}
\frac{1}{\phi_{j,\xi,\eta,x}'(t)}=&
\ \frac{1}{\xi 2^{-j}P'(2^{-j}t)}\frac{1}{1+\frac{u(x)\eta}{\xi P'(2^{-j}t)}}
=\ \frac{1}{\xi 2^{-j}P'(2^{-j}t)}\sum_{r=0}^\infty
(-\frac{u(x)\eta}{\xi P'(2^{-j}t)})^r\\
=&\ 2^{-m}\sum_{r=0}^\infty (-1)^r 2^{(n-m)r}(\frac{\xi}{2^{jl+m}})^{-r-1}
(\frac{u(x)}{2^{j-k+n}})^r (\frac{\eta}{2^k})^r
(2^{j(l-1)}P'(2^{-j}t))^{-r-1}.
\end{aligned}
$$
As the statement in section \ref{s1},  Taylor's expansion is used  to obtain the rapid decay.
Recall the definition of  $\widehat{\Phi_r}(\cdot)$ in (\ref{dff1}), and denote 
$\rho_{j,r}(t):=(2^{j(l-1)}P'(2^{-j}t))^{-r-1}\rho'(t)$,
$$
\begin{aligned}
M_{j,\De_2,m,n}(\xi,\eta):=&\ 2^{-m}\sum_{k\in\Z}\sum_{r=0}^\infty (-1)^r 2^{(n-m)r}
\widehat{\Phi_{-r-1}}(\frac{\xi}{2^{jl+m}})
\widehat{\Phi_r}(\frac{\eta}{2^{k}})
\widehat{\Phi_r}(\frac{u(x)}{2^{j-k+n}})\int_\R  e^{i\phi_{j,\xi,\eta,x}(t)}
\rho_{j,r}(t)dt,
\end{aligned}
$$
and
$M_{\De_2,m,n}(\xi,\eta):=\sum_{j\in S_l} M_{j,\De_2,m,n}(\xi,\eta),$
thus, in order to prove (\ref{aim}) for $\sigma=2$, it suffices to prove that
\begin{equation}\label{AA1}
\|\int_{\R^2}
e(\xi x+\eta y) \widehat{f}(\xi,\eta) M_{\De_2,m,n}(\xi,\eta) d\xi d\eta||_p\lesssim\ m^22^{-m}\|f\|_p.
\end{equation}
Denote
$$M_{r,m,n}(\xi,\eta):=\sum_{k\in\Z}\sum_{j\in S_l}
\widehat{\Phi_{-r-1}}(\frac{\xi}{2^{jl+m}})
\widehat{\Phi_r}(\frac{\eta}{2^{k}})
\widehat{\Phi_r}(\frac{u(x)}{2^{j-k+n}})\int_\R  e^{i\phi_{j,\xi,\eta,x}(t)}
\rho_{j,r}(t)dt$$
and
$$T_{r,m,n}f(x,y):=2^{-m}\int_{\R^2}
e(\xi x+\eta y) \widehat{f}(\xi,\eta) M_{r,m,n}(\xi,\eta) d\xi d\eta,$$
 To demonstrate  (\ref{AA1}), it suffices to show that there exists a positive constant $\mathfrak{H}\le 2^{m-n-1}$   such that for all $g\in L^{p'}(\R^2)$,
\begin{equation}\label{A41}
|\langle T_{r,m,n}f,g\rangle |\lesssim\ \frac{m^2}{2^m}\mathfrak{H}^r \|f\|_p\|g\|_{p'}.
\end{equation}
Remember the definition  in (\ref{df2}),
Fourier inverse transform and H\"{o}lder's inequality give
\begin{equation*}
\begin{aligned}
 &\ {\rm LHS\  of}\  (\ref{A41})
 =\
 2^{-m}|\langle\sum_{j\in S_l}\sum_{k\in\Z}
  \widehat{\Phi_r}(\frac{u(x)}{2^{j-k+n}})\\
  &\ \ \  \ \times 
  \int (f*_x\Phi_{-r-1,jl+m}*_y\Phi_{r,k})
  (x-P(2^{-j}t),y-u(x)2^{-j}t) \rho_{j,r}(t)dt,g*_y\Psi_{k}
  \rangle |\\
  \le&\ 2^{-m} \|I\|_{L^{p'}_{x,y}(l^2_{j,k})}
  \|II\|_{L^{p}_{x,y}(l^2_{j,k})},
\end{aligned}
\end{equation*}
where $\{W_{j,k}\}_{j,k}\in l^2_{j,k}$ means $(\sum_{j,k}|W_{j,k}|^2)^{1/2}\lesssim1$,
$I:=\sum_{|j'-j|\le 1}|\widehat{\Phi}(\frac{u(x)}{2^{j'-k+n}})||g*_y\Psi_{k}|$
and
$$II:=\widehat{\Phi_r}(\frac{u(x)}{2^{j-k+n}})\int (f*_x\Phi_{-r-1,jl+m}*_y\Phi_{r,k})
  (x-P(2^{-j}t),y-u(x)2^{-j}t) \rho_{j,r}(t)dt.
 $$
 In what follows, we assume $t\in (1/9,9)$.  By the change of variable $2^{jl}P(2^{-j}t)\to \tau$ (since $j\in S_l$),
we have
$$
II:=\widehat{\Phi_r}(\frac{u(x)}{2^{j-k+n}})\int (f*_x\Phi_{-r-1,jl+m}*_y\Phi_{r,k})
  (x-2^{-jl}\tau),y-u(x)2^{-j}t(\tau)) \rho_{j,r}(t(\tau))t'(\tau)d\tau,
$$
where the function $t(\tau)$ satisfying $t'(\tau)\thicksim1$ is the inverse function of 
$\tau(t)=2^{jl}P(2^{-j}t)$.
Thanks to $\sum_{j\in S_l}(\sum_{|j'-j|\le 1}|\widehat{\Phi}(\frac{u(x)}{2^{j'-k+n}})|)^2\lesssim 1$, we have by Littlewoode-Paley theorem
 $$\|I\|_{L^{p'}_{x,y}(l^2_{j,k})}
 \lesssim\ \|(\sum_{k\in\Z}|g*_y\Psi_{k}|^2)^\frac{1}{2}\|_{p'}
 \lesssim\ \|g\|_{p'}.$$
  For the estimate of $\|II\|_{L^{p}_{x,y}(l^2_{j,k})}$,
since
 $|\int_\R\sup_{j\in S_l} \rho_{j,r}(t(\tau))t'(\tau)d\tau|\lesssim (2l)^{r+1},$
 it suffices to show that for all $\tau_0\in(1/9,9)$,
 \begin{equation}\label{a55}
 \Big\|\|\widehat{\Phi_r}(\frac{u(x)}{2^{j-k+n}}) (f*_x\Phi_{-r-1,jl+m}*_y\Phi_{r,k})
  (x-2^{-jl}\tau_0^l),y-u(x)2^{-j}t(\tau_0^l))\|_{l^2_{j,k}}\Big\|_p\lesssim\ 10^{10r}mn\|f\|_p.
 \end{equation}
 \begin{lemma}\label{l900}
  Fix $x$. For all $\{h_j(y)\}_{j\in \Z}\in L^p_y(l^2_j)$, we have for all $t\in(1/9,9)$,
 \begin{equation}\label{901}
\big\|\|\widehat{\Phi_r}(\frac{u(x)}{2^{j-k+n}})
(h_j*_y \Phi_{r,k})(x,y-u(x)2^{-j}t)\|_{l^2_{j,k}}\big\|_{L^p_y}
\lesssim\ n 10^{8r}\|h_j(y)\|_{L^p_y(l^2_j)}.
 \end{equation}
 \end{lemma}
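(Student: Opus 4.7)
The plan is to exploit the tight localization imposed by $\widehat{\Phi_r}(u(x)/2^{j-k+n})$ to collapse the doubly-indexed $l^2_{j,k}$ sum into a single-indexed vector-valued shifted Littlewood--Paley expression, and then apply the shifted maximal/Littlewood--Paley estimates recorded in the Appendix. First I would use that $\widehat{\Phi_r}(\xi)=\xi^r\widehat{\Phi}(\xi)$ is supported in $|\xi|\in[1/2,2]$ with $\|\widehat{\Phi_r}\|_\infty\lesssim 2^r$: for fixed $x$ and $j$ the multiplier vanishes unless $k=j+n-\log_2|u(x)|+O(1)$, so only an $O(1)$-sized index set $K(j,x)$ contributes. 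Bounding the multiplier by $2^r$ and absorbing these $O(1)$ copies reduces the $l^2_{j,k}$-norm to an $l^2_j$-norm at a single scale $k=k(j)\in K(j,x)$.

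Next I would normalize the shifts. For each pairing $k(j)$ one has $2^{k(j)-j}\sim 2^n/|u(x)|$, so the shift $s_j:=u(x)2^{-j}t$ satisfies $|s_j|\cdot 2^{k(j)}\sim t\cdot 2^n\sim 2^n$ uniformly in $j$. The remaining task is thus to bound
\[
\Big\|\Big(\sum_j \big|(\Phi_{r,k(j)}*_y h_j)(x,y-s_j)\big|^2\Big)^{1/2}\Big\|_{L^p_y},
\]
a vector-valued shifted Littlewood--Paley square function in which every summand has normalized shift parameter $\sim 2^n$ and each bump $\Phi_{r,k(j)}$ has $L^1$-norm $\lesssim 2^{2r}$ with Fourier support essentially at $|\eta|\sim 2^{k(j)}$.

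To finish, I would invoke the vector-valued shifted Littlewood--Paley theorem from the Appendix: for Schwartz $\{g_j\}$ with $\widehat{g_j}$ supported near $|\eta|\sim 2^{k(j)}$ and shifts of uniform normalized size $\tau$, one has
\[
\Big\|\Big(\sum_j |g_j(\cdot-s_j)|^2\Big)^{1/2}\Big\|_{L^p_y}\lesssim \log(2+\tau)\,\Big\|\Big(\sum_j |g_j|^2\Big)^{1/2}\Big\|_{L^p_y}.
\]
Applied with $g_j=\Phi_{r,k(j)}*_y h_j$ and $\tau\sim 2^n$, this contributes the factor $n$, while the standard vector-valued $L^p(l^2)$ bound for the Littlewood--Paley square function controls $\{g_j\}$ by $\{h_j\}$. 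Tracking the $2^r$ loss from $\|\widehat{\Phi_r}\|_\infty$, the $2^{2r}$ loss from $\|\Phi_r\|_1$, and the polynomial-in-$r$ overhead in the shifted maximal bound, one arrives at the stated $n\cdot 10^{8r}$ (a comfortable overestimate). The hard part will be precisely that the shifts $s_j$ are genuinely $j$-dependent; what saves the day is the synchronization $k(j)-j=n-\log_2|u(x)|+O(1)$ forced by the support of $\widehat{\Phi_r}(u(x)/2^{j-k+n})$, which normalizes all shifts to the same scale $\sim 2^n$ so that a single shifted L--P inequality absorbs them uniformly with only a loss linear (rather than polynomial) in $n$.
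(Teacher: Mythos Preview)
Your approach is genuinely different from the paper's, and the core idea is sound, but there is a concrete gap. The paper proves Lemma~\ref{l900} by recasting the operator as a vector-valued singular integral $\vec T:L^p(l^2(\Z))\to L^p(l^2(\Z^2))$ with kernel $\vec K(y,s)$, establishing the $L^2$ bound directly via Fubini and Littlewood--Paley, and then verifying the H\"ormander condition $\int_{|y|>2|z|}\|\vec K(y,0)-\vec K(y,z)\|\,dy\lesssim 10^{6r}n$ by splitting the $k$-sum into three ranges relative to $|z|$; the factor $n$ arises from the middle range of width $\sim n$. The conclusion then follows from Theorem~\ref{a.2}. Your route instead collapses the $k$-sum using the support of $\widehat{\Phi_r}(u(x)/2^{j-k+n})$ (correct: for each $j$ only $O(1)$ values of $k$ survive, and the normalized shift $2^{k(j)}u(x)2^{-j}t\sim 2^n$), and then tries to finish with a single shifted square-function estimate.

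The gap is that the ``vector-valued shifted Littlewood--Paley theorem from the Appendix'' you invoke does not exist there. The Appendix contains only the shifted maximal operator bounds (Lemmas~\ref{ssme} and~\ref{sme}) and the vector-valued Calder\'on--Zygmund Theorem~\ref{a.2}. To carry out your strategy with what is actually available, you would need to dominate $|(\Phi_{r,k(j)}*_y h_j)(y-s_j)|$ pointwise by a geometric sum of $2^{2r}M^{[\sigma]}_2(\tilde\Psi_{k(j)}*_y h_j)(y)$ with $\sigma\sim 2^n$ (a decomposition analogous to (\ref{SS1})), and then apply Lemma~\ref{sme}. This works, but Lemma~\ref{sme} carries a $\log^2(2+|\sigma|)\sim n^2$ loss, not the linear $n$ you claim; so your argument as written would yield $n^2\cdot 10^{Cr}$ rather than $n\cdot 10^{8r}$. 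That weaker bound is harmless for the main theorem (only polynomial growth in $m$ is needed downstream), but it does not match the lemma as stated. If you want the sharp linear factor $n$, you must either prove the shifted square-function inequality with a single logarithm directly, or follow the paper's CZ route, where the $n$ emerges naturally from the width of the intermediate $k$-range in the H\"ormander kernel estimate.
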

 We postpone the proof of Lemma \ref{l900} and continue the proof of (\ref{a55}). For any fixed $x$, choosing $h_j(y):=f*_x\Phi_{-r-1,jl+m}(x-2^{-jl}\tau_0^l,y)$,  thanks to (\ref{901}),  it suffices to show that LHS of (\ref{a55}) is $\lesssim$
  \begin{equation}\label{SME}
n 10^{8r} \|\Big(\sum_{j\in S_l}|(f*_x\Phi_{-r-1,jl+m})(x-2^{-jl}\tau_0^l,y)|^2\Big)^\frac{1}{2}
 \|_p,
 \end{equation}
whose proof is based on
 \begin{equation}\label{SS1}
\begin{aligned}
&\ 
 \chi_{S_l}(j)|f*_x\Phi_{-r-1,jl+m})(x-2^{-jl}\tau_0^l,y)|\\
 \lesssim&\ 2^{3r}\Big(M^{[\tau_0^l2^{m+1}]}_1f(x,y)
  +\sum_{k\ge 0}2^{-2k} M^{[\tau_0^l2^{m-k+1}]}_1f(x,y)
 \Big),
 \end{aligned}
 \end{equation}
 in which  $M^{[\cdot]}_1$ is  the shifted maximal operator applied in the first variable, see the Appendix for its definition. We postpone the proof of (\ref{SS1}) at the end of this subsection.
 In reality, thanks to (\ref{SS1}),  utilizing the vector-valued shifted maximal estimate (\ref{sme1}) and Littlewood-Paley theorem,
   we deduce
 $$
 \begin{aligned}
   {\rm  (\ref{SME})}\lesssim\ \  &\ n  10^{9r}
    \Big\{ \|\Big(\sum_{j\in S_l}|M^{[\tau_0^l2^{m+1}]}_1(\Phi_{jl+m}*_xf)|^2\Big)^\frac{1}{2}
 \|_p\\
 &\ +\sum_{k\ge 0}2^{-2k} \|\Big(\sum_{j\in S_l}|M^{[\tau_0^l2^{m-k+1}]}_1(\Phi_{jl+m}*_xf)|^2\Big)^\frac{1}{2}
 \|_p\Big\}\\
 \lesssim_N&\ m n  10^{9r}\|\Big(\sum_{j\in \Z}|\Phi_{jl+m}*_xf|^2\Big)^\frac{1}{2}
 \|_p
 \lesssim_N\ mn  10^{9r}\|f\|_p.
 \end{aligned}
  $$
  We end the proof of (\ref{a55}).
  \vskip.1in
Now, it remains to verify  Lemma \ref{l900} and (\ref{SS1}).
\begin{proof}[Proof of Lemma \ref{l900}]
Denote $F(s):=\{h_j(s)\}_{j\in\Z}$, and
$$\vec{K}(y,s)(\{a_j\}_{j\in\Z}):=\{a_j \widehat{\Phi_r}(\frac{u(x)}{2^{j-k+n}})  \Phi_{r,k}(y-s+u(x)2^{-j}t) \}_{j,k},$$
and
$\vec{T}(F)(y):=\int_\R \vec{K}(y,s) F(s) ds.$
In order to prove (\ref{901}), it suffcies to show
\begin{equation}\label{911}
\|\vec{T}(F)\|_{L^p_y(l^2_{j,k})}\lesssim\ n 10^{8r}\|F\|_{L^p_y(l^2_j)}.
\end{equation}
We first have
$$
\begin{aligned}
\|\vec{T}(F)(y)\|_{l^2_{j,k}}^2
=&\ \sum_{j,k}\widehat{\Phi_r}(\frac{u(x)}{2^{j-k+n}})^2 |\int_\R h_j(s) \Phi_{r,k}(y-s+u(x)2^{-j}t)ds |^2\\
=&\ \sum_{j,k}\widehat{\Phi_r}(\frac{u(x)}{2^{j-k+n}})^2 |
h_j*_y \Phi_{r,k}(y+u(x)2^{-j}t) |^2\\
\le&\ 4^r\sum_{j,k} |
h_j*_y \Phi_{r,k}(y+u(x)2^{-j}t) |^2.
\end{aligned}
$$
 Fubini's theorem and Littlewood-Paley theorem give
\begin{equation}\label{bb1}
\begin{aligned}
\|\vec{T}(F)(y)\|_{L^2(l^2_{j,k})}^2\le
&\ 4^r \sum_{j\in\Z}\int_\R\sum_{k\in\Z}|h_j*_y\Phi_{r,k}|^2dy
\lesssim\  16^r \|F||_{L^2(l^2(\Z))}^2.
\end{aligned}
\end{equation}
Next, we shall prove
\begin{equation}\label{bb2}
\begin{aligned}
\int_{|y-s|>2|s-z|}
\|\vec{K}(y,s)-\vec{K}(y,z)\|_{l^2(\Z)\rightarrow l^2(\Z^2)} dy\lesssim 10^{6r} n.
\end{aligned}
\end{equation}
If (\ref{bb2}) holds, together with (\ref{bb1}) and the symmetric property of $\vec{K}(y,s)$, we deduce the desired result from the application of Theorem \ref{a.2}. Thus it remains to show (\ref{bb2}).
\vskip.1in
The translation invariance of $\vec{K}(y,s)$ gives that it suffices to show the case $s=0$, i.e.,
\begin{equation}\label{end12}
\int_{|y|>2|z|}
\|\vec{K}(y,0)-\vec{K}(y,z)\|_{l^2(\Z)\rightarrow l^2(\Z^2)} dy\lesssim\  10^{6r} n.
\end{equation}
For all $a_j\in l^2(\Z)$ and $\|a_j\|_{l^2_j}\le1$, we have
\begin{equation}\label{aam}
\begin{aligned}
&\ \ \|(\vec{K}(y,0)-\vec{K}(y,z))\{a_j\}\|_{l^2(\Z^2)}\\
=&\ \|\widehat{\Phi_r}(\frac{u(x)}{2^{j-k+n}})  \big(\Phi_{r,k}(y+u(x)2^{-j}t)-
\Phi_{r,k}(y-z+u(x)2^{-j}t)\big)\{a_j\}\|_{l^2(\Z^2)}\\
=&\ \Big(\sum_{j,k}|a_j|^2
|\widehat{\Phi_r}(\frac{u(x)}{2^{j-k+n}}) |^2
|\Phi_{r,k}(y+u(x)2^{-j}t)-
\Phi_{r,k}(y-z+u(x)2^{-j}t)|^2\Big)^{1/2}\\
\le&\ \|a_j\|_{l^2_j}
\Big(\sup_j\sum_k|\widehat{\Phi_r}(\frac{u(x)}{2^{j-k+n}}) |^2
|\Phi_{r,k}(y+u(x)2^{-j}t)-
\Phi_{r,k}(y-z+u(x)2^{-j}t)|^2\Big)^{1/2}\\
\le&\ 2^{r}
\sup_j\sum_k|\widehat{\Phi}(\frac{u(x)}{2^{j-k+n}}) |
|\Phi_{r,k}(y+u(x)2^{-j}t)-
\Phi_{r,k}(y-z+u(x)2^{-j}t)|.
\end{aligned}
\end{equation}
We will discuss the relation between $|\frac{u(x)}{2^{j-k}}|(\thicksim 2^n)$ and $|z|$. We assume $z\neq0$. Define $k_{1z}\in \Z$ by
$\frac{2^{n+10}}{|z|}\in [2^{k_{1z}},2^{k_{1z}+1})$ and $k_{2z}=k_{1z}-n-10$. Rewrite the summation of $k$ in (\ref{aam}) as follows:
$$\sum_{k}\cdot=\sum_{k\ge k_{1z}}\cdot+\sum_{k< k_{2z}}\cdot+\sum_{k_{2z}\le k< k_{1z}}\cdot.$$
Thus we have
$${\rm LHS \ of\  (\ref{aam})}
\le\ 2^{r}\sup_j\big(
\sum_{k\ge k_{1z}}\cdot+\sum_{k< k_{2z}}\cdot+\sum_{k_{2z}\le k< k_{1z}}\cdot\big)=: L_{1r}(y,z)+L_{2r}(y,z)+L_{3r}(y,z).$$
So (\ref{end12}) follows from
\begin{equation}\label{end124}
\tilde{L}_\kappa:=\int_{|y|\ge 2|z|} L_{\kappa r}(y,z) dy\lesssim\ 10^{6r}n\  {\rm\  for\  all}\  \kappa=1,2,3.
\end{equation}
For $\tilde{L}_1$,
because of  ${k\ge k_{1z}}$,
we have $|2^ku(x)2^{-j}t|\le 2^{n+4}$, which yields $|2^ky|\ge 2^{k-1}|y-z|\ge 2^{k_{1z}}|z|\ge 2|2^ku(x)2^{-j}t|$. Then
 $$
 \tilde{L}_1\lesssim 2^{4r}\sum_{k\ge k_{1z}} \int_{|y|\ge 2|z|}
 \frac{2^k}{(1+2^k|y|)^3} dy
 \lesssim 2^{4r}\sum_{k\ge k_{1z}}\frac{1}{(1+2^k|z|)^2}
 \lesssim\ 2^{4r}(2^{k_{1z}}|z|)^{-2}\lesssim 2^{4r}2^{-2n}.$$
By the fundamental theorem of calculus,  we obtain
 $$
\begin{aligned}
 \tilde{L}_2\lesssim&\ 2^{3r}\sum_{k\le k_{2z}}2^{2k}|z|\sum_{j\in\Z} \widehat{\Phi}(\frac{u(x)}{2^{j-k+n}})
\int_{|y|\ge 2|z|}\int_0^1 \frac{1}{1+|2^k(y-sz+u(x)2^{-j}t)|^2} dsdy\\
\lesssim&\ 2^{3r}\sum_{k\le k_{2z}}2^{k}|z|\sup_j
\int_0^1\int_{|y|\ge 2^{k+1}|z|}\frac{1}{1+|y-2^k(sz-u(x)2^{-j}t)|^2} dy ds\\
\lesssim&\ 2^{3r}\sum_{k\le k_{2z}}2^{k}|z|\lesssim\ 2^{3r} 2^{k_{2z}}|z|\lesssim 2^{3r}.
 \end{aligned}
$$
Finally, we estimate $\tilde{L}_3$. Denote $\diamondsuit:=\{k\in\Z:k_{1z}\ge k\ge k_{2z}\}$,
observe that
$\sharp \diamondsuit\lesssim n$, we have
$$
\begin{aligned}
\tilde{L}_3\lesssim&\ 2^{3r}\sum_{k\in\diamondsuit}
\sum_j\widehat{\Phi}(\frac{u(x)}{2^{j-k+n}})
\int_{|y|\ge 2|z|} \big(\frac{2^k}{1+|2^k(y+u(x)2^{-j}t)|^2}+
\frac{2^k}{1+|2^k(y-z+u(x)2^{-j}t)|^2}
\big)dy\\
\lesssim&\ 2^{4r}\sum_{k\in\diamondsuit}
\int_{\R}\frac{1}{1+|y|^2}dy
\lesssim\ 2^{4r} n.
\end{aligned}
$$
Combining  the above estimates of $\tilde{L}_1$, $\tilde{L}_2$ and $\tilde{L}_3$, we  complete the proof of (\ref{end124}).  This concludes the proof of Lemma \ref{l900}.
\end{proof}
\begin{proof}[Proof of (\ref{SS1})]
Making use of the prepoerty of $\Phi$, we receive
$$\chi_{S_l}(j)|(f*_x\Phi_{-r-1,jl+m})(x-2^{-jl}\tau
_0^l,y)|
\lesssim\ 2^{3r}\int|f(x-2^{-jl}\tau
_0^l-x',y)|\frac{2^{jl+m}}{1+|2^{jl+m} x'|^3}dx'.$$
The constant $2^{3r}$ comes from the pointwise  estimate of $\Phi_{-r-1,jl+m}$.
The integral on the right side is majorized  by
$$\int_{|x'|\le 2^{-jl-m}}\cdot+\sum_{k\ge 0}\int_{|x'|\in [ 2^{-jl-m+k}, 2^{-jl-m+k+1}]}\cdot=:L_1+L_2.$$
For $L_1$,
we have 
$$
\begin{aligned}
L_1\lesssim&\ 2^{jl+m}\int_{|x'|\le 2^{-jl-m}}|f(x-2^{-jl}\tau
_0^l-x',y)| dx'\\
\lesssim&\ 2^{jl+m}\int_{|x-2^{-jl}\tau
_0^l-z|\le 2^{-jl-m}}|f(z,y)| dz\\
\lesssim&\ M^{[\tau_0^l2^{m+1}]}_1f(x,y).
\end{aligned}
$$
As for $L_2$, in a similar fashion, we obtain
$$
\begin{aligned}
L_2\lesssim&\ \sum_{k\ge 0}2^{-3k}2^{jl+m}
\int_{|x'|\in [ 2^{-jl-m+k}, 2^{-jl-m+k+1}]}|f(x-2^{-jl}\tau
_0^l-x',y)| dx'\\
\lesssim&\  \sum_{k\ge 0}2^{-2k} M^{[\tau_0^l2^{m-k+1}]}_1f(x,y).
\end{aligned}
$$
As a result, (\ref{SS1}) have been proved by combining  the estimates of  $L_1$ and $L_2$.
\end{proof}
 \subsection{The estimate of $\h_{\De_3}f$}
 Without loss of the generality, we assume $m>0$, $n\le 0$ and $m-n>100l$. We can achieve the goal by combining  the approach leading to the estimate of $\h_{\De_2}f$ and 
 certain idea in the proof of  the estimate of $\h_{\De_1}f$.   More  precisely,
  the essential  difference with the estimate of $\h_{\De_2}f$ is that we shall first  use  Taylor's expansion
$e(\eta u(x)2^{-j})=\sum_{q\ge 0}\frac{i^q}{q!}(\eta u(x)2^{-j})^q$
to the $M_j(\xi,\eta)$. The role of  Taylor's expansion  is similar to  the one in the estimate of $\h_{\De_1}f$.
\vskip.1in
Using the above procedure,  to get the desired estimate of $\h_{\De_3}f$, it suffices to show
\begin{equation}\label{ppp}
\sum_{q\ge 0}\frac{i^q}{q!}\|
\int_{\R^2}
e(\xi x+\eta y) \widehat{f}(\xi,\eta)\sum_{j\in S_l} M_{*,j,m,q}(\xi,\eta) d\xi d\eta\
\|_p\lesssim_N m^22^{-m} \|f\|_p,
\end{equation}
where 
$$M_{*,j,m,q}(\xi,\eta):=\sum_{n\le \min\{0,m-100l\}}\sum_{k\in\Z}\widehat{\Phi}(\frac{\xi}{2^{jl+m}})
\widehat{\Phi}(\frac{\eta}{2^{k}})
\widehat{\Phi}(\frac{u(x)}{2^{j-k+n}})(\eta u(x)2^{-j})^q
\int e(\xi P(2^{-j}t))t^q\rho(t)dt.$$
For $q=0$, we can absorb the summation of $n$ by  $\widehat{\Phi}(\frac{u(x)}{2^{j-k+n}})$. To prove (\ref{ppp}),  it suffices to show that for $q=0$,
\begin{equation}\label{eeq}
\|
\int_{\R^2}
e(\xi x+\eta y) \widehat{f}(\xi,\eta)\sum_{j\in S_l} \tilde{M}_{*,j,m,0}(\xi,\eta) d\xi d\eta\
\|_p\lesssim_N m^22^{-m} \|f\|_p,
\end{equation}
where 
$$\tilde{M}_{*,j,m,0}(\xi,\eta):=\sum_{k\in\Z}\widehat{\Phi}(\frac{\xi}{2^{jl+m}})
\widehat{\Phi}(\frac{\eta}{2^{k}})
\int e(\xi P(2^{-j}t))\rho(t)dt,$$
and for $q\ge 1$,
\begin{equation}\label{ppp1}
\|
\int_{\R^2}
e(\xi x+\eta y) \widehat{f}(\xi,\eta)\sum_{j\in S_l} \tilde{M}_{*,j,n,m,q}(\xi,\eta) d\xi d\eta\
\|_p\lesssim_N C^qm^22^{nq-m} \|f\|_p,
\end{equation}
where the uniform constant $C$ is independent of $m,n,q$ and 
$$\tilde{M}_{*,j,n,m,q}(\xi,\eta):=\sum_{k\in\Z}\widehat{\Phi}(\frac{\xi}{2^{jl+m}})
\widehat{\Phi}(\frac{\eta}{2^{k}})
\widehat{\Phi}(\frac{u(x)}{2^{j-k+n}})(\eta u(x)2^{-j})^q
\int e(\xi P(2^{-j}t))t^q\rho(t)dt.$$
Next, we give the sketch of the proofs of  (\ref{eeq}) and (\ref{ppp1}).
\vskip.1in
{\bf Estimate (\ref{eeq})}\ \ 
Applying integration by parts, we have
$$
\begin{aligned}
\int e(\xi P(2^{-j}t))\rho(t)dt&\ =i \int \frac{\rho'(t)}{\xi 2^{-j}P'(2^{-j}t)}e(\xi P(2^{-j}t)) dt\\
&\ =i2^{-m}(\frac{\xi}{2^{jl+m}})^{-1}\int (\frac{P'(2^{-j}t)}{2^{-j(l-1)}})^{-1} \rho'(t) e(\xi P(2^{-j}t)) dt.
\end{aligned}
$$
Arguing similarly as in the proof of  (\ref{A41}), we can get the desired estimate for $q=0$.
\vskip.1in
{\bf Estimate (\ref{ppp1})}\ \  
Using $(\eta u(x)2^{-j})^q=2^{nq}(\eta u(x)2^{-j-n})^q$, and preforming an analogous  process  as yielding (\ref{A41}), we can also achieve the desired estimate for $q\ge 1$.
\vskip.1in
\section{Proof of Theorem \ref{t2} for $\sigma=4$:\ The estimate of $\h_{\De_4}f$}
\label{hh}
In this section, we prove Theorem \ref{t2} for $\sigma=4$, which is  the core part in this paper.
\subsection{The reduction of this estimate}\label{4.1}
Without loss of generality, we assume $m\ge n$ in $\De_4$. Then we reduce the conditions of $m,n$ to  $m>0$ and $m-100l\le n\le m$, which means the summation of $n$ is finite if $m$ is fixed. Thus we can assume    $m=n$ and set
$\De_4=\{(m,n)\in\Z^2:\ m=n>0\}$
in the  following. Taking advantage of
$\rho(t)=\rho_+(t)+\rho_-(t),$
 we just need to prove the  operator with $\rho(t)$ replaced by $\rho_+(t)$ since the case on $\rho_-(t)$ can be handled  in a similar way. For  convenience, we still use $\rho(t)$ to stand for $\rho_+(t)$.
\vskip.1in
Recall
$\phi_{j,\xi,\eta,x}(t):=\xi P(2^{-j}t)+\eta u(x)2^{-j}t,$
then
$M_{j,\De_4}(\xi,\eta)=\sum_{m>0}M_{j,m,\De_4}(\xi,\eta),$
where
$$
M_{j,m,\De_4}(\xi,\eta):=
\sum_{k\in\Z}
\widehat{\Phi}(\frac{\xi}{2^{jl+m}})
\widehat{\Phi}(\frac{\eta}{2^{k}})
\widehat{\Phi}(\frac{u(x)}{2^{j-k+m}})\int e^{i\phi_{j,\xi,\eta,x}(t)}\rho(t)dt.$$
Direct computations lead
$$\phi_{j,\xi,\eta,x}'(t)=\xi 2^{-j}P'(2^{-j}t)+u(x)2^{-j}\eta,\ \ \phi_{j,\xi,\eta,x}''(t)=\xi 2^{-2j}P''(2^{-j}t).$$
Denote
$
G(t):=lt^{l-1}+\sum_{2=i\neq l}^Nia_i2^{j(l-i)}t^{i-1},
$
and $\bar{G}(s):=\int_0^s G(\tau)d\tau$,
then
$$\phi_{j,\xi,\eta,x}'(t):=\xi 2^{-jl}G(t)+\eta u(x)2^{-j},
\ \phi_{j,\xi,\eta,x}''(t):= \xi 2^{-jl}G'(t),\ \phi_{j,\xi,\eta,x}(t)=\xi 2^{-jl}\bar{G}(t)+\eta u(x)2^{-j}t.$$
Notice that $G(t)$ is monotone in the support of $\rho(t)$ since $j\in S_l$, and then we denote the inverse function of $G(t)$
by $f(t)$.
Here the domains of definition of $G(t)$ and $f(t)$ can be extended to large  interval like $[\tilde{K}^{-1},\tilde{K}]$ for  sufficiently large $\tilde{K}=\tilde{K}(N)$ because we can  take $\digamma_N$ large enough in the definition of $S_l$ given in  (\ref{sing}).
\vskip.1in
When $u(x)2^{-j}\eta/\xi 2^{-jl}<0$, we have $|\phi_{j,\xi,\eta,x}''(t)|\thicksim 2^m$, which implies that  $\phi_{j,\xi,\eta,x}(t)$ has a unique critical point $t_{cx}$  satisfying
\begin{equation}\label{cpo}
G(t_{cx})=-\frac{u(x)2^{-j}\eta}{\xi 2^{-jl}},\ \ {\rm and}\ t_{cx}=t_{cx}(\xi,\eta):=f\big(-\frac{u(x)2^{-j}\eta)}{\xi 2^{-jl}} \big).
\end{equation}
The support of $\widehat{\Phi}$ gives that there exist two cases:
$(1)\  \frac{u(x)2^{-j}\eta}{\xi 2^{-jl}}\thicksim -1 \ {\rm or}\ (2)\ \frac{u(x)2^{-j}\eta}{\xi 2^{-jl}}\thicksim 1.$
To give a precise analysis, we use  $\widehat{\Phi}(z)=\widehat{\Phi}_+(z)+\widehat{\Phi}_-(z)$ to obtain
\begin{equation*}
\widehat{\Phi}(\frac{\xi}{2^{jl+m}})
\widehat{\Phi}(\frac{\eta}{2^{k}})
\widehat{\Phi}(\frac{u(x)}{2^{j-k+m}})=
\sum_{(\omega_1,\omega_2,\omega_3)\in \{+,-\}^3}
\widehat{\Phi}_{\omega_1}(\frac{\xi}{2^{jl+m}})
\widehat{\Phi}_{\omega_2}(\frac{\eta}{2^{k}})
\widehat{\Phi}_{\omega_3}(\frac{u(x)}{2^{j-k+m}}).
\end{equation*}
 If
$(\omega_1,\omega_2,\omega_3)\in \{ (-,-,+),  (-,+,-), (+,+,+),  (+,-,-)\},
$
integration by parts gives the $L^2$ estimate with bound $2^{-m}$
since  $|\phi_{j,\xi,\eta,x}'(t)|\gtrsim 2^m$. It remains to bound  
 other four cases given by
$(\omega_1,\omega_2,\omega_3)\in
\{ (-,+,+),  (-,-,-), (+,-,+),  (+,+,-)\}.$ 
 Indeed, it suffices to show the case  $(\omega_1,\omega_2,\omega_3)=(-,+,+)$ since other three cases can be treated similarly.   That is,  we assume
 \begin{equation}\label{jx}
\widehat{\Phi}(\frac{\xi}{2^{jl+m}})
\widehat{\Phi}(\frac{\eta}{2^{k}})
\widehat{\Phi}(\frac{u(x)}{2^{j-k+m}})=
\widehat{\Phi}_-(\frac{\xi}{2^{jl+m}})
\widehat{\Phi}_+(\frac{\eta}{2^{k}})
\widehat{\Phi}_+(\frac{u(x)}{2^{j-k+m}}).
\end{equation}
 In what follows, for convenience we omit the subscript $+$ and $-$ on the right side of (\ref{jx}),  and one can keep 
 $\frac{\xi}{2^{jl+m}}\sim -1,\ \ \frac{\eta}{2^{k}}\sim 1,\ \ 
 \frac{u(x)}{2^{j-k+m}}\sim 1 
 $
 in mind.
  Note 
  $\phi_{j,\xi,\eta,x}''(t)\thicksim_l -2^m<0$ at this point. 
\vskip.1in
Applying $\sum_{s\in\Z} \theta_+\big(\frac{G(t_{cx})}{2^s}\big)=1$, we have
\begin{equation}\label{op}
\int e( \phi_{j,\xi,\eta,x}(t))\rho(t)dt
=\sum_{s\in\Z} \theta_+\big(\frac{G(t_{cx})}{2^s}\big)\int e( \phi_{j,\xi,\eta,x}(t))\rho(t)dt.
\end{equation}
Thanks to the above analysis, there exists a sufficiently large  constant
$C(N)$ such that  RHS of (\ref{op}) with  $\sum_{s\in\Z} $ replaced by  $\sum_{|s|\ge C(N)}$ can be estimated by integration by parts, which yields the desired $L^2$ estimate with bound $2^{-m}$. 
Here we have used that $|s|\ge C(N)$ leads to  $G(t_{cx})\gg1$ or $0<G(t_{cx})\ll1$ which yields  $|\phi_{j,\xi,\eta,x}'(t)|\gtrsim 2^m$. 
So we call this part ``good part".
Via the method of stationary phase, denote $\lambda:=2^m$
and
$\phi_{j,\xi,\eta,x}^m(t):=2^{-m}\phi_{j,\xi,\eta,x}(t)$,
we achieve
\begin{equation}\label{Analy}
\begin{aligned}
  M_{j,\De_4}(\xi,\eta)
  =&\ \sum_{m>0,k\in\Z}
\widehat{\Phi}(\frac{\xi}{2^{jl+m}})
\widehat{\Phi}(\frac{\eta}{2^{k}})
\widehat{\Phi}(\frac{u(x)}{2^{j-k+m}})\int e(\lambda \phi_{j,\xi,\eta,x}^m(t))\rho(t)dt\\
=&\ \sum_{m>0,k\in\Z}
\widehat{\Phi}(\frac{\xi}{2^{jl+m}})
\widehat{\Phi}(\frac{\eta}{2^{k}})
\widehat{\Phi}(\frac{u(x)}{2^{j-k+m}})\Big(\ {\rm good\  part}\\
 &\ +\sum_{|s|\le C(N)} \theta_+\big(\frac{G(t_{cx})}{2^s}\big)\int e(\lambda \phi_{j,\xi,\eta,x}^m(t))\rho(t)dt\Big)\\
  =&\ \sum_{m>0,k\in\Z}
\widehat{\Phi}(\frac{\xi}{2^{jl+m}})
\widehat{\Phi}(\frac{\eta}{2^{k}})
\widehat{\Phi}(\frac{u(x)}{2^{j-k+m}})
\Big\{\ {\rm good\  part}\ +{\rm \ main\ part\ }
  + {\rm\  error}\Big\},
\end{aligned}
\end{equation}
where
$${\rm \ main\ part\ }:=\mathcal{C}\lambda^{-\frac{1}{2}}
  \frac{e(\lambda
  \phi_{j,\xi,\eta,x}^m(t_{cx}))}
  {|(\phi_{j,\xi,\eta,x}^m)''(t_{cx})|^{1/2}}
  \sum_{|s|\le C(N)} \theta_+\big(\frac{G(t_{cx})}{2^s}\big)\rho(t_{cx}),$$
$\mathcal{C}$ is a universal constant. The ``error"
 admits a better decay rate $\lambda^{-3/2}$ so that we can get its desired estimate by the Cauchy-Schwarz inequality. Precisely,  we can get the  
 associated $L^2$ estimate with bound $\lambda^{-1}=2^{-m}$.
\vskip.1in
In this section, with the assumption (\ref{jx}),
we will prove the following two lemmas. For $m>0$, define
$$\h_{\De_4,m}f(x,y)=\int_{\xi,\eta}\widehat{f}(\xi,\eta)
e(\xi x+\eta y) \sum_{j\in S_l} M_{j,m,\De_4}(\xi,\eta)d\xi d\eta.$$
\begin{lemma}[$L^2$ estimate]\label{ll2}
 There exists a positive constant $\epsilon_0$ such that
\begin{equation}\label{l2}
\|\h_{\De_4,m}f\|_2\lesssim\ 2^{-\epsilon_0 m}\|f\|_2.
\end{equation}
\end{lemma}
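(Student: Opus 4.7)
The first step is to focus on the ``main part'' of (\ref{Analy}); the ``good part'' is handled by integration by parts in $t$ and yields an $L^2$ bound of size $2^{-m}$, while the ``error'' has pointwise size $\lambda^{-3/2}=2^{-3m/2}$ and gives $2^{-m}$ in $L^2$ by Cauchy--Schwarz. Thus the task reduces to showing (\ref{l2}) for the operator whose multiplier is
$$\mathcal{C}\lambda^{-\frac{1}{2}} \sum_{|s|\le C(N)} \theta_+\!\Bigl(\frac{G(t_{cx})}{2^s}\Bigr)\frac{\rho(t_{cx})\,e(\lambda \phi_{j,\xi,\eta,x}^m(t_{cx}))}{|(\phi_{j,\xi,\eta,x}^m)''(t_{cx})|^{1/2}},$$
cut off by (\ref{jx}) and summed over $j\in S_l$ and $k\in\Z$. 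The trivial pointwise factor $\lambda^{-1/2}=2^{-m/2}$ does not suffice, because the multiplier depends on $x$ through $u(x)$, which prevents a direct application of Plancherel after summing in $k$ and $j$; some cancellation across $j$ or across $x$ must be extracted.

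Next I would change variables via $s=-u(x)\eta/\bigl(\xi 2^{-j(l-1)}\bigr)$, so that $t_{cx}=f(s)=\mathcal{F}^{-1}(s)$ with $\mathcal{F}=G$ playing the role of the polynomial from Section \ref{AC}, and rewrite the phase $\lambda\phi_{j,\xi,\eta,x}^m(t_{cx})$ as a combination of monomials $F(s)^{\alpha_k}$ with coefficients that depend polynomially on $u(x)$. After dyadic localization of the $s$-variable on the lattice $2^{-m/2}\Z$, the relevant amplitude can be regarded as a periodic function of $s$, and the phase takes the shape $B(x,t)=\sum_{k}a_{k}(x)F(t)^{\alpha_{k}}$ appearing in Lemma \ref{p1}, with $|a_{k}(x)|\lesssim 1$. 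This is the ``discretization'' and ``periodic function'' step announced in the introduction.

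I would then apply the $TT^\star$ method to the resulting operator in the $y$-variable. Expanding the kernel of the composition produces an oscillatory integral whose phase has the form $B(x,x+l)-w^{-1}$, where $l$ parametrizes the $y$-separation at scale $2^{-m/2}$ and $w$ is a dual frequency parameter ranging over $D_m$. The non-degeneracy $|\partial_{t}B|\gtrsim 1$ required by (\ref{lower}) follows from $|\mathcal{F}'|\thicksim 1$ on the relevant interval. Splitting the lattice $\mathfrak{D}_m\times D_m$ into $\mathfrak{G}_{B,\epsilon}$ and $\mathfrak{H}_{B,\epsilon}$ as in Lemma \ref{p1}, on the good set the bound $|A_{B,\epsilon}(l,w)|\le 2^{-2\epsilon m}$ gives an $x$-integration gain of $2^{-2\epsilon m}$ on top of the trivial $2^{-m/2}$, whereas on the bad set Lemma \ref{p1} caps the number of relevant $l$'s by $2^{(1/2-\mu)m}$ against the total $\thicksim 2^{m/2}$, yielding a $2^{-\mu m}$ gain. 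Balancing the two regimes produces (\ref{l2}) with some $\epsilon_0=\epsilon_0(N)>0$.

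The main obstacle is precisely the identification carried out in the third step: one must arrange the $TT^\star$ kernel so that its phase genuinely takes the template form $B(x,x+l)-w^{-1}$ with the hypotheses of Lemma \ref{p1} satisfied, while tracking the polynomial-in-$m$ losses coming from the sums over $j,k,s$ so that they do not overwhelm the exponential gain. The periodization before $TT^\star$ is what allows the $k$-sum to be reindexed as a clean lattice structure in $w$, and the decisive ingredient is the sparsity estimate $\sharp\mathfrak{H}_{B,\epsilon}^{1}\lesssim 2^{(1/2-\mu)m}$ of Lemma \ref{p1}, rather than any direct oscillatory-integral bound, since on the bad lattice points the phase need not oscillate at all.
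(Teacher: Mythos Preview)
Your high-level plan (stationary phase to isolate the main part, discretize at scale $2^{-m/2}$, periodize, apply $TT^\star$, and finish with the good/bad set dichotomy of Lemma \ref{p1}) is exactly the scheme the paper follows. But the concrete route you describe diverges from the one that actually works, and in ways that matter.

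The variables you discretize and periodize are wrong. The paper first uses Littlewood--Paley and duality to reduce to a single $T_{j,k,m}$ (so the $j,k$ sums are gone before any delicate analysis begins), then discretizes $u(x)$ at scale $1$ via $\varrho(u_{j,k,0}(x)-p_1)$, and separately discretizes $\xi,\eta$ at scale $2^{m/2}$ via parameters $w,v$. The periodization is of $\widehat{f_{jk}}(\xi,\eta)$ on these scale-$2^{m/2}$ blocks, producing Fourier coefficients $a_{s,l_1}^{w,v}$; here $l_1$ is the Fourier-series index in $\xi$ (hence dual to $x$), not a $y$-separation, and $w$ is the $\xi$-block label, not a dual frequency to $y$. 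Plancherel is applied in $y$ first, and the $TT^\star$ expansion is of the resulting $L^2_{x,\eta}$ norm. Your description---discretize the composite variable $s$, periodize in $s$, $TT^\star$ in $y$, with $l$ a $y$-separation---would not deliver a kernel in which the spatial variable $x$ and the lattice index appear additively; that additive structure $2^{jl+m/2}x+l_1$, which after rescaling becomes $x+l$, is exactly what is needed to invoke Lemma \ref{p1} and is an artifact of $l_1$ being the $\xi$-series index.

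You are also missing the step that turns the $TT^\star$ kernel into the decay weight where Lemma \ref{p1} can be applied. The expression $A_1(x)(G\circ\bar G^{-1})(x+l)+w^{-1}$ does not appear as a phase in the $TT^\star$ kernel; it appears in the \emph{denominator} $\bigl(1+|\tilde\Theta_{v,w,l_1}(x)|^2\bigr)^{-1}$, and only after a separate pointwise lemma (Lemma \ref{l6.1}) is proved for the kernel $\Omega(x)=\int_\eta I_x(\eta)\overline{\mathcal I_x(\eta)}\,e^{i(s-s')\eta}\,d\eta$. That lemma requires iterated integration by parts in $\xi,\xi',\eta$ together with a geometric-series expansion of $(W'_{\eta,x}(\xi))^{-2}$ around $(W'_{v,x}(w))^{-2}$; it is the mechanism that converts oscillation into the decay factor on which the good/bad split of Lemma \ref{p1} then operates. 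Without that lemma your proposal has no bridge from the $TT^\star$ integral to the template $B(x,x+l)-w^{-1}$.
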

and
\begin{lemma}[$L^p$ estimate]\label{llp}
 For all $p\in (1,\infty)$, we have
\begin{equation}\label{lp}
\|\h_{\De_4,m}f\|_p\lesssim\ m^2\|f\|_p.
\end{equation}
\end{lemma}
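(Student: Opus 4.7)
The plan is to mirror the dual-pairing argument used for $\h_{\De_2}f$ in Section \ref{lh}, but without the benefit of integration-by-parts decay; the target $m^2$ comes solely from two logarithmic losses in shifted maximal estimates, one in each coordinate. I would first dualize and reduce to proving
\[
|\langle \h_{\De_4,m}f,g\rangle| \lesssim m^2 \|f\|_p \|g\|_{p'} \quad \text{for all } g \in L^{p'}(\R^2).
\]
Using Fourier inversion to write $M_{j,m,\De_4}(\xi,\eta)$ as the integral over $t \in \mathrm{supp}\,\rho$ of $e(\phi_{j,\xi,\eta,x}(t))$ times the three frequency cutoffs, and moving the $t$-integral outside, the pairing becomes a $t$-integral of inner products of shifted Littlewood–Paley pieces of $f$ (at $x$-scale $2^{jl+m}$ and $y$-scale $2^k$) evaluated at $(x-P(2^{-j}t),\,y-u(x)2^{-j}t)$ against the $y$-Littlewood–Paley piece of $g$ at scale $2^k$. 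Hölder in $(x,y)$ and Cauchy–Schwarz in $(j,k)$ then split the estimate into a product of two vector-valued square functions.

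For the $g$-side, the bump $\widehat{\Phi}(u(x)/2^{j-k+m})$ constrains $j-k$ to a bounded set for each $x$, so the $j$-sum collapses and Littlewood–Paley theory in $y$ controls that factor by $\|g\|_{p'}$. For the $f$-side, I would perform the change of variable $\tau = 2^{jl}P(2^{-j}t)$ (admissible because $j\in S_l$, so the derivative is comparable to $1$), which converts the $x$-translation into $x - 2^{-jl}\tau$. Arguing exactly as in the derivation of (\ref{SS1}), the kernel bound for $\Phi_{-,jl+m}$ yields the pointwise domination of $|\Phi_{jl+m}*_x f(x-2^{-jl}\tau,y)|$ by a rapidly summable series of shifted Hardy–Littlewood maximal operators $M_1^{[\tau\cdot 2^{m-k+1}]}$ applied to Littlewood–Paley pieces of $f$. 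The vector-valued shifted maximal inequality (\ref{sme1}) then costs one factor of $m$ because the shift has size $2^m$. The residual $y$-translation by $u(x)2^{-j}t$, together with the coupling $\widehat{\Phi}(u(x)/2^{j-k+m})$, is handled by a variant of Lemma \ref{l900}: the key $T T^{\star}$ computation there gave a logarithmic loss of size $n$ from the intermediate range $\diamondsuit$ in the estimate of $\tilde{L}_3$, and in $\De_4$ one has $n=m$, so this contributes the second factor of $m$. Littlewood–Paley orthogonality in $(j,k)$ (using the one-to-one correspondence between $j$ and $k$ forced by the $u(x)$-cutoff) then closes the estimate.

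The main obstacle, as compared with the $\h_{\De_2}$ and $\h_{\De_3}$ cases, is the lack of any quantitative decay from the phase; the operator is genuinely oscillatory and stationary, so nothing in the $L^p$ bound can come from the multiplier itself beyond its size. One must be careful that the change of variable $\tau = 2^{jl}P(2^{-j}t)$ is uniform in $j\in S_l$, that the pointwise kernel domination survives the $x$-dependence of $u(x)$ (which it does, since $u$ enters only through the bump $\widehat{\Phi}(u(x)/2^{j-k+m})$ which in turn constrains $j-k$), and that the Lemma \ref{l900}-type bound is genuinely polynomial in $m$ rather than in some larger scale. The stationary-phase structure of $\phi_{j,\xi,\eta,x}(t)$, and in particular the oscillatory integral asymptotics summarized in (\ref{Analy}), plays no role here; it is used exclusively in establishing the $L^2$ decay of Lemma \ref{ll2}, which will then be interpolated against the present $L^p$ bound to sum over $m$ and conclude Theorem \ref{t2} for $\sigma=4$.
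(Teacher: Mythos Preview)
Your proposal is correct and follows essentially the same route as the paper: dualize, split via H\"older and Cauchy--Schwarz into a $g$-side square function (controlled by Littlewood--Paley since $\sum_j|\widehat{\Phi}(u(x)/2^{j-k+m})|^2\lesssim 1$) and an $f$-side square function, then invoke the proof of (\ref{a55}) with $r=0$ and $n=m$ (Lemma~\ref{l900} contributing one factor of $m$, the vector-valued shifted maximal estimate (\ref{sme1}) the other). The paper's own proof is a three-line reference back to (\ref{a55}), and your outline unpacks precisely that reference.
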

Interpolating (\ref{l2}) with (\ref{lp}) yields the desired $L^p$ estimate of $\h_{\De_4,m}f$ and then the $L^p$ estimate of $\h_{\De_4}f$. Therefore, we only require to verify  Lemma \ref{ll2} and Lemma \ref{llp}. Notice that we can assume $m\ge C_1(N)$ for sufficiently large $C_1(N)$. 
\subsection{Proof of Lemma \ref{ll2} ($L^2$ estimate)}
Denote $\underline{\rho}(t_{cx}):= \theta_+\big(\frac{G(t_{cx})}{2^s}\big)\rho(t_{cx})$, 
define  $\mathfrak{M}_{j,k,m,x}(\xi,\eta)$ by
$$\mathfrak{M}_{j,k,m,x}(\xi,\eta):=2^{-\frac{m}{2}}
\widehat{\Phi}(\frac{\xi}{2^{jl+m}})
\widehat{\Phi}(\frac{\eta}{2^{k}})
\widehat{\Phi}(\frac{u(x)}{2^{j-k+m}})
  \frac{e(
  \phi_{j,\xi,\eta,x}(t_{cx}))}
  {|(\phi_{j,\xi,\eta,x}^m)''(t_{cx})|^{1/2}}\underline{\rho}(t_{cx})
$$
and denote
$$T_{j,k,m}f(x,y):=\chi_{S_l}(j)\int_{\R^2}e(\xi x+\eta y)\widehat{f}(\xi,\eta) \mathfrak{M}_{j,k,m,x}(\xi,\eta)d\xi d\eta,$$
due to the analysis in (\ref{Analy}), applying the dual arguments and Littlewood-Paley theorem,
we only need to show the following lemma.
\begin{lemma}\label{l100}
There exists a positive constant $\epsilon_1$ such that
\begin{equation}\label{aim1}
\|T_{j,k,m}f\|_2\lesssim\ 2^{-\epsilon_1 m}\|f\|_2.
\end{equation}
\end{lemma}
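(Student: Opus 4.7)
The plan is to attack (\ref{aim1}) via a $TT^*$ argument at the natural scale $2^{-m/2}$ set by the stationary phase, and to extract the exponential decay by invoking the $\epsilon$-improving cardinality bound in Lemma \ref{p1}.

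First, I would compute the kernel of $T_{j,k,m}T_{j,k,m}^*$. Since $\mathfrak{M}_{j,k,m,x}$ is a genuine $x$-dependent multiplier, a direct computation gives
\begin{equation*}
L(x,y;x_1,y_1)=\int e\bigl(\xi(x-x_1)+\eta(y-y_1)\bigr)\mathfrak{M}_{j,k,m,x}(\xi,\eta)\overline{\mathfrak{M}_{j,k,m,x_1}(\xi,\eta)}\,d\xi\,d\eta,
\end{equation*}
with amplitude of size $\lesssim 2^{-m}$ on the support $\xi\sim -2^{jl+m}$, $\eta\sim 2^k$, and with total phase $\xi(x-x_1)+\eta(y-y_1)+\phi_{j,\xi,\eta,x}(t_{cx})-\phi_{j,\xi,\eta,x_1}(t_{cx_1})$. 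Thus $\|T_{j,k,m}\|_{2\to 2}^2=\|L\|_{2\to 2}$, and it suffices to beat the trivial $L^2$ bound by a factor $2^{-2\epsilon_1 m}$.

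Second, I would pass to the dimensionless variables $\alpha:=-\xi/2^{jl+m}$, $\beta:=\eta/2^k$ and $\gamma(x):=u(x)/2^{j-k+m}$, all of size $\sim 1$ on the support. A short computation shows that the evaluated phase becomes $2^m H(\alpha,\gamma(x)\beta)$ for an explicit smooth function $H(\alpha,\tau)=-\alpha\bar G(f(\tau/\alpha))+\tau f(\tau/\alpha)$, where $f=G^{-1}$ is the inverse of the monotone derivative $G$. Plancherel in $y$ freezes $\beta$, and the remaining $(\alpha,x,x_1)$-integral is the core oscillatory object. Discretizing $\alpha$ and $\gamma$ at the critical scale $2^{-m/2}$, and treating the integrand as a periodic function in the natural phase variables, reduces the problem to counting, for lattice pairs $(l,w)\in\mathfrak{D}_m\times D_m$ with $l\sim x-x_1$ and $w\sim\gamma(x)$, the measure of the set of $x$ on which the restricted phase is approximately stationary. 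This set is precisely of the form $A_{B,\epsilon}(l,w)$ from Lemma \ref{p1}, with $B(x,t)=\sum_k a_k(x)F(t)^{\alpha_k}$ extracted from $H$ by Taylor expansion in $\gamma(x)\beta$ around the discrete lattice, $F$ identified with $f$, and coefficients $a_k(x)$ depending measurably on $u(x)$ and the polynomial data.

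Third, I would decompose the kernel bound into the contribution of "good" pairs $(l,w)\in\mathfrak{G}_{B,\epsilon}$, on which the direct measure bound $|A_{B,\epsilon}(l,w)|\le 2^{-2\epsilon m}$ yields the saving, and "bad" pairs $(l,w)\in\mathfrak{H}_{B,\epsilon}$, for which the trivial per-pair bound is compensated by the cardinality gain $\sharp\mathfrak{H}_{B,\epsilon}^1\lesssim 2^{(1/2-\mu)m}$ from (\ref{sign}) instead of the naive $2^{m/2}$. Summing over $(l,w)$ and choosing $\epsilon_1=\tfrac{1}{2}\min(\epsilon,\mu)$ then delivers (\ref{aim1}). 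The non-degeneracy hypothesis $|\partial_t B|\gtrsim 1$ from (\ref{lower}) is ensured because $f'=1/(G'\circ f)$ is bounded on $j\in S_l$, while the condition $\epsilon m\ge 1$ is harmless for $m$ large.

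The main obstacle is the bookkeeping in the second step: one must carry out the stationary-phase reduction and then recognize the resulting almost-stationary equation exactly as $B(x,x+l)\approx w^{-1}$ in the form demanded by Lemma \ref{p1}, verifying that the shift $t=x+l$ and the power structure $F(t)^{\alpha_k}$ arise naturally from $f=G^{-1}$ and from the Taylor expansion of $H$ in the second slot. Crucially, the measurability-only assumption on $u$ forbids any integration-by-parts in the $x$-variable, so the entire gain must come from the combinatorial $\epsilon$-improvement of Lemma \ref{p1}, uniformly in the remaining parameters $(j,k)$.
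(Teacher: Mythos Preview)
Your high-level plan---$TT^\star$ at scale $2^{-m/2}$, good/bad decomposition, and invocation of Lemma~\ref{p1}---matches the paper's strategy. However, two points in your second step are not merely bookkeeping issues but genuine gaps. First, your direct spatial $TT^\star$ produces a kernel whose phase depends on \emph{both} $u(x)$ and $u(x_1)$; the paper avoids this by discretising $u(x)$ at the coarse scale $O(1)$ \emph{before} any $TT^\star$, replacing $u_{j,k,0}(x)$ by a lattice point $p_1\sim 2^m$ via Taylor expansion. Only after the phase has been frozen to depend on $p_1$ does the paper perform the $TT^\star$ (done on the frequency side, by Fourier-series expansion of $\widehat{f_{jk}}$), and then undo the discretisation via the mean-value theorem. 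Without this step, the almost-stationary condition will not collapse to $B(x,x+l)\approx w^{-1}$ for a single measurable datum; Lemma~\ref{p1} is formulated for one measurable coefficient family $\{a_k(x)\}$, not for a two-point object.

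Second, your identification $F=f=G^{-1}$ and the claim that the power structure comes from ``Taylor expansion of $H$'' are incorrect. A Taylor expansion would produce infinitely many terms, whereas Lemma~\ref{p1} requires a \emph{finite} sum. In the paper the relevant function is $B(x,t)=A_1(x)(G\circ\bar G^{-1})(t)$, so that $F=\bar G^{-1}$ (the inverse of the antiderivative $\bar G$, which is a polynomial of degree $\le N$), and the finite power structure $\sum_k a_k(x)F(t)^k$ is automatic precisely because $G$ itself is a polynomial. The non-degeneracy $|\partial_t B|\gtrsim 1$ then follows from $|G'|\sim|G|\sim|A_1|\sim 1$ on the support. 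With these two corrections---coarse discretisation of $u$ first, and the correct $B$ built from $\bar G^{-1}$---your outline becomes essentially the paper's proof.
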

The proof of Lemma \ref{l100} is postponed below. Assume  (\ref{aim1}) holds.
As a matter of fact, (\ref{l2}) equals that for all $g(x,y)\in L^2$ satisfying $\|g\|_2\le1$,
$
|\big\langle \sum_{j\in S_l}\sum_{k\in\Z}T_{j,k,m}f,g\big\rangle|
\lesssim\ 2^{-\epsilon_0 m}\|f\|_2.
$
Using
$$\widehat{\Phi}(\frac{\xi}{2^{jl+m}})
\widehat{\Phi}(\frac{\eta}{2^{k}})=\widehat{\Phi}(\frac{\xi}{2^{jl+m}})
\widehat{\Phi}(\frac{\eta}{2^{k}})\sum_{|j'-jl|\le 1}\sum_{|k'-k|\le1}
\widehat{\Phi}(\frac{\xi}{2^{j'+m}})
\widehat{\Phi}(\frac{\eta}{2^{k'}}),$$
denote
 \begin{equation}\label{dfn1}
 \widehat{f_{jk}}(\xi,\eta)=\sum_{|j'-jl|\le 1}\sum_{|k'-k|\le1}\widehat{\Phi}(\frac{\xi}{2^{j'+m}})
\widehat{\Phi}(\frac{\eta}{2^{k'}})\widehat{f}(\xi,\eta),
\end{equation}
 we have by H\"{o}lder's inequality
\begin{equation}\label{FT}
\begin{aligned}
|\big\langle \sum_{j,k}T_{j,k,m}f,g\big\rangle|
=&\ |\big\langle \sum_{j,k}T_{j,k,m}f,\ \sum_{|k'-k|\le 1}\widehat{\Phi}(\frac{u(x)}{2^{j-k'+m}})
(\tilde{\Phi}_k\ast_yg)\big\rangle|\\
=&\ |\big\langle \sum_{j,k}T_{j,k,m}(f_{jk}),\ \sum_{|k'-k|\le 1}\widehat{\Phi}(\frac{u(x)}{2^{j-k'+m}})
(\tilde{\Phi}_k\ast_yg)\big\rangle|\\
\lesssim&\ \big\|\big( \sum_{j,k}|T_{j,k,m}(f_{jk})|^2 \big)^{1/2}\big\|_2\ \big\|\Big[ \sum_{j,k} \sum_{|k-k'|\le 1}\widehat{\Phi}(\frac{u(x)}{2^{j-k'+m}})^2
|(\tilde{\Phi}_k\ast_yg)|^2\Big]^{1/2}\big\|_2.
\end{aligned}
\end{equation}
Utilizing
$$\sup_{k\in\Z}\sum_{j\in\Z}\sum_{|k-k'|\le 1}\widehat{\Phi}(\frac{u(x)}{2^{j-k'+m}})^2
\lesssim\ \sup_{k\in\Z}
\sum_{j\in\Z}\widehat{\Phi}(\frac{u(x)}{2^{j-k+m}})\lesssim 1,$$
 by Littlewood-Paley theorem, we obtain
$$\big\|\Big[ \sum_{j,k} \sum_{|k-k'|\le 1}\widehat{\Phi}(\frac{u(x)}{2^{j-k'+m}})^2
|\tilde{\Phi}_k\ast_yg|^2\Big]^{1/2}\big\|_2
\lesssim\ \|\big(\sum_{k\in\Z}|\tilde{\Phi}_k\ast_yg|^2\big)^{1/2}\big\|_2\lesssim\ \|g\|_2.$$
As for the first term on the RHS of (\ref{FT}), Fubini's theorem  and (\ref{aim1}) give
$$\big\|\big( \sum_{j,k}|T_{j,k,m}(f_{jk})|^2 \big)^{1/2}\big\|_2
\lesssim\ \big( \sum_{j,k}\|T_{j,k,m}(f_{jk})\big\|_2^2 \big)^{1/2}
\lesssim\ 2^{-\epsilon_0 m}\big(\sum_{j,k}\|f_{jk}\|_2^2 \big)^{1/2}\lesssim\ 2^{-\epsilon_0 m}\|f\|_2.$$
We complete the proof of Lemma \ref{ll2}  once Lemma \ref{l100} is proved.  Next, our goal is to prove Lemma \ref{l100}.
\begin{proof}[Proof of Lemma \ref{l100}]
We  first discretize  the part associated to $u(x)$ in the phase function $\phi_{j,\xi,\eta,x}(t)$.
Let $\varrho(x)$ be a non-negative smooth function which 
 is supported in $[-3/4,3/4]$, and  satisfies $\sum_{p_1\in\Z}\varrho(z-p_1)=1$ for all $z\in\R$. Denote
\begin{equation}\label{denote}
u_{j,k,m}(x):=\frac{u(x)}{2^{j-k+m}},\ \xi_{jlm}:=\frac{\xi}{2^{jl+m}},\ \eta_k:=\frac{\eta}{2^k}.
\end{equation}
Decompose $\widehat{\Phi}(u_{j,k,m}(x))$ as follows:
\begin{equation}\label{de1}
\begin{aligned}
&\ \widehat{\Phi}(u_{j,k,m}(x))
=\sum_{p_1\thicksim 2^m}\widehat{\Phi}(u_{j,k,m}(x))
\varrho(u_{j,k,0}(x)-p_1),\ \ {\rm where}\\
 &\ \{p_1\in \Z:\ p_1\thicksim 2^m\}=\{p_1\in\Z:\ 2^{m-1}\le p_1\le 2^{m+1}\}.
\end{aligned}
\end{equation}
Denote
$\widehat{\Phi_s}(\xi)=|\xi|^s\widehat{\Phi}(\xi)$ for $s\in\R$,
write
$$
\begin{aligned}
&\ T_{j,k,m}f(x,y)=\ \chi_{S_l}(j)
2^{-\frac{m}{2}}u_{j,k,m}(x)^{-1/2}\\
&\ \times\sum_{p_1\thicksim 2^m}
\int_{\R^2}e(\xi x+\eta y)\widehat{f}(\xi,\eta) \widehat{\Phi_{-\frac{1}{2}}}(\xi_{jlm})
\widehat{\Phi}(\eta_k)
\varrho(u_{j,k,0}(x)-p_1) \frac{e(\phi_{j,\xi,\eta,x}(t_{cx}))
}{|G'(t_{cx})|^{1/2}}\underline{\rho}(t_{cx})
d\xi d\eta.
\end{aligned}
$$
Because of  the support of $\varrho$, we will use $p_1$ to approximate $u_{j,k,0}(x)$. In fact, recall $\bar{G}(s)=\int_0^s G(\tau)d\tau$ and $f=G^{-1}$, denote
$$t_{cp_1}=t_{cp_1}(\xi,\eta):=f(-\frac{p_1\eta 2^{-k}}{\xi 2^{-jl}}),\
Q(s):=sf(-s)+\bar{G}\big(f(-s)\big),\ A(s):=\frac{\xi}{2^{jl}}
Q(\frac{s\eta 2^{-k}}{\xi 2^{-jl}}),$$
we have
$$
\begin{aligned}
e\Big(\phi_{j,\xi,\eta,x}(t_{cx})\Big)=&\
e\Big(\xi 2^{-jl} Q\big(\frac{u(x)2^{-j}\eta}{\xi 2^{-jl}}\big)\Big)
=\ e(A(u_{j,k,0}(x))-A(p_1))e(A(p_1))
\end{aligned}
$$
Using the fundamental theorem of  calculus, we have
$$A(u_{j,k,0}(x))-A(p_1)
=(u_{j,k,0}(x)-p_1)\int_0^1 A'(p_1+s(u_{j,k,0}(x)-p_1)) ds,$$
which yields for $\kappa\ge 0$
$$\big(A(u_{j,k,0}(x))-A(p_1)\big)^\kappa
=\ \big(u_{j,k,0}(x)-p_1\big)^\kappa \Big(\int_0^1 A'(p_1+s(u_{j,k,0}(x)-p_1)) ds\Big)^\kappa.$$
Since $|u_{j,k,0}(x)-p_1|\lesssim1$ and $\|A'(z)\|_{L^\infty(z\thicksim 2^m)}\lesssim1$, we deduce by Taylor's expansion that
$$e(A(u_{j,k,0}(x))-A(p_1))
=\sum_{\kappa=0}^\infty \frac{i^\kappa}{\kappa!}
A_\kappa(u_{j,k,0}(x)-p_1,
\frac{\xi}{2^{jl+m}},\frac{\eta}{2^k}), $$
for  smooth functions $\{A_\kappa(\cdot,\cdot,\cdot)\}_\kappa$ satisfying there exists an absolute constant $\tilde{C_0}$ such that
$$\|A_\kappa(\cdot,\cdot,\cdot)\|_{\mathcal{C}^2([-\frac{3}{4},\frac{3}{4}]
\times (\frac{1}{2},2)^2)}\le \tilde{C_0}^\kappa,$$
where $\mathcal{C}^2$ means the standard H\"{o}lder space $\mathcal{C}^\iota$ with $\iota=2$. To get the desired result,
it suffices to show the operator
$$
\begin{aligned}
T_{j,k,m,\kappa}f(x,y):=&\chi_{S_l}(j)
2^{-\frac{m}{2}}\sum_{p_1\thicksim 2^m}\varrho(u_{j,k,0}(x)-p_1)
\int_{\R^2}e(\xi x+\eta y)\widehat{f}(\xi,\eta) \widehat{\Phi}(\xi_{jlm})
\widehat{\Phi}(\eta_k)
e\big(A(p_1)\big)\\
&\ \times\Upsilon(\xi_{jlm}, \eta_k,u_{j,k,m}(x))A_\kappa(u_{j,k,0}(x)-p_1,
\xi_{jlm}, \eta_k)
d\xi d\eta,
\end{aligned}
$$
where $\Upsilon(\cdot,\cdot,\cdot)$ is a smooth function satisfying $\|\Upsilon(\cdot,\cdot,\cdot)\|_{\mathcal{C}^2}
\lesssim1$,
satisfies there exists an absolute $\tilde{C_1}>0$ such that
$\|T_{j,k,m,\kappa}f(x,y)\|_2\lesssim\ \tilde{C_1}^\kappa2^{-\epsilon_0 m}\|f\|_2.$
Due to the properties of $A_\kappa$ and $\Upsilon$,
we only focus on $\kappa=0$ since the estimates of
the cases $\kappa\ge 1$ have similar bounds, which  do not affect the convergence of the summation of $\kappa$.
Now, we pay attention to the operator
$$
\begin{aligned}
&\ T_{j,k,m,0}f(x,y):= \mathcal{T}_{j,k,m}f(x,y)=\chi_{S_l}(j)
2^{-\frac{m}{2}}\sum_{p_1\thicksim 2^m}\varrho(u_{j,k,0}(x)-p_1)\\
&\ \times\int_{\R^2}e(\xi x+\eta y)\widehat{f_{j,k}}(\xi,\eta) \widehat{\Phi}(\xi_{jlm})
\widehat{\Phi}(\eta_k)
e\big(A(p_1)\big) \Upsilon_1(u_{j,k,0}(x)-p_1, \xi_{jlm}, \eta_k)
d\xi d\eta,
\end{aligned}
$$
for some smooth function $\Upsilon_1$ satisfying a similar $\mathcal{C}^2$ estimate  as $\Upsilon$ and $A_0$. Here we have applied  (\ref{dfn1}).
Thus it is enough to prove
\begin{equation}\label{aim2}
\|\mathcal{T}_{j,k,m}f\|_2\lesssim\ 2^{-\epsilon_0 m}\|f\|_2.
\end{equation}
Taking the Fourier transform in $y$ variable, we have
$$
\begin{aligned}
&\ \mathcal{F}_y\{\mathcal{T}_{j,k,m}f\}(x,\eta)=
\chi_{S_l}(j)
2^{-\frac{m}{2}}\sum_{p_1\thicksim 2^m}\varrho(u_{j,k,0}(x)-p_1)\\
&\ \int_{\R}e(\xi x)\widehat{f_{j,k}}(\xi,\eta) \widehat{\Phi}(\xi_{jlm})
\widehat{\Phi}(\eta_k)
e\big(A(p_1)\big) \Upsilon_1(u_{j,k,0}(x)-p_1, \xi_{jlm}, \eta_k)
d\xi.
\end{aligned}
$$
 Plancherel's identity applied in the second variable gives
that (\ref{aim2}) equals
\begin{equation}\label{aim3}
\|\mathcal{F}_y\{\mathcal{T}_{j,k,m}f\}\|_2\lesssim\ 2^{-\epsilon_0 m}\|f\|_2.
\end{equation}
To obtain (\ref{aim3}), we need further to discrete the $\xi_{jlm}$ and $\eta_k$.
Using  a variant of the previous process, that is,
$$\widehat{\Phi}(\xi_{jlm})
=\sum_{w\thicksim -2^\frac{m}{2}}\widehat{\Phi}
(\xi_{jlm})
\varrho(\xi_{jl\frac{m}{2}}-w),\ \widehat{\Phi}(\eta_k)
=\sum_{v\thicksim 2^\frac{m}{2}}\widehat{\Phi}
(\eta_k)
\varrho(\eta_{k-\frac{m}{2}}-v),
$$
where $v\thicksim 2^\frac{m}{2}$ and
$w\thicksim -2^\frac{m}{2}$ is due to the properties of $\varrho(\cdot)$ and the assumption (\ref{jx}),
  we can see $\widehat{f_{jk}}(\xi,\eta)$
is supported in
$$[(w-\frac{3}{4})2^{jl+\frac{m}{2}},(w+\frac{3}{4})2^{jl+
\frac{m}{2}}]
\times\ [(v-\frac{3}{4})2^{k-\frac{m}{2}},(v+\frac{3}{4})2^{k-
\frac{m}{2}}],$$
and
can be extended to be a periodic function whose periods are 
$\frac{3}{2}2^{jl+\frac{m}{2}}$ for the first variable and $\frac{3}{2}2^{k-\frac{m}{2}}$ for the second variable. 
For convenience, in what follows we assume its periods are 
$2^{jl+\frac{m}{2}}$ for the first variable and $2^{k-\frac{m}{2}}$ for the second variable. 
Recall the notations (\ref{denote}), we have
$$
\begin{aligned}
 &\ \  \widehat{f_{jk}}(\xi,\eta)
 \varrho(\xi_{jl\frac{m}{2}}-w)
 \varrho(\eta_{k-\frac{m}{2}}-v)
  =\ \sum_{(s,l_1)\in\Z^2} a_{s,l_1}^{w,v}
  e^{il_1\xi_{jl\frac{m}{2}}}
  e^{is\eta_{k-\frac{m}{2}}}.
\end{aligned}
$$
Denote
$$\phi_{l_1,s}^{w,v}(\xi,\eta):=
\varrho(\xi_{jl\frac{m}{2}}-w)
  \varrho(\eta_{k-\frac{m}{2}}-v)
  e^{il_1\xi_{jl\frac{m}{2}}}
  e^{is\eta_{k-\frac{m}{2}}},
  $$
then
\begin{equation}\label{w1}
a_{s,l_1}^{w,v}:=\langle \widehat{f_{jk}},\phi_{l_1,s}^{w,v}\rangle_{{\L}^2},
\end{equation}
  where $\langle h,g\rangle_{{\L}^2}$ is defined by
  \begin{equation}\label{df}
\langle h,g\rangle_{{\L}^2}:=2^{-(jl+k)}\langle h,g\rangle=2^{-(jl+k)}\int_{[0,2^{jl+\frac{m}{2}}]\times
[0,2^{k-\frac{m}{2}}]} h(\xi,\eta) \bar{g}(\xi,\eta) d\xi d\eta.
\end{equation}
 Thus we have
  $$
  \begin{aligned}
  \widehat{f_{jk}}(\xi,\eta)=&\
  \sum_{w\thicksim -2^\frac{m}{2},
  v\thicksim 2^\frac{m}{2}}  \sum_{s,l_1} a_{s,l_1}^{w,v} e^{il_1\xi_{jl\frac{m}{2}}}
  e^{is\eta_{k-\frac{m}{2}}},
  \end{aligned}
  $$
  which, with the support of $\varrho(\cdot)$ and the definition of $a_{s,l_1}^{w,v}$ (\ref{w1}), yields
  $$\|\widehat{f_{jk}}\|_2^2
  =2^{jl+k}\sum_{w,v,s,l_1}|a_{s,l_1}^{w,v}|^2.$$
  So we rewrite $\mathcal{F}_y\{\mathcal{T}_{j,k,m}f\}$ as
  $$
\begin{aligned}
\mathcal{F}_y\{\mathcal{T}_{j,k,m}f\}(x,\eta):=
&\chi_{S_l}(j)
2^{-\frac{m}{2}}\sum_{p_1\thicksim 2^m}
\sum_{w,v,l_1,s}  \langle \widehat{f_{j,k}},\phi_{l_1,s}^{w,v} \rangle_{{\L}^2}
A_{l_1,s}^{w,v,p_1}(x,\eta),
  \end{aligned}
  $$
  where
$$
  \begin{aligned}
  A_{l_1,s}^{w,v,p_1}(x,\eta)&:=
\int_{\R}e(\xi x) e^{il_1\xi_{jl\frac{m}{2}}}
  e^{is\eta_{k-\frac{m}{2}}}e\big(A(p_1)\big)\varrho(\xi_{jl\frac{m}{2}}-w)
\varrho(\eta_{k-\frac{m}{2}}-v) \Upsilon_1(u_{j,k,0}(x)-p_1, \xi_{jlm}, \eta_k)
d\xi.
\end{aligned}
$$
To obtain the $L^2$ estimate of $\mathcal{F}_y\{\mathcal{T}_{j,k,m}f\}(x,\eta)$, we first consider the estimate of
$$
\begin{aligned}
&\ \ \|\sum_{w,v,l_1,s}  \langle \widehat{f_{j,k}},\phi_{l_1,s}^{w,v} \rangle_{{\L}^2}
A_{l_1,s}^{w,v,p_1}(x,\eta)\|_{L^2_{x,\eta}}^2\\
=&\  \sum_{w,v,l_1,s}\sum_{w',v',l_1',s'}\int_x\int_{\eta}  \langle \widehat{f_{j,k}},\phi_{l_1,s}^{w,v} \rangle_{{\L}^2}
A_{l_1,s}^{w,v,p_1}(x,\eta)\overline{\langle \widehat{f_{j,k}},\phi_{l_1',s'}^{w',v'} \rangle_{{\L}^2}
A_{l_1',s'}^{w',v',p_1}(x,\eta)}
d\eta dx\\
=&\ \sum_{w,v,l_1,s}
\sum_{w',l_1',s'}
\langle \widehat{f_{j,k}},\phi_{l_1,s}^{w,v} \rangle_{{\L}^2}
\overline{\langle \widehat{f_{j,k}},\phi_{l_1',s'}^{w',v} \rangle_{{\L}^2}}\int_x \Omega (x) dx,
\end{aligned}
$$
where
$$\Omega(x)=\int_\eta A_{l_1,s}^{w,v,p_1}(x,\eta)
\overline{A_{l_1',s'}^{w',v,p_1}(x,\eta)}\ d\eta $$
and we have used the orthogonality of the support  of $\Upsilon_1$ to absorb  $\sum_{v'\thicksim 2^\frac{m}{2}}$.
\vskip.1in
Denote $\xi'_{jlm}=\frac{\xi'}{2^{jl+m}}$, 
$$
B^{w,v}(x,p_1,\xi,\eta)=
\Upsilon_1(u_{j,k,0}(x)-p_1,\frac{\xi}{2^\frac{m}{2}}, \frac{\eta}{2^\frac{m}{2}})\varrho(\xi_{jl\frac{m}{2}}-w)
\varrho(\eta_{k-\frac{m}{2}}-v),
$$
then rewrite $\Omega(x)$ as
$$
\begin{aligned}
\Omega(x)=&\int_\eta\int_{\xi,\xi'} e((\xi-\xi')x) e^{il_1\xi_{jl\frac{m}{2}}-il_1'\xi'_{jl\frac{m}{2}}}e^{is\eta_{k-\frac{m}{2}}-is'
\eta_{k-\frac{m}{2}}}  e^{i\Big(\xi_{jl0} Q(\frac{p_12^{-m}\eta_k}{\xi_{jlm}
  })-\xi'_{jl0}
  Q(\frac{p_12^{-m}\eta_k}{\xi'_{jlm}}
)\Big)}\\
  &\ \
    B^{w,v}(x,p_1,\xi_{jl\frac{m}{2}},
  \eta_{k-\frac{m}{2}})
  \overline{B^{w',v}(x,p_1,\xi'_{jl\frac{m}{2}},
  \eta_{k-\frac{m}{2}})} \ d\xi d\xi'\ d\eta
\end{aligned}
$$
Changing the variable $\xi\rightarrow 2^{jl+\frac{m}{2}}\xi$,
 $\eta\rightarrow 2^{k-\frac{m}{2}}\eta$,
 $\xi'\rightarrow 2^{jl+\frac{m}{2}}\xi'$, rewrite $\Omega(x)$ as
 $$
\begin{aligned}
\Omega(x)=&\ 2^{2jl+\frac{m}{2}+k}\int_\eta\int_{\xi,\xi'} e^{i2^{jl+\frac{m}{2}}(\xi-\xi')x} e^{il_1\xi
+is\eta}
 e^{-il_1'\xi'-is'\eta}   e^{i\Big(\xi2^\frac{m}{2}
Q(\frac{p_1\eta_m}{\xi})-\xi'2^\frac{m}{2}
  Q(\frac{p_1\eta_m}{\xi'})\Big)}\\
  &\ \
    B^{w,v}(x,p_1,\xi,
  \eta)
  \overline{B^{w,v}(x,p_1,\xi',
  \eta)} d\xi d\xi' d\eta.
\end{aligned}
$$
Next, we give the estimate of $|\Omega(x)|$. Denote
$$\Theta_{v,w,l_1}(x):=2^\frac{m}{2} (\bar{G}\circ f)(-\frac{p_1v
}{{2^m}w})+2^{jl+\frac{m}{2}}x+l_1,$$
$$\Theta_{v,w,w',s,s'}(x):=
\frac{p_1}{2^\frac{m}{2}}\big(f(-\frac{p_1v
}{{2^m}w})
-f(-\frac{p_1v
}{{2^m}w'})\big)+s-s',$$
we have the following lemma providing useful pointwise estimate of $\Omega(x)$.
\begin{lemma}\label{l6.1}
\begin{equation}\label{eo}
\begin{aligned}
|\Omega(x)|
\lesssim&\ 2^{2jl+\frac{m}{2}+k}
\varrho(\frac{u(x)}{2^{j-k}}-p_1)
\frac{\min\{1,(\Theta_{v,w,l_1}(x))^{-2}\}
\min\{1,(\Theta_{v,w',l_1'}(x))^{-2}\}}
{1+(\Theta_{v,w,w',s,s'}(x))^2}.
\end{aligned}
\end{equation}
\end{lemma}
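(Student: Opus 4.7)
The plan is to use repeated integration by parts in the three variables $\xi$, $\xi'$, and $\eta$, exploiting that after the rescaling, the integrand is compactly supported in a region of unit volume around $(w,w',v)$ while the phase has large, well-understood derivatives. First I would identify the phase
$$\psi(\xi,\xi',\eta):=2^{jl+\frac{m}{2}}(\xi-\xi')x+l_1\xi-l_1'\xi'+(s-s')\eta+2^{\frac{m}{2}}\bigl[\xi\, Q(p_1\eta_m/\xi)-\xi'\, Q(p_1\eta_m/\xi')\bigr].$$
Using $Q(s)=sf(-s)+\bar{G}(f(-s))$ together with the key identity $G(f(-s))=-s$ (which holds since $f=G^{-1}$), one quickly computes $Q'(s)=f(-s)$ and $Q(s)-sQ'(s)=\bar{G}(f(-s))$. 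These identities give clean expressions for the partial derivatives:
\begin{align*}
\partial_\xi\psi &= 2^{jl+\frac{m}{2}}x+l_1+2^{\frac{m}{2}}\bar{G}\bigl(f(-p_1\eta_m/\xi)\bigr),\\
\partial_{\xi'}\psi &= -\bigl[2^{jl+\frac{m}{2}}x+l_1'+2^{\frac{m}{2}}\bar{G}\bigl(f(-p_1\eta_m/\xi')\bigr)\bigr],\\
\partial_\eta\psi &= (s-s')+2^{-\frac{m}{2}}p_1\bigl[f(-p_1\eta_m/\xi)-f(-p_1\eta_m/\xi')\bigr].
\end{align*}
Evaluating at the "centres" $\xi=w$, $\xi'=w'$, $\eta=v$ of the supports recognizes these three quantities as $\Theta_{v,w,l_1}(x)$, $-\Theta_{v,w',l_1'}(x)$, and $\Theta_{v,w,w',s,s'}(x)$ respectively.

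Next I would verify that the higher-order and mixed derivatives of $\psi$ that arise in iterated integration by parts are uniformly $O(1)$ on the unit-size supports of the $\varrho$ cut-offs. Recalling $p_1\sim 2^m$, $|w|, v\sim 2^{m/2}$, and $\eta_m\sim 2^{-m/2}$, the identity $G(f(-s))=-s$ yields for example
$$\partial_\xi^2\psi = -2^{\frac{m}{2}}\frac{(p_1\eta_m)^2\, f'(-p_1\eta_m/\xi)}{\xi^3} = O(1),$$
and analogous computations give $\partial_\eta^2\psi,\partial_\xi\partial_\eta\psi=O(1)$. Hence on the relevant supports one has $\partial_\xi\psi=\Theta_{v,w,l_1}(x)+O(1)$, $\partial_{\xi'}\psi=-\Theta_{v,w',l_1'}(x)+O(1)$, and $\partial_\eta\psi=\Theta_{v,w,w',s,s'}(x)+O(1)$. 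The amplitudes $B^{w,v}$ depend on $\xi,\eta$ through $\xi/2^{m/2},\eta/2^{m/2}$, so each derivative of the amplitudes gains a small factor $2^{-m/2}$ and cannot spoil the estimate.

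Given these bounds, I would apply the standard identity $e^{i\psi}=(i\partial_\xi\psi)^{-1}\partial_\xi e^{i\psi}$ twice in $\xi$ on the region $|\Theta_{v,w,l_1}(x)|\gg 1$, picking up the factor $(\Theta_{v,w,l_1}(x))^{-2}$; on the complementary region the trivial $O(1)$ bound produces the same $\min\{1,(\Theta_{v,w,l_1}(x))^{-2}\}$. The identical procedure in $\xi'$ yields $\min\{1,(\Theta_{v,w',l_1'}(x))^{-2}\}$, and integration by parts in $\eta$ yields $(1+(\Theta_{v,w,w',s,s'}(x))^2)^{-1}$. Multiplying the three decays together with the Jacobian factor $2^{2jl+m/2+k}$ from the rescaling produces the claimed bound; the cut-off $\varrho(u(x)/2^{j-k}-p_1)$ is inherited from the outer $\varrho(u_{j,k,0}(x)-p_1)$ factor present in the definition of $\mathcal{T}_{j,k,m}f$.

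The main obstacle is the bookkeeping in the triple integration by parts: since $\partial_\xi\psi$ depends on $\eta$ through $\eta_m$, dividing by $\partial_\xi\psi$ and then differentiating in $\xi$ or $\eta$ produces error terms of the form $\partial_\xi\psi^{-1}\cdot(\partial_\xi^a\partial_\eta^b\partial_\xi\psi)$ which one must control uniformly. The scaling computations above show $\partial_\xi^a\partial_\eta^b\partial_\xi\psi=O(1)$ for all $a+b\le 2$, which ensures $|\partial_\xi^a\partial_\eta^b(\partial_\xi\psi)^{-1}|\lesssim |\partial_\xi\psi|^{-1}$ and allows the three integrations to be carried out without the decay factors interfering with one another.
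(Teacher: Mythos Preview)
Your proposal is correct and follows essentially the same strategy as the paper: integrate by parts twice in $\xi$, twice in $\xi'$, and then in $\eta$, using the identities $Q'(s)=f(-s)$ and $Q(s)-sQ'(s)=(\bar G\circ f)(-s)$ to identify the three partial derivatives of the phase with $\Theta_{v,w,l_1}(x)$, $-\Theta_{v,w',l_1'}(x)$, and $\Theta_{v,w,w',s,s'}(x)$ at the centres of the cut-offs.

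The one technical difference is in how the $\eta$-dependence of $\partial_\xi\psi$ is handled when passing from the $\xi$-integration to the $\eta$-integration. You control this directly by checking that all mixed derivatives $\partial_\xi^a\partial_\eta^b\psi$ with $1\le a+b\le 3$ are $O(1)$ on the support, so that $(\partial_\xi\psi)^{-1}$ and its $\eta$-derivatives remain $\lesssim |\Theta_{v,w,l_1}(x)|^{-1}$. The paper instead, after integrating by parts in $\xi$, expands $(\partial_\xi\psi)^{-2}=(W'_{\eta,x}(\xi))^{-2}$ as a geometric-type series around the constant $(W'_{v,x}(w))^{-2}$ via $(1-z)^{-2}=\sum_{k\ge 1}kz^{k-1}$, thereby \emph{freezing} the $\eta$-dependence of the decay factor before carrying out the $\eta$-integration. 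Both devices achieve the same decoupling; your route is somewhat more economical, while the paper's series expansion makes the separation of the three decay factors completely explicit at the cost of an auxiliary summation.
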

The proof is postponed in the section \ref{slemma}. We continue the proof of
(\ref{aim1}). By using the support  of $\Upsilon_1(u_{j,k,0}(x)-p_1,\xi_{jlm}, \eta_k)$ and
the mean value theorem, we have
  $$
\begin{aligned}
&\ \ \|\mathcal{F}_y\{
\mathcal{T}_{j,k,m}f\}(x,\eta)\|_{2}^2\\
\lesssim&\ \chi_{S_l}(j)
2^{-m}\sum_{p_1\thicksim 2^m}
\sum_{w,v,l_1,s}
\sum_{w',l_1',s'}
\langle \widehat{f_{j,k}},\phi_{l_1,s}^{w,v} \rangle_{{\L}^2}
\overline{\langle \widehat{f_{j,k}},\phi_{l_1',s'}^{w',v} \rangle_{{\L}^2}}|\int_\R \Omega (x) dx|\\
\lesssim&\ 2^{2jl+\frac{m}{2}+k}\chi_{S_l}(j)
2^{-m}
\sum_{w,v,l_1,s}
\sum_{w',l_1',s'}
\langle \widehat{f_{j,k}},\phi_{l_1,s}^{w,v} \rangle_{{\L}^2}
\overline{\langle \widehat{f_{j,k}},\phi_{l_1',s'}^{w',v} \rangle_{{\L}^2}}\\
&\ |\int_\R
\varrho(\frac{u(x)}{2^{j-k+m}})
\frac{1}{1+(\tilde{\Theta}_{v,w,l_1}(x))^2}
\frac{1}{1+(\tilde{\Theta}_{v,w',l_1'}(x))^2}
\frac{1}{1+(\tilde{\Theta}_{v,w,w',s,s'}(x))^2} dx|,
\end{aligned}
$$
where $\tilde{\Theta}_{v,w,l_1}(x)$ and $\tilde{\Theta}_{v,w,w',s,s'}(x)$ are the previous $\Theta_{v,w,l_1}(x)$ and $\Theta_{v,w,w',s,s'}(x)$ with $p_1$ replaced by
$\frac{u(x)}{2^{j-k}}$, respectively.
This process is valid since the scale $2^m$ of $p_1$ is larger than the scale $2^{m/2}$ of $w$ or $v$.
Recall (\ref{df}),
denote
$(\sum_{s}\langle \widehat{f},\phi_{l_1,s}^{w,v}\rangle^2)^\frac{1}{2}
=C_{l_1}^{w,v},\ (\sum_{s'}\langle \widehat{f},\phi_{l_1',s'}^{w',v}\rangle^2)^\frac{1}{2}
=C_{l_1'}^{w',v},$
we further obtain
  $$
\begin{aligned}
\|\mathcal{F}_y\{
\mathcal{T}_{j,k,m}f\}(x,\eta)\|_{2}^2
\lesssim&\ 2^{-\frac{m}{2}}\chi_{S_l}(j)
\sum_{w,v,l_1}
\sum_{w',l_1'}2^{-k}\int_x \varrho(\frac{u(x)}{2^{j-k+m}})
\frac{1}{1+(\tilde{\Theta}_{v,w,l_1}(x))^2}
\frac{1}{1+(\tilde{\Theta}_{v,w',l_1'}(x))^2}\\
&\ \Big\{\sum_{s,s'} \frac{1}{1+(\tilde{\Theta}_{v,w,w',s,s'}(x))^2}
|\langle \widehat{f_{j,k}},\phi_{l_1,s}^{w,v} \rangle|
|\langle \widehat{f_{j,k}},\phi_{l_1',s'}^{w',v'} \rangle|\Big\}dx.
\end{aligned}
$$
The summation in brace is bounded by
$$
\begin{aligned}
 &\ (\sum_{s}|\langle \widehat{f_{j,k}},\phi_{l_1,s}^{w,v} \rangle|^2)^\frac{1}{2} \Big(\sum_s\big(\sum_{s_1} \frac{1}{1+(\tilde{\Theta}_{v,w,w',s,s'}(x))^2}
|\langle \widehat{f_{j,k}},\phi_{l_1',s'}^{w',v'} \rangle|\big)^2\Big)^\frac{1}{2}
\lesssim\ C_{l_1}^{w,v} C_{l_1'}^{w',v},
\end{aligned}
$$
where we have used Young's inequality for series.
Consequently,
$$
\begin{aligned}
&\ \|\mathcal{F}_y\{
\mathcal{T}_{j,k,m,0}f\}(x,\eta)\|_{2}^2\\
\lesssim&\ 2^{-m/2-k}\chi_{S_l}(j)\sum_{w,v,l_1}\sum_{w',l'_1}
\int_x \varrho(\frac{u(x)}{2^{j-k+m}})
\frac{1}{1+(\tilde{\Theta}_{v,w,l_1}(x))^2}
\frac{1}{1+(\tilde{\Theta}_{v,w',l_1'}(x))^2}C_{l_1}^{w,v} C_{l_1'}^{w',v}
dx\\
\lesssim&\ 2^{-m/2-k}\chi_{S_l}(j)\sum_{v\thicksim 2^\frac{m}{2}}\int_x \varrho(\frac{u(x)}{2^{j-k+m}})
(\sum_{w,l_1}\frac{1}{1+(\tilde{\Theta}_{v,w,l_1}(x))^2}
C_{l_1}^{w,v} )^2
dx.
\end{aligned}
$$
Denote
$\tilde{C}_{l_1}^{w,v}=2^{-\frac{jl+k}{2}}C_{l_1}^{w,v} $
then
$
\|\mathcal{F}_y\{
\mathcal{T}_{j,k,m}f\}(x,\eta)\|_{2}^2
\lesssim\ \sum_{v\thicksim 2^\frac{m}{2}}\Re_{1v}^2,
$
where
\begin{equation}\label{key1}
\Re_{1v}:=\Big(2^{jl-\frac{m}{2}}
 \int_x \chi_{S_l}(j) \varrho(\frac{u(x)}{2^{j-k+m}})
(\sum_{w,l_1}\frac{\tilde{C}_{l_1}^{w,v}}{1+(\tilde{\Theta}_{v,w,l_1}(x))^2}
)^2
dx\Big)^\frac{1}{2}.
\end{equation}
To get the desired result, we  need a lemma
whose proof is
based on Lemma \ref{p1} and 
 postponed at the end of this subsection.
\begin{lemma}\label{cha}
There exists a constant $\epsilon_2>0$ such that
\begin{equation}\label{chacha}
\Re_{1v}^2\lesssim\ 2^{-\epsilon_2 m}\sum_{w, l_1}|\tilde{C}_{l_1}^{w,v}|^2.
\end{equation}
\end{lemma}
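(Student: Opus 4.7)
The plan is to interpret \eqref{key1} as an $L^2$-norm squared of an operator $T_v$ applied to the sequence $\{\tilde{C}_{l_1}^{w,v}\}_{w,l_1}$, and then establish the operator-norm bound $\|T_v\|^2 \lesssim 2^{-\epsilon_2 m}$ via a good/bad decomposition in $x$ controlled by Lemma \ref{p1}. Concretely, set $d\nu_v(x) := 2^{jl-m/2}\chi_{S_l}(j)\varrho(u(x)/2^{j-k+m})\,dx$ and $(T_v\tilde{C})(x) := \sum_{(w,l_1)} \tilde{C}_{l_1}^{w,v}(1+\tilde{\Theta}_{v,w,l_1}(x)^2)^{-1}$, so that $\Re_{1v}^2 = \|T_v\tilde{C}\|_{L^2(d\nu_v)}^2$ and the target \eqref{chacha} is exactly the bound $\|T_v\|^2 \lesssim 2^{-\epsilon_2 m}$.

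The core step is a $TT^*$/Schur analysis exploiting that, for each $w$, only $O(1)$ integers $l_1$ contribute significantly to $(1+\tilde{\Theta}_{v,w,l_1}(x)^2)^{-1}$, the dominant choice being $l_1 \approx -a_w(x)$ with
\[
a_w(x) := 2^{m/2}(\bar G\circ f)\bigl(-u(x)v/(2^{j-k+m}w)\bigr) + 2^{jl+m/2}x.
\]
The naive Schur bound summing $O(1)$ over the $\sim 2^{m/2}$ values of $w$ yields no decay in $m$, so one seeks to show that outside a small bad set in $x$ the effective number of ``active'' $(w,l_1)$ pairs is only $\lesssim 2^{(1/2-\mu)m}$, with $\mu=\mu(N)>0$ taken from Lemma \ref{p1}.

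To pin down the bad set, call $(w,l_1)$ \emph{$\epsilon$-resonant at $x$} if $|\tilde{\Theta}_{v,w,l_1}(x)|\le 2^{-\epsilon m}$; equivalently, $(\bar G\circ f)(-u(x)v/(2^{j-k+m}w))$ lies within $2^{-(1/2+\epsilon)m}$ of a point of the lattice $2^{-m/2}\Z - 2^{jl}x$. Let $\mathcal{B}_v$ be the set of $x\in\mathrm{supp}\,d\nu_v$ at which more than $2^{(1/2-\mu)m}$ values of $w$ are $\epsilon$-resonant. This lattice-proximity condition is precisely of the form treated in Lemma \ref{p1}: match $w\in 2^{-m/2}\Z$ to the shift $l\in\mathfrak{D}_m$, the discrete targets to $w^{-1}\in D_m^{-1}$, and $\bar G\circ f$ to $B(x,t)=\sum_k a_k(x)F(t)^{\alpha_k}$ (which is legitimate because $f=G^{-1}$ for the polynomial $G$, and the $S_l$-restriction together with the support constraint $|u(x)|/2^{j-k+m}\sim 1$ delivers the nondegeneracy $\inf_x|\partial_t B|\gtrsim 1$). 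Lemma \ref{p1} then yields $\sharp\mathfrak{H}_{B,\epsilon}^1\lesssim 2^{(1/2-\mu)m}$, and a double-counting of the pairs $(w,x)$ -- pigeonholing over the shifts $l_1\in\Z$ -- converts this cardinality bound into a measure estimate of the form $\nu_v(\mathcal{B}_v)\lesssim 2^{-\mu' m}\nu_v(\mathrm{supp}\,d\nu_v)$ for some $\mu'>0$.

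Combining -- on $\mathcal{B}_v^c$ the enhanced kernel-sum bound $2^{(1/2-\mu)m}$ takes over, while on $\mathcal{B}_v$ the trivial $2^{m/2}$ bound is absorbed by the measure control -- and choosing $\epsilon_2 := \tfrac12\min(\mu,\mu')$ completes \eqref{chacha}. The principal obstacle is the last translation: converting the cardinality statement on the discrete index set $\mathfrak{H}^1_{B,\epsilon}$ into a Lebesgue-measure bound on $\mathcal{B}_v$ in physical $x$-space. This step is delicate precisely because $u(x)$ is merely measurable, so no change of variables in $x$ is admissible, and the whole gain must be extracted through the $\epsilon$-improving structure of Lemma \ref{p1} (itself proved via Lemma \ref{l31}).
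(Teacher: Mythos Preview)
Your proposal identifies the right tool (Lemma \ref{p1}) and the right mechanism (a good/bad split with Schur-type counting), but the decomposition is taken in the wrong variable, and this is exactly why the ``principal obstacle'' you flag at the end remains unresolved. The paper does \emph{not} split $x$-space into $\mathcal{B}_v$ and $\mathcal{B}_v^c$; it splits the \emph{index set} of pairs $(l_1,w)$ into a ``light'' part $\mathfrak{L}_{G,\epsilon}$ (those $(l_1,w)$ for which the resonant $x$-set has measure $\le 2^{-2\delta m}$) and a ``heavy'' part $\mathfrak{H}_{G,\epsilon}$. For the light pairs one integrates in $x$: either $x$ is resonant (a set of small measure) or $x$ is off-resonance (the kernel is $\le 2^{-2\delta m}$), and in both cases one gains $2^{-2\delta m}$ without ever needing a measure bound on a global bad set. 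For the heavy pairs, Lemma \ref{p1} bounds the number of \emph{projected} indices $l_1$ by $2^{(1/2-\nu_0)m}$, and this cardinality feeds directly into the Cauchy--Schwarz/Schur step---again, no conversion to $x$-measure is needed. Your double-counting suggestion (``many $x$ resonate with many $w$'') is the dual statement and does not follow from Lemma \ref{p1} as written; the lemma says ``few $l$ have large resonant $x$-sets,'' which is what the parameter-side split uses verbatim.

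There is a second, related gap in how you match variables to Lemma \ref{p1}. You propose to identify $w$ with the shift $l\in\mathfrak{D}_m$, but the resonance condition you write, namely that $(\bar G\circ f)(-u(x)v/(2^{j-k+m}w))$ be close to a point of $2^{-m/2}\Z-2^{jl}x$, is \emph{not} of the form $|B(x,x+l)-w^{-1}|\le 2^{-(1/2-2\epsilon)m}$ required by the lemma: the function is evaluated at a point depending on $u(x)/w$, not at $x+l$. The paper makes a genuine transformation here: after a preliminary trichotomy on $l_1+2^{jl+m/2}x$ (the two tail ranges are handled trivially with gain $2^{-m/2}$), on the diagonal range one uses that $\bar G$ is monotone to invert, rewriting the kernel denominator as $1+2^m\bigl(A_1(x)(G\circ\bar G^{-1})(x+l_1)+w^{-1}\bigr)^2$ with $|A_1(x)|\sim 1$. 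Now $B(x,t):=A_1(x)(G\circ\bar G^{-1})(t)$ is exactly of the shape in Lemma \ref{p1}, with $l_1$ (not $w$) playing the role of the shift. Without this inversion step the hypothesis $\inf_x|\partial_t B(x,t)|\gtrsim 1$ of Lemma \ref{p1} is not available, and the lemma cannot be invoked.
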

Thanks to this lemma, we  complete the proof of Lemma \ref{l100} by Plancherel's identity and setting $\epsilon_1=\epsilon_2$.
\end{proof}

\begin{proof}[Proof of Lemma \ref{cha}]
Recall
$\tilde{\Theta}_{v,w,l_1}(x):=2^\frac{m}{2}(\bar{G}\circ f)(-\frac{u_{j,k,m}(x) v}{w})+2^{jl+\frac{m}{2}}x+l_1.$
Due to the definitions of $\bar{G}$ and $f$, it is not difficult to see that there exist two  constants $c_3>0$ and $C_3>0$ such that
\begin{equation}\label{decay1}
c_3\le (\bar{G}\circ f)(-\frac{u_{j,k,m}(x) v}{w})\le C_3.
\end{equation}
Denote $I_{j\kk}=[\kk-\frac{1}{2},\kk+\frac{1}{2}]2^{-jl}$, then $\R=\sum_{\kk\in\Z}I_{j\kk}$.
Then we split  $\sum_{l_1\in\Z}$ in (\ref{key1}) into three parts, and further bound  $\Re_{1v}^2$ by an absolute  constant times the summation of
$$\Re_{1v1}^2=2^{jl-\frac{m}{2}}
  \sum_{\kk\in\Z}\int_{I_{j\kk}}  \chi_{S_l}(j) \varrho(\frac{u(x)}{2^{j-k+m}})
(\sum_{w\thicksim -2^\frac{m}{2}}\sum_{l_1<(-x2^{jl}-C_3) 2^\frac{m}{2}}
\frac{\tilde{C}_{l_1}^{w,v}}{
1+(\tilde{\Theta}_{v,w,l_1}(x))^2}
)^2
dx,$$
$$\Re_{1v2}^2=2^{jl-\frac{m}{2}}
 \sum_{\kk\in\Z}\int_{ I_{j\kk}} \chi_{S_l}(j) \varrho(\frac{u(x)}{2^{j-k+m}})
(\sum_{w\thicksim -2^\frac{m}{2}}\sum_{l_1>(-x2^{jl} -c_3) 2^\frac{m}{2}}
\frac{\tilde{C}_{l_1}^{w,v}}{
1+(\tilde{\Theta}_{v,w,l_1}(x))^2}
)^2
dx,$$
and
$$\Re_{1v3}^2=2^{jl-\frac{m}{2}}
 \sum_{\kk\in\Z}\int_{ I_{j\kk}} \chi_{S_l}(j) \varrho(\frac{u(x)}{2^{j-k+m}})
(\sum_{w\thicksim -2^\frac{m}{2}}\sum_{-C_32^\frac{m}{2}\le l_1+2^{jl+\frac{m}{2}}x\le -c_3 2^\frac{m}{2}}
\frac{\tilde{C}_{l_1}^{w,v}}{
1+(\tilde{\Theta}_{v,w,l_1}(x))^2}
)^2
dx.$$
For $\Re_{1v1}^2$, changing the variable
$x\rightarrow 2^{-jl}(\kk+x)=:y_x$
 gives
$$
\begin{aligned}
\Re_{1v1}^2=&\ 2^{-\frac{m}{2}}
\int_{-\frac{1}{2}}^\frac{1}{2} \chi_{S_l}(j)\varrho(\frac{u(y_x)}{2^{j-k+m}})
\underbrace{\sum_{\kk\in\Z}(\sum_{w\thicksim -2^\frac{m}{2}}\sum_{l_1<(-\kk-x-C_3) 2^\frac{m}{2}}
\frac{\tilde{C}_{l_1}^{w,v}}{
1+(\tilde{\Theta}_{v,w,l_1}(y_x))^2}
)^2}_{=:\Xi_0}
dx.
\end{aligned}
$$
Let us denote
$B_{x,k'}:=[-k'-1-x-C_3,-k'-x-C_3]
2^\frac{m}{2},$
so $\sharp\{l\in\Z:\  l\in B_{x,k'}\}\lesssim\ 2^\frac{m}{2}$,
which implies
$$
\begin{aligned}
  \Xi_0=&\ \chi_{S_l}(j)\sum_{\kk\in\Z}(\sum_{w\thicksim -2^\frac{m}{2}}\sum_{k'\ge \kk}\sum_{l_1\in B_{x,k'}}
\frac{\tilde{C}_{l_1}^{w,v}}{
1+(\tilde{\Theta}_{v,w,l_1}(y_x))^2}
)^2
\lesssim\ \sum_{\kk\in\Z}(\sum_{k'\ge \kk}
\frac{\sum_{w\thicksim -2^\frac{m}{2}}\sum_{l_1\in B_{x,k'}}\tilde{C}_{l_1}^{w,v}}{
1+2^m|k'-\kk|^2}
)^2\\
\lesssim&\ \sum_{\kk\in\Z}(\sum_{w\thicksim -2^\frac{m}{2}}\sum_{l_1\in B_{x,\kk}}\tilde{C}_{l_1}^{w,v})^2
\big(\sum_{\kk\ge 0}\frac{1}{1+2^m|\kk|^2}\big)^2
\lesssim\  \sum_{w, l_1}|\tilde{C}_{l_1}^{w,v}|^2,
\end{aligned}
$$
where
we have used Young's inequality for series for the second inequality.
Inserting this estimate into the above integral of $\Re_{1v1}^2$ yields
$\Re_{1v1}^2\lesssim\ 2^{-m/2}\sum_{w, l_1}|\tilde{C}_{l_1}^{w,v}|^2. $
Similarly, we have
$\Re_{1v2}^2\lesssim\ 2^{-m/2}\sum_{w, l_1}|\tilde{C}_{l_1}^{w,v}|^2. $
For the last term $\Re_{1v3}^2$, by the mean value theorem and change the variable
$x\rightarrow 2^{-jl}(\kk+x):=y_x$ again, we obtain
$$\begin{aligned}
\Re_{1v3}^2
\lesssim&\ 2^{-\frac{m}{2}}
 \sum_{\kk\in\Z}\int_{-\frac{1}{2}}^\frac{1}{2} \chi_{S_l}(j) \varrho(u_{j,k,m}(y_x))\\
&\ \times(\sum_{w\thicksim -2^\frac{m}{2}}\sum_{
2^{-\frac{m}{2}}l_1+x+\kk\thickapprox-1  }
\frac{\tilde{C}_{l_1}^{w,v}}{
1+\Big(2^\frac{m}{2}\big((G\circ \bar{G}^{-1})(-2^{-\frac{m}{2}}l_1-x-\kk)+\frac{vu_{j,k,m}(x)
}{w}\big)\Big)^2}
)^2
dx.
\end{aligned}
$$
Here $2^{-\frac{m}{2}}l_1+x+\kk\thickapprox-1 $ means
$-C_3\le2^{-\frac{m}{2}}l_1+x+\kk\le -c_3$ ($c_3$ and $C_3$ are defined as in (\ref{decay1})).
Define
$$\mathfrak{R}_m^{(1)}=[-C_3-1/2,C_3+1/2]\cap 2^{-\frac{m}{2}}\Z,\ \ \mathfrak{R}_m^{(2)}=[-1,-1/2]\cap 2^{-\frac{m}{2}}\Z,$$
we now only need  to show that there exists $\delta_0>0$ such that for any measurable function $A_1(x)$ satisfying $|A_1(x)|\thicksim1$,
\begin{equation}\label{dis}
\tilde{R}\lesssim\ 2^{(1/2-\delta_0)m}\sum_{w, l_1}|\tilde{C}_{l_1}^{w,v}|^2,
\end{equation}
 where
$$\tilde{R}:=\sum_{\kk\in\Z}\int_{-\frac{1}{2}}^\frac{1}{2} \chi_{S_l}(j) \varrho(u_{j,k,m}(y_x))
(\sum_{(l_1,w)\in \mathfrak{R}_m^{(1)}\times \mathfrak{R}_m^{(2)}}
\frac{\tilde{C}_{2^\frac{m}{2}(l_1-\kk)}^{2^{\frac{m}{2}}w,v}
\chi_{\thickapprox-1}(l_1+x)}{
1+2^m\Big(A_1(x)(G\circ \bar{G}^{-1})(l_1+x)+w^{-1}\Big)^2}
)^2
dx.$$
Let $\delta$ be a small enough positive constant depending only on $N$.
Denote
$$
\mathfrak{L}_{G,\epsilon}:=
\{(l_1,w)\in \mathfrak{R}_m^{(1)}\times \mathfrak{R}_m^{(2)}:\ |\mathfrak{A}_{G,\epsilon}(l_1,w)|\le 2^{-2\delta m}\},\ \ \mathfrak{H}_{G,\epsilon}:=(\mathfrak{R}_m^{(1)}\times \mathfrak{R}_m^{(2)}) \setminus \mathfrak{L}_{G,\epsilon},$$
where
$$\mathfrak{A}_{G,\epsilon}(l_1,w)=\{x\in [-\frac{1}{2},\frac{1}{2}]:\ |A_1(x)(G\circ \bar{G}^{-1})(x+l_1)+w^{-1}|
\le 2^{-(1/2-2\delta)m}\}.$$
Thus in order to prove (\ref{dis}), it is enough to show
there exist two positive constants $\delta_0'$ and $\delta_0''$ such that
\begin{equation}\label{dis1}
\tilde{R}^L\lesssim\ 2^{(1/2-\delta_0')m}\sum_{w, l_1}|\tilde{C}_{l_1}^{w,v}|^2
\end{equation}
and
\begin{equation}\label{dis2}
\tilde{R}^H\lesssim\ 2^{(1/2-\delta_0'')m}\sum_{w, l_1}|\tilde{C}_{l_1}^{w,v}|^2,
\end{equation}
where
$$\tilde{R}^L:=\sum_{\kk\in\Z}\int_{-\frac{1}{2}}^\frac{1}{2} \chi_{S_l}(j) \varrho(u_{j,k,m}(y_x))
(\sum_{(l_1,w)\in \mathfrak{L}_{G,\epsilon}}
\frac{\tilde{C}_{2^\frac{m}{2}(l_1-\kk)}^{2^{\frac{m}{2}}w,v}
\chi_{\thickapprox-1}(l_1+x)}{
1+2^m\Big(A_1(x)(G\circ \bar{G}^{-1})(l_1+x)+w^{-1}\Big)^2}
)^2
dx,$$
$$\tilde{R}^H:=\sum_{\kk\in\Z}\int_{-\frac{1}{2}}^\frac{1}{2} \chi_{S_l}(j) \varrho(u_{j,k,m}(y_x))
(\sum_{(l_1,w)\in \mathfrak{H}_{G,\epsilon}}
\frac{\tilde{C}_{2^\frac{m}{2}(l_1-\kk)}^{2^{\frac{m}{2}}w,v}
\chi_{\thickapprox-1}(l_1+x)}{
1+2^m\Big(A_1(x)(G\circ \bar{G}^{-1})(l_1+x)+w^{-1}\Big)^2}
)^2
dx.$$
{\bf $\bullet$ The estimate of (\ref{dis1})}\ \hskip.2in
For the estimate of $\tilde{R}^L$, H\"{o}lder's inequality gives
$$
\begin{aligned}
  \tilde{R}^L
\lesssim&\ \sum_{\kk\in\Z}\int_{-\frac{1}{2}}^\frac{1}{2} \chi_{S_l}(j) \varrho(u_{j,k,m}(y_x))
\sum_{(l_1,w)\in \mathfrak{L}_{G,\epsilon}}
\frac{|\tilde{C}_{2^\frac{m}{2}(l_1-\kk)}^{2^{\frac{m}{2}}w,v}|^2
\chi_{\thickapprox-1}(l_1+x)}
{
1+2^m\Big(A_1(x)(G\circ \bar{G}^{-1})(l_1+x)+w^{-1}\Big)^2}\\
&\
\sum_{(l_1',w')\in \mathfrak{L}_{G,\epsilon}}
\frac{
\chi_{\thickapprox-1}(l_1'+x)}
{
1+2^m\Big(A_1(x)(G\circ \bar{G}^{-1})(l_1'+x)+(w')^{-1}\Big)^2}
dx,
\end{aligned}
$$
which, with application of Fubini's theorem, implies the right side equals  a constant times
$$
\begin{aligned}
&\ \sum_{\kk\in\Z}\sum_{(l_1,w)\in \mathfrak{L}_{G,\epsilon}}|\tilde{C}_{2^\frac{m}{2}(l_1-\kk)
}^{2^{\frac{m}{2}}w,v}|^2\int_{-\frac{1}{2}}^\frac{1}{2} \chi_{S_l}(j) \varrho(u_{j,k,m}(y_x))
\frac{
\chi_{\thickapprox-1}(l_1+x)}
{
1+2^m\Big(A_1(x)(G\circ \bar{G}^{-1})(l_1+x)+w^{-1}\Big)^2}\\
&\  \times
\sum_{(l_1',w')\in \mathfrak{L}_{G,\epsilon}}
\frac{
\chi_{\thickapprox-1}(l_1'+x)}
{
1+2^m\Big(A_1(x)(G\circ \bar{G}^{-1})(l_1'+x)+(w')^{-1}\Big)^2}
dx\\
=&\ \sum_{\kk\in\Z}\sum_{(l_1,w)\in \mathfrak{L}_{G,\epsilon}}|\tilde{C}_{2^\frac{m}{2}(l_1-\kk)
}^{2^{\frac{m}{2}}w,v}|^2(\int_{\mathfrak{A}_{G,\epsilon}(l_1,w)}
\cdot+\int_{[-\frac{1}{2},\frac{1}{2}]\setminus
\mathfrak{A}_{G,\epsilon}(l_1,w)}\cdot)
\end{aligned}
$$
Thanks to the definition of $\mathfrak{L}_{G,\epsilon}$,
the first term is majorized by
$$
\begin{aligned}
\lesssim&\ 2^\frac{m}{2} \sum_{\kk\in\Z}\sum_{(l_1,w)\in \mathfrak{L}_{G,\epsilon}}|\tilde{C}_{2^\frac{m}{2}(l_1-\kk)
}^{2^{\frac{m}{2}}w,v}|^2|\mathfrak{A}_{G,\epsilon}(l_1,w)|
\lesssim\ 2^{(1/2-2\delta)m}\sum_{w, l_1}|\tilde{C}_{l_1}^{w,v}|^2,
\end{aligned}
$$
where we have used the definition of $\mathfrak{L}_{G,\epsilon}$ and
\begin{equation}\label{222}
\sum_{w'\in \mathfrak{R}_m^{(2)}}\frac{\chi_{S_l}(j)}{
1+2^m\Big(A_1(x)(G\circ \bar{G}^{-1})(l_1+x)+(w')^{-1}\Big)^2}
\lesssim 1
\end{equation}
for the second inequality and the first inequality, respectively.
For the second term, $x\in [-\frac{1}{2},\frac{1}{2}]\setminus
\mathfrak{A}_{G,\epsilon}(l_1,w)$ yields
 $$1+2^m\Big(A_1(x)(G\circ \bar{G}^{-1})(l_1+x)+w^{-1}\Big)^2\ge 2^{2m\delta}.$$
 As a result,
 $$\sum_{\kk\in\Z}\sum_{(l_1,w)\in \mathfrak{L}_{G,\epsilon}}|\tilde{C}_{2^\frac{m}{2}(l_1-\kk)
}^{2^{\frac{m}{2}}w,v}|^2\int_{[-\frac{1}{2},\frac{1}{2}]\setminus
\mathfrak{A}_{G,\epsilon}(l_1,w)}\cdot
\lesssim\ 2^{(1/2-2\delta)m}\sum_{w, l_1}|\tilde{C}_{l_1}^{w,v}|^2.
$$
This completes the proof of (\ref{dis1}) by setting $\delta_0'=2\delta$.\\
{\bf $\bullet$ The estimate of (\ref{dis2})}\ \hskip.2in
For the estimate of $\tilde{R}^H$, we need Lemma \ref{p1}.
Denote
$$E_1:=\{l_1\in \mathfrak{R}_m^{(1)}:\ \exists\ w\in \mathfrak{R}_m^{(2)},\ s.t.\ (l_1,w)\in \mathfrak{H}_{G,\epsilon}\},$$
applying Lemma \ref{p1} with $B(x,t)=A_1(x)(G\circ \bar{G}^{-1})(t)$, we obtain
\begin{equation}\label{key0}
\sharp E_1\lesssim\ 2^{(\frac{1}{2}-\nu_0)m}
\end{equation}
for a small positive $\nu_0=\nu_0(N)$.
By H\"{o}lder's equality, we have
$$
\begin{aligned}
\tilde{R}^H\lesssim&\ \sum_{\kk\in\Z}\int_{-\frac{1}{2}}^\frac{1}{2} \chi_{S_l}(j) \varrho(u_{j,k,m}(y_x))\\
&\ \times
(\sum_{l_1\in E_1}\sum_{w\in \mathfrak{R}_m^{(2)}}
\frac{\tilde{C}_{2^\frac{m}{2}(l_1-\kk)}^{2^{\frac{m}{2}}w,v}
\chi_{\thickapprox-1}(l_1+x)}{
1+2^m\Big(A_1(x)(G\circ \bar{G}^{-1})(l_1+x)+w^{-1}\Big)^2}
)^2
dx\\
\lesssim&\ \sum_{\kk\in\Z}\sum_{l_1\in E_1}\sum_{w\in \mathfrak{R}_m^{(2)}}|\tilde{C}_{2^\frac{m}{2}(l_1-\kk)
}^{2^{\frac{m}{2}}w,v}|^2\int_{\frac{1}{2}}^\frac{1}{2} \chi_{S_l}(j) \varrho(u_{j,k,m}(y_x))\\
&\ \times
\Big(\sum_{l_1\in E_1}\sum_{w\in \mathfrak{R}_m^{(2)}}
\frac{
\chi_{\thickapprox-1}(l_1+x)}{
1+2^m\Big(A_1(x)(G\circ \bar{G}^{-1})(l_1+x)+w^{-1}\Big)^2}
\Big)dx,
\end{aligned}
$$
which is majorized by a constant multiplied by
$$
\begin{aligned}
 &\ \sharp E_1\sum_{\kk\in\Z}\sum_{l_1\in E_1}\sum_{w\in \mathfrak{R}_m^{(2)}}|\tilde{C}_{2^\frac{m}{2}(l_1-\kk)
}^{2^{\frac{m}{2}}w,v}|^2
\lesssim\ 2^{(1/2-2\nu)m}\sum_{w, l_1}|\tilde{C}_{l_1}^{w,v}|^2
\end{aligned}
$$
via (\ref{222}) and (\ref{key0}).
This completes the proof of (\ref{dis2}) by setting $\delta_0''=2\nu$.
\end{proof}
\subsection{Proof of Lemma \ref{llp} ($L^p$ estimate)}
In this subsection, we prove Lemma \ref{llp}. Denote
$$M_{j,k,m,\De_4}(\xi,\eta):=
\widehat{\Phi}(\frac{\xi}{2^{jl+m}})
\widehat{\Phi}(\frac{\eta}{2^{k}})
\widehat{\Phi}(\frac{u(x)}{2^{j-k+m}})\int e^{i\phi_{j,\xi,\eta,x}(t)}\rho(t)dt,$$
and
$$\bar{T}_{j,k,m}f(x,y):=\int_{\xi,\eta}\widehat{f}(\xi,\eta)
e(\xi x+\eta y) M_{j,k,m,\De_4}(\xi,\eta) d\xi d\eta,$$
then
$$\h_{\De_4,m}f(x,y)=\sum_{j\in S_l}\sum_{k\in\Z} \bar{T}_{j,k,m}f(x,y).$$
By the dual arguments, it suffices to show for all $g\in L^{p'}(\R^2)$,
\begin{equation}\label{dd2}
|\langle\sum_{j\in S_l}\sum_{k\in\Z} \bar{T}_{j,k,m}f,g \rangle|\lesssim\ m^2 \|f\|_p\|g\|_{p'}.
\end{equation}
Rewrite $\bar{T}_{j,k,m}f(x,y)$ as 
$$
\begin{aligned}
 \bar{T}_{j,k,m}f(x,y)=&\
  \int_\R (f*_x \Phi_{jl+m}*_y\Phi_k)
  (x-P(2^{-j}t),y-u(x)2^{-j}t)\rho(t)dt.
\end{aligned}
$$
Recall the definition of $\Psi_k(\cdot)$ in (\ref{dfn1}),
  LHS of (\ref{dd2}) equals
$$
\begin{aligned}
&\ |\langle\sum_{j\in S_l}\sum_{k\in\Z} \bar{T}_{j,k,m}f,
\sum_{|j'-j|\le1}\widehat{\Phi}(\frac{u(x)}{2^{j'-k+m}})\  (g*_y {\Psi}_k) \rangle|\\
\lesssim&\
\|\Big(\sum_{j\in S_l}\sum_{k\in\Z} |\bar{T}_{j,k,m}f|^2\Big)^\frac{1}{2}\|_p
\|\Big(\sum_{j\in S_l}\sum_{k\in\Z}
\sum_{|j'-j|\le1}\widehat{\Phi}(\frac{u(x)}{2^{j'-k+m}})^2
|g*_y {\Psi}_k|^2\Big)^\frac{1}{2}\|_{p'}.
\end{aligned}
$$
Since
$\sum_{j\in S_l}\sum_{|j'-j|\le1}\widehat{\Phi}(\frac{u(x)}{2^{j'-k+m}}
)^2\lesssim1,$
as the previous process, we bound the second term  by a constant times
$\|\Big(\sum_{k\in\Z}
|g*_y \tilde{\Phi}_k|^2\Big)^\frac{1}{2}\|_{p'}$, which is $\lesssim \|g\|_{p'}.$
Using the previous way yielding (\ref{a55}) with $r=0$ and $m=n$, we deduce that the first term  is  $\lesssim m^2\|f\|_p$. This completes the proof of Lemma \ref{llp}.
\vskip.1in
\section{Proof of Lemma \ref{l6.1}}
\label{slemma}
\begin{proof}[Proof of Lemma \ref{l6.1}]
Denote
$W_{\eta,x}(\xi):=\xi2^\frac{m}{2} Q(\frac{p_1\eta_m}{\xi
})+2^{jl+\frac{m}{2}}x\xi+l_1\xi,$
$$I_x(\eta):=\int_\xi e^{iW_{\eta,x}(\xi)}   B^{w,v}(x,p_1,\xi,
  \eta) d\xi,\ \ \mathcal{I}_x(\eta):=\int_{\xi'}  e^{iW_{\eta,x}(\xi')} B^{w,v}(x,p_1,\xi',
  \eta) d\xi', $$
  then
$
\begin{aligned}
\Omega(x)=&\ 2^{2jl+\frac{m}{2}+k} \int_\eta
I_x(\eta) \overline{\mathcal{I}_x(\eta)} e^{i(s-s')\eta} d\eta.
\end{aligned}
  $
 Direct computations lead to
$$W_{\eta,x}'(\xi)=2^\frac{m}{2}
(\bar{G}\circ f)(-\frac{p_1\eta_m}{\xi})+2^{jl+\frac{m}{2}}x+l_1.
$$
  Due to the support of $ B^{w,v}(x,p_1,\xi,
  \eta)$, we have
  $|I_x(\eta)|\lesssim1$
and there exists an absolute constant $\bar{C}>0$ such that
  \begin{equation}\label{gap}
  |W_{\eta,x}'(\xi)-W_{v,x}'(w)|\le \bar{C}.
  \end{equation}
  If $|W_{v,x}'(w)|\ge\ 2\bar{C}$,  we have
  $|W_{\eta,x}'(\xi)|\ge\ \bar{C}.$
  Integrating by parts twice, we obtain
  $$
  \begin{aligned}
  I_x(\eta)=&\ \int_\xi \frac{1}{iW_{\eta,x}'(\xi)}\frac{d}{ d\xi}(e^{iW_{\eta,x}(\xi)})  B^{w,v}(x,p_1,\xi,
  \eta) d\xi\\
  =&\ -\int_\R e^{iW_{\eta,x}(\xi)} \frac{d}{d\xi}\big(\frac{B^{w,v}(x,p_1,\xi,
  \eta)}{iW_{\eta,x}'(\xi)}\big)d\xi\\
  =&\ \int_\R e^{iW_{\eta,x}(\xi)} \frac{\Xi^{w,v}(x,p_1,\xi,\eta)}{(W_{\eta,x}'(\xi))^2}d\xi.
  \end{aligned}
  $$
Here
$\|\Xi^{w,v}(x,p_1,\xi,\cdot)\|_{\mathcal{C}^2}\lesssim1.$
Via Taylor's expansion $\frac{1}{(1-z)^2}=\sum_{k=1}^\infty kz^{k-1}$ for $|z|<1$, it follows
\begin{equation}\label{kp}
\frac{1}{\big(W_{\eta,x}'(\xi)\big)^2}
=\frac{1}{\big(W_{v,x}'(w)\big)^2}
\frac{1}{\big(1-\frac{W_{v,x}'(w)-W_{\eta,x}'(\xi)
}{W_{v,x}'(w)}\big)^2}
=\frac{1}{\big(W_{v,x}'(w)\big)^2}\sum_{k=1}^\infty k \big(\frac{W_{v,x}'(w)-W_{\eta,x}'(\xi)}{W_{v,x}'(w)}
\big)^{k-1}.
\end{equation}
Plugging (\ref{kp}) into $I_x(\eta)$ gives
$$I_x(\eta)=\frac{1}{\big(W_{v,x}'(w)\big)^2}\int_\R e^{iW_{\eta,x}(\xi)}\  \sum_{k=1}^\infty  k\  \Xi^{w,v}_k(x,p_1,\xi,\eta)d\xi,$$
where
$$\Xi^{w,v}_k(x,p_1,\xi,\eta)=\Xi^{w,v}(x,p_1,\xi,\eta)  \big(\frac{W_{v,x}'(w)-W_{\eta,x}'(\xi)}{W_{v,x}'(w)}\big)^{k-1}.$$
It suffices to give the estimate of $\Xi^{w,v}_k(x,p_1,\xi,\eta)$ for $k\ge 1$ individually  since the sufficiently large  lower bound of $|W_{v,x}'(w)|$ and (\ref{gap}) yielding   $\frac{W_{v,x}'(w)-W_{\eta,x}'(\xi)}{W_{v,x}'(w)}$  is small enough can  absorb   the summation of $k$.  We only show $k=1$ since other terms can be treated analogously. 
\vskip.1in
Rewrite $I_x(\eta)$ as
$$I_x(\eta)=\frac{1}{\big(W_{v,x}'(w)\big)^2}\int_\R e^{iW_{\eta,x}(\xi)} \Xi^{w,v}_1(x,p_1,\xi,\eta)d\xi$$
Following the above  arguments line be line yields
$$\mathcal{I}_x(\eta)=\frac{1}{
\big(W_{v,x}'(w')\big)^2}\int_\R e^{iW_{\eta,x}(\xi')} \Xi^{w',v}_1(x,p_1,\xi',\eta)d\xi'$$
when $|W_{v,x}'(w')|\ge\ \bar{C}$.
 Therefore, when $|W_{v,x}'(w)|\ge\ \bar{C}$ and $|W_{v,x}'(w')|\ge\ \bar{C}$ hold, we arrive at
$$
\begin{aligned}
\Omega(x)=&\ 2^{2jl+\frac{m}{2}+k}\frac{1}{\big(W_v'(w')\big)^2}\frac{1}{\big(W_v'(w)\big)^2}
\int_\eta \int_{\xi,\xi'} e^{iW_\eta(\xi)}e^{iW_\eta(\xi')} \\ &\ \Xi^{w,v}_1(x,p_1,\xi,\eta) \Xi^{w',v}_1(x,p_1,\xi',\eta)d\xi d\xi' e^{i(s-s')\eta}d\eta.
\end{aligned}
$$
Denote
$$\Xi^{w,w',v}_2(x,p_1,\xi,\xi',\eta):= \Xi^{w,v}_1(x,p_1,\xi,\eta) \Xi^{w',v}_1(x,p_1,\xi',\eta)$$
and
$$W_{\xi,\xi',s-s_1}(\eta):=
\xi2^\frac{m}{2} Q(\frac{p_1\eta_m}{\xi
})-\xi'2^\frac{m}{2} Q(\frac{p_1\eta_m}{\xi'
}),$$
we have
\begin{equation}\label{FE1}
\begin{aligned}
|\Omega(x)|\lesssim&\ 2^{2jl+\frac{m}{2}+k}\frac{1}{\big(W_v'(w')\big)^2}\frac{1}{\big(W_v'(w)\big)^2}
\int_{\xi,\xi'}|\underbrace{\int_\eta e^{iW_{\xi,\xi',s-s_1}(\eta)}
\Xi^{w,w',v}_2(x,p_1,\xi,\xi',\eta)d\eta}_{Os_1}|d\xi d\xi',
\end{aligned}
\end{equation}
where we have absorbed the linear term on $\xi$ and $\xi'$ in the phase function  by the absolute value in (\ref{FE1}).
We immediately have
$$\frac{d}{d\eta}(W_{\xi,\xi',s-s_1}(\eta))
=\frac{p_1}{2^\frac{m}{2}}f(-\frac{p_1\eta_m}{\xi})
-\frac{p_1}{2^\frac{m}{2}}f(-\frac{p_1\eta_m}{\xi'})+s-s'
=\Theta_{\eta,\xi,\xi',s,s'}(x).
$$
Along the same way yielding (\ref{FE1}), with a trivial estimate
$|Os_1|\lesssim \varrho(\xi-w)
\varrho(\xi'-w')\varrho(\frac{u(x)}{2^{j-k}}-p_1)$
and an application of the process deducing (\ref{kp}),
we have
$$|Os_1|\lesssim \frac{1}{1+|\Theta_{v,\xi,\xi',s,s'}(x)|^2}\varrho(\xi-w)
\varrho(\xi'-w')\varrho(\frac{u(x)}{2^{j-k}}-p_1).$$
Thus, when $|W_{v,x}'(w)|\ge\ C_1$ and $|W_{v,x}'(w')|\ge\ C_1$, we obtain
$$
\begin{aligned}
|\Omega(x)|\lesssim&\ 2^{2jl+\frac{m}{2}+k}
\frac{1}{\big(W_v'(w')\big)^2}\frac{1}{\big(W_v'(w)\big)^2}
\frac{1}{1+|\Theta_{v,w,w',s,s'}(x)|^2}
\varrho(\frac{u(x)}{2^{j-k}}-p_1).
\end{aligned}
$$
As for $|W_{v,x}'(w)|\le\ C_1$ and  $|W_{v,x}'(w')|\le\ C_1$,  we can get the bound with $\frac{1}{\big(W_v'(w')\big)^2}$ and $\frac{1}{\big(W_v'(w)\big)^2}$) replaced by 1, repectively. This ends the proof of Lemma \ref{l6.1}.
\end{proof}
\vskip.2in
\section{The maximal operator $M^\Gamma$ and the related Carleson type operator $\mathcal{C}^\Gamma$}
\label{me}
In this section, we give a remark on the uniform $L^p$ estimates  of $M^\Gamma$ and a related Carleson type operator  $\mathcal{C}^\Gamma$ given by
 $$\mathcal{C}^\Gamma f(x)=p.v.\int_\R f(x-P(t))e^{iu(x) t}\frac{dt}{t},$$
 whose proofs  are similar to that of
$\mathcal{H}^\Gamma$.
\subsection{The uniform $L^p$ estimate of $M^\Gamma$}
Recall the definition of  $M^\Gamma$ given by 
$$M^\Gamma f(x,y)=\sup_{\epsilon>0}\frac{1}{2\epsilon}
\int_{-\epsilon}^\epsilon |f(x-P(t),y-u(x)t)|dt.$$
Utilizing
$$\frac{1}{2\epsilon}\int_{-\epsilon}^\epsilon
|f(x-P(t),y-u(x)t)|dt
=\frac{1}{2}\frac{1}{\epsilon}\int_{-\frac{\epsilon}{2}
}^{\frac{\epsilon}{2}}
|f(x-P(t),y-u(x)t)|dt+
\frac{1}{2\epsilon}\int_{\frac{\epsilon}{2}\le|t|\le \epsilon}
|f(x-P(t),y-u(x)t)|dt,$$
we obtain
$M^\Gamma f(x,y)\le \frac{1}{2}M^\Gamma f(x,y)+C{\bf M}^\Gamma f(x,y),$
where $C$ is a uniform constant and ${\bf M}^\Gamma f(x,y)$ is defined by
$$
{\bf M}^\Gamma f(x,y)=\sup_{j\in\Z}
\int
|f(x-P(t),y-u(x)t)| \theta_j(t) dt.$$
Here $\theta_j(t)=2^j\theta(2^jt)$ in which $\theta$ is defined as in section \ref{s2}.
As a result, it suffices to show that
$\|{\bf M}^\Gamma f\|_p\lesssim_N\|f\|_p,$
where the input function $f$ can be restricted  to be non-negative function. 
 Now, we rewrite ${\bf M}^\Gamma f(x,y)$  by
 Fourier inverse transform, that is,
${\bf M}^\Gamma f(x,y)=\sup_{j\in\Z}\h_jf(x,y),
$
where
$
\h_jf(x,y)=\int_{\xi,\eta}\widehat{f}(\xi,\eta)
e(\xi x+\eta y) M_j(\xi,\eta)d\xi d\eta$
and
$$M_j(\xi,\eta):=\int e(\phi_{j,\xi,\eta,x}(t))\theta(t)dt,\ \phi_{j,\xi,\eta,x}(t):=\xi P(2^{-j}t)+\eta u(x)2^{-j}t.$$
Following the estimate of $\mathcal{H}^\Gamma f(x,y)$ line by line can lead the desired result (In fact, the estimate of $ {\bf M}^\Gamma f(x,y)$ is easier since  it does not have a summation of $j$).
\subsection{The uniform $L^p$ estimate of $\mathcal{C}^\Gamma$}
By linearization,  $\mathcal{C}^\Gamma f$ can also defined by
$$\mathcal{C}^\Gamma f(x)=\sup_{u\in\R}|p.v.\int_\R f(x-P(t))e^{iu t}\frac{dt}{t}|,$$
which can be seen as a variant of the classical Carleson maximal operator, we refer  \cite{Car66,L19,L20,SW01}.
It follows via (\ref{de1}) that
$\mathcal{C}^\Gamma f(x):=\sum_{j\in\Z} \mathcal{C}_jf(x),$
where $\mathcal{C}_jf(x)$ is defined by
$\mathcal{C}_jf(x)=\int f(x-P(t))e^{iu(x) t}\rho_j(t) dt.$
We now rewrite $\mathcal{C}_j f(x)$ as
 $
\mathcal{C}_jf(x)=\int_{\xi}\widehat{f}(\xi)
e(\xi x) M_j(\xi)d\xi$
where
$$M_j(\xi):=\int e(\phi_{j,\xi,x}(t))\rho(t)dt,\ \phi_{j,\xi,x}(t):=\xi P(2^{-j}t)+ u(x)2^{-j}t.$$
Using a new decomposition
$
\sum_{(m,n)\in \Z^2}\widehat{\Phi}(\frac{\xi}{2^{jl+m}})
\widehat{\Phi}(\frac{u(x)}{2^{j+n}})=1,
$
and then executing a similar  process as  yielding the uniform estimate of $\mathcal{H}^\Gamma$,
 we can get the uniform estimate of $\mathcal{C}^\Gamma$ as well.  Actually, without the summation of $k$ in the decomposition,
the proof is easier.
\section*{Acknowledgements}
\vskip .1in
 This work was supported by the NSF of China 11901301.
\vskip.3in
\appendix
\section{}
\label{app}
\begin{lemma}[\cite{L19}]\label{la1}
Let $n,\tilde{N},K\in \N$ with $n,K\le \tilde{N}$. Assume we are given $\{I_l\}_{l=1}^\N$ sets such that  
$I_l\in [-1/2,1/2]$ and $|I_l|\ge K^{-1}$ hold whenever $1\le l\le \tilde{N}$,
Then, if $\tilde{N}\ge 2M^nn^n$, there exists a subset $\mathrm{S}\subset \{1,\cdot\cdot\cdot,\tilde{N}\}$ such that
$\sharp \mathrm{S}=n$ and the measure of $\cap_{l\in \mathrm{S}}I_l$ is greater than $2^{-1}K^{-n}$.
\end{lemma}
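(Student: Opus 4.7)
The plan is to run a moment/pigeonhole argument on the counting function $f(x):=\sum_{l=1}^{\tilde{N}}\chi_{I_l}(x)$ on $[-1/2,1/2]$. Since $[-1/2,1/2]$ carries unit Lebesgue measure, Jensen's inequality applied to the convex power $t\mapsto t^n$ yields
$$\int_{-1/2}^{1/2} f(x)^n\,dx \;\ge\; \Big(\int_{-1/2}^{1/2} f(x)\,dx\Big)^n \;=\; \Big(\sum_{l=1}^{\tilde N}|I_l|\Big)^n \;\ge\; (\tilde N/K)^n.$$
So to exhibit an $n$-subset with large intersection it suffices to upper bound $\int f^n$ in terms of $\max_{|S|=n}|\bigcap_{l\in S} I_l|$.

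Next I would expand
$$\int f^n \;=\; \sum_{(l_1,\dots,l_n)\in[\tilde N]^n}|I_{l_1}\cap\cdots\cap I_{l_n}|$$
and split the range of summation into tuples with pairwise distinct entries and tuples with at least one repeated entry. Assuming for contradiction that $|\bigcap_{l\in S}I_l|\le 2^{-1}K^{-n}$ for every $n$-element subset $S$, the distinct contribution is bounded by $n!\binom{\tilde N}{n}\cdot 2^{-1}K^{-n}\le \tilde N^n/(2K^n)$. For the repeat contribution, a union bound over pairs $(i,j)$ with $l_i=l_j$ produces at most $\binom{n}{2}\tilde N^{n-1}$ such tuples, and each summand is $\le 1$ since every $I_l\subset[-1/2,1/2]$.

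Combining the two estimates gives
$$(\tilde N/K)^n \;\le\; \tilde N^n/(2K^n) + \tfrac12 n^2\,\tilde N^{n-1},$$
which collapses to $\tilde N\le n^2 K^n$. This contradicts the hypothesis $\tilde N\ge 2K^n n^n$ (the natural reading of ``$2M^n n^n$'' with $M=K$) since $2n^n>n^2$ for every $n\ge 1$; the case $n=1$ is in any event trivial as every $I_l$ satisfies $|I_l|\ge 1/K>1/(2K)$. Hence some $n$-subset $S$ must achieve $|\bigcap_{l\in S}I_l|>2^{-1}K^{-n}$, as required.

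The only place where care is needed is the book-keeping for the error term coming from tuples with repeats: a crude upper bound $\tilde N^n$ on the number of all tuples would be too weak, and one must use that when $\tilde N\gg n^2K^n$ almost every ordered $n$-tuple from $[\tilde N]$ has pairwise distinct entries. The quantitative gap between the main term $\tilde N^n/K^n$ and the error $n^2\tilde N^{n-1}$ is precisely what the hypothesis on $\tilde N$ is designed to open, and this is really the only moving part of the argument.
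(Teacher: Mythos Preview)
Your argument is correct. The moment/pigeonhole computation via Jensen's inequality on the counting function $f=\sum_l \chi_{I_l}$, followed by the expansion of $\int f^n$ and the split into distinct versus repeated tuples, goes through exactly as you wrote; the arithmetic $(\tilde N/K)^n \le \tilde N^n/(2K^n) + \tfrac12 n^2 \tilde N^{n-1}$ indeed collapses to $\tilde N \le n^2 K^n$, which contradicts $\tilde N \ge 2K^n n^n$ since $2n^n > n^2$ for all $n\ge 1$. Your identification of the typo $M=K$ is also correct.

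As for comparison with the paper: the paper does not prove this lemma at all. It is stated in the Appendix with a citation to \cite{L19} (Lie) and no argument is given, so there is no ``paper's own proof'' to compare your approach against. Your self-contained proof is therefore a genuine addition; it is in fact the standard $n$th-moment argument one finds for this type of statement, and is essentially the same idea behind the proof in \cite{L19}.
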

Here the interval $[-1/2,1/2]$ can be replaced by $[-\tilde{\mathfrak{C}},\tilde{\mathfrak{C}}]$ for arbitrary uniform  constant $\tilde{\mathfrak{C}}\gtrsim 1$.
\vskip.1in
The shifted maximal operator $M^{[\sigma]}$ 
 is defined by
$$M^{[\sigma]}f (z):=\sup_{z\in I\subset \R}\frac{1}{|I|}
\int_{I^{(\sigma)}}|f(\xi)|d\xi,$$
where $I^{(\sigma)}$ is given by $[a+\sigma |I|,b+\sigma|I|]$ if 
$I=[a,b]$.
\vskip.1in
Next, we introduce the estimate of $M^{[n]}f$  in $L^p$ and the vector-valued estimate of $\{M^{[n]}f_k\}_k$ in $L^p(l^q)$.
\begin{lemma}[\cite{M14,S93}]\label{ssme}
Let $1<p<\infty$,
we have
\begin{equation*}\label{ssme1}
\|M^{[n]}f\|_p
\lesssim\ \log (2+|n|) \|f\|_p.
\end{equation*}
Here the constant hidden in $``\lesssim"$ is independent of $|n|$ and $f$.
\end{lemma}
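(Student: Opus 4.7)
This is a classical logarithmic weak-type result for the shifted Hardy--Littlewood maximal operator; the plan is to obtain it by Marcinkiewicz interpolation between the trivial $L^\infty$ endpoint $\|M^{[n]}f\|_\infty \le \|f\|_\infty$ (with constant $1$ independent of $n$) and a weak-type $(1,1)$ inequality
\[
|\{z : M^{[n]}f(z) > \lambda\}| \lesssim \log(2+|n|)\,\lambda^{-1}\|f\|_1.
\]
Interpolating these two endpoints preserves the logarithmic constant and gives the claimed $L^p$ bound for every $1 < p < \infty$.

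For the weak-type endpoint I would perform a Calder\'on--Zygmund decomposition of $f$ at height $\lambda$: write $f = g + b$, with $b = \sum_k b_k$, each $b_k$ supported on a disjoint interval $Q_k$, $\int b_k = 0$, and satisfying the usual $\|g\|_\infty \lesssim \lambda$, $\sum_k |Q_k| \lesssim \lambda^{-1}\|f\|_1$, $\|g\|_2^2 \lesssim \lambda\|f\|_1$. For the good part $g$, an $L^2$ estimate $\|M^{[n]}g\|_2 \lesssim \|g\|_2$ uniform in $n$ (obtained, for instance, by comparison with the ordinary Hardy--Littlewood maximal operator after absorbing the shift into a translation) yields $|\{M^{[n]}g > \lambda/2\}| \lesssim \lambda^{-1}\|f\|_1$. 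For the bad part, the key observation is that $M^{[n]}b_k(z) > 0$ forces the existence of an interval $I \ni z$ with $I^{(n)} \cap Q_k \ne \varnothing$, which restricts the admissible scales $|I|$ and positions $z$ in terms of $\mathrm{dist}(z, Q_k)$ and $|Q_k|$. A dyadic scale-by-scale analysis combined with the mean-zero cancellation of $b_k$ at large scales produces the $\log(2+|n|)$ factor rather than a polynomial one.

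The main obstacle is recovering the \emph{logarithmic} rather than linear $n$-dependence in the bad-part estimate. A naive bound merely records that $M^{[n]}b_k$ is supported in a shifted neighborhood of $Q_k$ of size $\sim (|n|+1)|Q_k|$, which would yield a factor of $|n|+1$ and, after interpolation, miss the claimed estimate. The logarithmic improvement comes from the observation that for a fixed $z$ at distance $d$ from $Q_k$, only scales $|I|$ in the dyadic window roughly from $d/(|n|+1)$ up to $d$ can cause $I^{(n)}$ to meet $Q_k$, giving a count of $O(\log(2+|n|))$ active scales; the mean-zero property of $b_k$ then converts this scale count into the correct logarithmic factor in the final sum over $k$.
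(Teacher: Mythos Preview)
The paper does not prove this lemma; it is cited from \cite{M14,S93} without argument, so there is no in-paper proof to compare your sketch against.

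Your outline has a concrete gap. The step ``$\|M^{[n]}g\|_2 \lesssim \|g\|_2$ uniformly in $n$, by absorbing the shift into a translation'' fails: the shift $n|I|$ varies with the scale $|I|$, so no single translation commutes with the supremum over $I$. Concretely, for $f=\chi_{[0,1]}$ and any $z\in(-n,0)$ the choice $I=[z,\,z+|z|/n]$ gives $I^{(n)}=[0,|z|/n]\subset[0,1]$, hence $M^{[n]}f\equiv 1$ on $(-n,0)$ and $\|M^{[n]}\chi_{[0,1]}\|_p\ge n^{1/p}$ for every finite $p$. This not only rules out any uniform $L^2$ bound for the good part, it shows that with the appendix's definition (supremum over \emph{all} intervals $I\ni z$) the $L^p$ operator norm is at least $n^{1/p}$, so a bound $\lesssim\log(2+|n|)$ cannot hold as literally stated. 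The logarithmic estimate in \cite{M14,S93} is for the \emph{dyadic} shifted maximal operator (exactly one admissible interval per scale through each point), and the proof there is a direct weak-$(1,1)$ covering argument exploiting that disjoint dyadic intervals of equal length have disjoint $n$-shifts; a Calder\'on--Zygmund decomposition is neither needed nor, as the example above shows, sufficient. Your ``$O(\log(2+|n|))$ active scales'' heuristic is also off: for $z$ at distance $d$ from $Q_k$ (with $z\notin 2Q_k$) the scales $|I|$ with $I\ni z$ and $I^{(n)}\cap Q_k\neq\varnothing$ lie in $[d/(n{+}1),\,(d+|Q_k|)/(n{-}1)]$, a window of bounded ratio, not $[d/(n{+}1),d]$.
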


\begin{lemma}[\cite{GHLJ}]\label{sme}
Let $1<p<\infty$, $1<q\le \infty$,
we have
\begin{equation}\label{sme1}
\Big\|\big(\sum_{k\in\Z}|M^{[n]}f_k|^q\big)^\frac{1}{q}\Big\|_p
\lesssim\ \log^2(2+|n|) \Big\|\big(\sum_{k\in\Z}|f_k|^q\big)^\frac{1}{q}\Big\|_p.
\end{equation}
Here the constant hidden in $``\lesssim"$ is independent of $|n|$ and $f$.
\end{lemma}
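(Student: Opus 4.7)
The plan is to derive the vector-valued estimate \eqref{sme1} from the scalar bound of Lemma \ref{ssme} by combining weighted norm inequalities with the Rubio de Francia extrapolation theorem. The first step would be to upgrade Lemma \ref{ssme} to its weighted counterpart: for every Muckenhoupt weight $w\in A_p$ with $1<p<\infty$,
\[
\|M^{[n]}f\|_{L^p(w)}\ \lesssim\ \log(2+|n|)\,\Phi([w]_{A_p})\,\|f\|_{L^p(w)},
\]
with $\Phi$ an increasing function of $[w]_{A_p}$. This weighted estimate is obtained by revisiting the proof of Lemma \ref{ssme}: one realizes $M^{[n]}$ as the supremum of $O(\log(2+|n|))$ elementary dyadic averages, each of which is controlled by a translation of the classical dyadic Hardy--Littlewood maximal operator, so the standard $A_p$ theory yields the quantitative dependence $\Phi([w]_{A_p})$ at the cost of the single $\log(2+|n|)$ factor already present in Lemma \ref{ssme}.

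Next, I would invoke Rubio de Francia's extrapolation theorem. A sublinear operator bounded on $L^p(w)$ for every $w\in A_p$ with constant depending only on $[w]_{A_p}$ automatically admits an $\ell^q$-valued extension
\[
\Big\|\bigl(\sum_{k}|M^{[n]}f_k|^q\bigr)^{1/q}\Big\|_{L^p}\ \lesssim\ C_{p,q}\,\Big\|\bigl(\sum_{k}|f_k|^q\bigr)^{1/q}\Big\|_{L^p}
\]
for every $1<p,q<\infty$. The constant $C_{p,q}$ inherits the dependence from the scalar weighted bound; an inspection of the Rubio de Francia iteration algorithm (where one builds an $A_p$ weight from $f$ via iterates of the Hardy--Littlewood maximal operator) shows that $C_{p,q}\lesssim \log(2+|n|)\cdot \log(2+|n|)=\log^2(2+|n|)$, the extra logarithmic factor arising from controlling the $A_p$-characteristic of the auxiliary weight in terms of a maximal operator bound that itself incorporates one more $\log(2+|n|)$ loss. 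The endpoint case $q=\infty$ follows directly from the pointwise inequality $\sup_{k}|M^{[n]}f_k|\le M^{[n]}(\sup_{k}|f_k|)$ combined with Lemma \ref{ssme}, producing in fact only a single logarithm, which is absorbed in $\log^2(2+|n|)$.

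The main obstacle I anticipate is the bookkeeping of logarithmic constants in the extrapolation step: one has to verify that the log-growth in the scalar estimate is amplified to exactly $\log^2$ and no worse, which requires a careful quantitative version of Rubio de Francia's algorithm. An alternative and perhaps more transparent route would be to prove a vector-valued weak-$(1,1)$ estimate for $M^{[n]}$ directly, by applying a Calder\'on--Zygmund decomposition to the scalar majorant $(\sum_{k}|f_k|^q)^{1/q}$ and handling the bad part by a covering argument tailored to the shifted averages, then interpolating with the trivial vector-valued $L^\infty$ bound. That route avoids extrapolation but concentrates the technical difficulty into reworking the weak-type analysis of $M^{[n]}$ in the $\ell^q$-valued setting, where the second logarithm would appear from summing the contributions of the $O(\log(2+|n|))$ dyadic scales involved in the shift.
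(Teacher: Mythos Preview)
The paper does not prove this lemma; it is quoted from \cite{GHLJ} without argument. So there is no ``paper's proof'' to compare against, only the original reference. The approach in \cite{GHLJ} is essentially your \emph{alternative} route: one proves a Fefferman--Stein type weighted inequality
\[
\int |M^{[n]}f|^{p}\,w \ \lesssim\ \log(2+|n|)\int |f|^{p}\,M^{[-n]}w,
\]
linearizes the $\ell^q$ norm by duality, and then absorbs the resulting $M^{[-n]}w$ using the scalar $L^{p'}$ bound of Lemma~\ref{ssme}. Each step costs one logarithm, which explains the $\log^{2}(2+|n|)$.

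Your primary route via Rubio de Francia extrapolation is plausible in spirit but the bookkeeping you give for the second logarithm is not correct. In the Rubio de Francia algorithm the auxiliary $A_p$ weight is built from iterates of the \emph{ordinary} Hardy--Littlewood maximal function, not from $M^{[n]}$, so its $A_p$ characteristic is bounded independently of $n$; no extra $\log(2+|n|)$ enters there. Consequently, if you really had the weighted scalar bound $\|M^{[n]}f\|_{L^{p}(w)}\lesssim \log(2+|n|)\,\Phi([w]_{A_p})\|f\|_{L^{p}(w)}$ with $\Phi$ independent of $n$, extrapolation would deliver the vector-valued estimate with a \emph{single} logarithm---stronger than \eqref{sme1}. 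The catch is that this weighted scalar input is not obtained by ``revisiting the proof of Lemma~\ref{ssme}'': that proof is a weak-$(1,1)$ covering argument plus interpolation, which does not automatically transfer to $L^{p}(w)$ with only a $\log(2+|n|)$ loss and $n$-free $\Phi$. You would have to supply a genuinely new argument (e.g.\ sparse domination of $M^{[n]}$ with controlled constants) to justify that step. Absent that, the Fefferman--Stein weighted-inequality route is the one that actually closes, and it naturally produces $\log^{2}(2+|n|)$.
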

\vskip.1in
We need a special case of Theorem 1.1 in \cite{GLY}. Let
$\vec{T}$ be given as
\begin{equation}\label{111}
\vec{T}(F)(y):=\int_\R\vec{K}(y,s)(F(s))ds,
\end{equation}
where the kernel $\vec{K}$ satisfies H\"{o}rmander's conditions, i.e., there exists a positive $C_H$ such that for all $s,z\in \R$,
\begin{equation}\label{112}
\int_{|y-s|>2|s-z|}\|\vec{K}(y,s)-\vec{K}(y,z)\|_{l^2(\Z)\rightarrow l^2(\Z^2)} dy\le C_H,
\end{equation}
and for all $x,w\in\R$,
\begin{equation}\label{113}
\int_{|x-y|>2|x-w|}
\|\vec{K}(x,y)-\vec{K}(w,y)\|_{l^2(\Z)\rightarrow l^2(\Z^2)} dy\le C_H.
\end{equation}
\begin{thm}[\cite{GLY}]\label{a.2}
Assume that $\vec{T}$  defined by (\ref{111}) is a bounded linear operator from $L^r(\R,l^2(\Z))$ to $L^r(\R,l^2(\Z^2))$ for some $r\in (1,\infty)$ with norm $A_r>0$. Assume that $\vec{K}$ satisfies (\ref{112}) and (\ref{113}) for some $C_H>0$. Then $\vec{T}$ has well-defined extensions on $L^p(\R,l^2(\Z))$ for all $p\in (1,\infty)$. Moreover, whenever $1<p<\infty$, for all $F\in L^p(l^2(\Z))$,
$$\|\vec{T}(F)\|_{L^p(l^2(\Z^2))}
\le \max\{p,(p-1)^{-1}\}(C_H+A_r)\|F\|_{L^p(l^2(\Z))}.$$
\end{thm}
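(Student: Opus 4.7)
\medskip

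\noindent\textbf{Proof proposal for Theorem \ref{a.2}.} The plan is to adapt the standard Banach-space valued Calderón--Zygmund machinery to the present setting, where the input space $l^2(\Z)$ and the output space $l^2(\Z^2)$ are distinct Hilbert spaces. The argument will split into three steps: a vector-valued weak-type $(1,1)$ bound for $\vec{T}$, Marcinkiewicz interpolation with the given $L^r$ bound for $1<p\le r$, and a duality step that produces the range $r\le p<\infty$ using the second H\"ormander condition.

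First I would establish the weak-type $(1,1)$ inequality
\[
|\{y\in\R:\ \|\vec{T}F(y)\|_{l^2(\Z^2)}>\alpha\}|\lesssim \frac{C_H+A_r}{\alpha}\,\|F\|_{L^1(l^2(\Z))}
\]
by performing a Calderón--Zygmund decomposition of the scalar function $s\mapsto \|F(s)\|_{l^2(\Z)}$ at height $\alpha$. This yields disjoint cubes $\{Q\}$ with $\sum|Q|\lesssim \|F\|_{L^1(l^2(\Z))}/\alpha$ and a decomposition $F=G+B$, $B=\sum_Q B_Q$, with $B_Q$ supported in $Q$, of mean zero as an $l^2(\Z)$-valued function, and $\|B_Q\|_{L^1(l^2(\Z))}\lesssim \alpha|Q|$; the good function $G$ satisfies $\|G(s)\|_{l^2(\Z)}\lesssim \alpha$ a.e.\ and $\|G\|_{L^1(l^2(\Z))}\le \|F\|_{L^1(l^2(\Z))}$. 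For the good part, Chebyshev together with the hypothesis
\[
\|\vec{T}G\|_{L^r(l^2(\Z^2))}\le A_r\|G\|_{L^r(l^2(\Z))}\lesssim A_r\alpha^{1-1/r}\|F\|_{L^1(l^2(\Z))}^{1/r}
\]
gives the desired weak bound for $\vec{T}G$. For the bad part, outside the doubled cubes $2Q$ I would insert the mean-zero cancellation of $B_Q$ and apply (\ref{112}) centered at the center $c_Q$ of $Q$:
\[
\int_{(2Q)^c}\!\|\vec{T}B_Q(y)\|_{l^2(\Z^2)}\,dy\le \int_Q\|B_Q(s)\|_{l^2(\Z)}\!\int_{(2Q)^c}\!\|\vec K(y,s)-\vec K(y,c_Q)\|_{l^2(\Z)\to l^2(\Z^2)}dy\,ds\lesssim C_H\|B_Q\|_{L^1(l^2(\Z))},
\]
summing over $Q$ and using Chebyshev on $\bigcup 2Q$ plus the outside set.

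Next, Marcinkiewicz interpolation between the weak $(1,1)$ bound above and the given strong $(r,r)$ bound produces $\|\vec{T}F\|_{L^p(l^2(\Z^2))}\le C(p)(C_H+A_r)\|F\|_{L^p(l^2(\Z))}$ for every $1<p\le r$, with constant of the shape $\max\{p,(p-1)^{-1}\}$ times $(C_H+A_r)$. For the remaining range $p>r$ I would dualize. The formal adjoint $\vec{T}^{\ast}$ sends scalar-valued expressions against $l^2(\Z^2)$ inputs to $l^2(\Z)$ outputs, with kernel $\vec{K}^{\ast}(s,y)=\vec{K}(y,s)^{\ast}$; the hypothesis (\ref{113}) is exactly the H\"ormander condition for $\vec{K}^{\ast}$, and the $L^r$ boundedness of $\vec{T}$ gives the $L^{r'}$ boundedness of $\vec{T}^{\ast}$ with the same norm $A_r$. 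Repeating the CZ argument for $\vec{T}^{\ast}$ and interpolating with its $L^{r'}$ bound yields the weak $(1,1)$ and hence strong $L^q$ bounds for $\vec{T}^{\ast}$ in the range $1<q\le r'$; by duality this produces the desired $L^p$ bound for $\vec{T}$ in $r\le p<\infty$, with the symmetric constant $\max\{p,(p-1)^{-1}\}(C_H+A_r)$.

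The main obstacle I foresee is the bookkeeping for the vector-valued CZ decomposition in the mixed-space setting: one must ensure that the Hilbert-space mean-zero cancellation of each atom $B_Q$ interacts correctly with the operator-norm H\"ormander estimate so that only $\|B_Q\|_{L^1(l^2(\Z))}$ (and not some larger norm) appears after integrating out $y$. Once that point is handled via the standard Bochner-integral-with-cancellation trick above, everything else is routine. I do not expect the passage to $p>r$ to be harder than the case $p\le r$, because (\ref{113}) is perfectly symmetric to (\ref{112}) and $l^2(\Z^2)$ is reflexive.
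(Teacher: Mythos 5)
The paper does not actually prove Theorem \ref{a.2}: it is quoted verbatim as a special case of Theorem 1.1 of \cite{GLY}, whose proof is exactly the Banach-valued Calder\'on--Zygmund scheme you describe (vector-valued weak $(1,1)$ from the H\"ormander condition (\ref{112}) applied to the mean-zero atoms, Marcinkiewicz interpolation with the $L^r$ hypothesis for $1<p\le r$, and duality via (\ref{113}) for $p>r$), so your proposal is correct and follows the same route as the cited source. The only points to tidy in a full write-up are the usual normalization $C_H+A_r=1$, so that the factor $A_r^r$ produced by Chebyshev on the good part collapses to the stated linear dependence on $C_H+A_r$, and the choice of dilate: with $s\in Q$ and $z=c_Q$ one needs the complement of a slightly larger multiple of $Q$ (not $2Q$ itself) to guarantee $|y-s|>2|s-c_Q|$ for every $s\in Q$ before invoking (\ref{112}).
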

\vskip.1in

\end{document}